\theoremstyle{definition}
\newtheorem{theorem}{Theorem}
\newtheorem{prop}[theorem]{Proposition}
\newtheorem{lemma}[theorem]{Lemma}
\newtheorem{corollary}[theorem]{Corollary}
\newtheorem{remark}[theorem]{Remark}
\numberwithin{theorem}{section}
\numberwithin{equation}{section}
\newcommand{\bbR}{\mathbb{R}}
\newcommand{\bbZ}{\mathbb{Z}}
\newcommand{\bbC}{\mathbb{C}}
\newcommand{\bbChat}{\hat{\mathbb{C}}}
\newcommand{\bbP}{\mathbb{P}}
\newcommand{\bbH}{\mathbb{H}}
\newcommand{\SLE}{\mbox{SLE}}
\newcommand{\calC}{\mathcal{C}}
\newcommand{\calB}{\mathcal{B}}
\def\Res{\mathop{\rm Res}}
\renewcommand\Im{\mathop{\rm Im}\nolimits}
\renewcommand\Re{\mathop{\rm Re}\nolimits}
\newcommand\proj{\mathop{\rm proj}\nolimits}
 \DeclareMathOperator{\dist}{dist}
\DeclareMathOperator{\id}{id}
\newcommand\pd[1]{\frac{\partial}{\partial #1}}
\newcommand{\eps}{\epsilon}
\newcommand{\struts}[1]{\rule[-#1pt]{0pt}{#1pt*2}}
\begin{document}
\thispagestyle{empty}

\title[Rate of convergence for Cardy's formula]{Rate of convergence for
  Cardy's formula}
\author[D.~Mendelson, A.~Nachmias, and S.S.\!  Watson]{Dana Mendelson, Asaf
  Nachmias, and Samuel S.\!  Watson}

\maketitle

\thispagestyle{empty}

\begin{abstract}
  We show that crossing probabilities in 2D critical site percolation on
  the triangular lattice in a piecewise analytic Jordan domain converge
  with power law rate in the mesh size to their limit given by the
  Cardy-Smirnov formula. We use this result to obtain new upper and lower
  bounds of $e^{O(\sqrt{\log \log R})} R^{-1/3}$ for the probability that
  the cluster at the origin in the half-plane has diameter $R$, improving
  the previously known estimate of $R^{-1/3+o(1)}$.
\end{abstract}

\section{Introduction}
Let $\Omega\subset \bbC$ be a nonempty Jordan domain, and let $A,B,C,D$ be
four points on $\partial \Omega$ ordered counter-clockwise. Let $P^\delta$
denote the critical site percolation measure on the triangular lattice with
mesh size $\delta>0$, that is, each site in the lattice is independently
declared {\em open} or {\em closed} with probability $1/2$ each.  The
Cardy-Smirnov formula \cite{Smirnov} states that as $\delta \to 0$, the
probability $P^\delta(AB\leftrightarrow CD)$ that there exists a path of
open sites in $\Omega$ starting at the arc $AB$ and ending at the arc $CD$
converges to a limit that is a conformal invariant of the four-pointed
domain (see Figure~\ref{fig:crossprob}).  Our main theorem establishes a
power law rate for this convergence under mild regularity hypotheses.

\begin{theorem}
  \label{thm:cardyrate}
  Let $(\Omega,A,B,C,D)$ be a four-pointed Jordan domain bounded by
  finitely many analytic arcs meeting at positive interior angles.  There
  exists $c>0$ such that
\[
P^\delta(AB\leftrightarrow CD) - \lim_{\delta \to
    0}P^\delta(AB\leftrightarrow CD) = O(\delta^c),
\]
where the implied constants depend only on $(\Omega,A,B,C,D)$.
\end{theorem}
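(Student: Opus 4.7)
The plan is to make Smirnov's proof of the Cardy--Smirnov formula quantitative at each step. Recall that Smirnov introduces three real-valued discrete observables $H_\delta^{(i)}$ on the triangular lattice (probabilities of certain separating interfaces), shows they satisfy exact discrete Cauchy--Riemann relations on interior faces, and then passes to the limit via Arzel\`a--Ascoli after conformally mapping $(\Omega,A,B,C)$ to a reference equilateral triangle $T$. The crossing probability is read off as a boundary value of the limiting harmonic function on $\partial T$. To produce a rate I would quantify three separate sources of error: the distortion of the conformal map $\phi:\Omega\to T$; the interior regularity and convergence of $H_\delta^{(i)}$ on $T$; and the behavior near the vertices of $T$ and corners of $\partial\Omega$.

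For the conformal map, the piecewise-analytic/positive-angle hypothesis lets one extend $\phi$ by Schwarz reflection across each analytic arc and apply Warschawski-type estimates at corners to get H\"older regularity of $\phi$ and $\phi^{-1}$ with quantitative exponents; this reduces the problem to a power-law estimate $\|H_\delta^{(i)} - h^{(i)}\|_\infty = O(\delta^c)$ on $T$, where $h^{(i)}$ is the explicit affine/harmonic Cardy function. For the interior and side-of-the-triangle analysis, I would run a classical three-step discrete potential argument: (a) the defining operator of $H_\delta^{(i)}$ differs from the continuum Laplacian on interior faces by $O(\delta^c)$, since the observables are exactly discretely holomorphic and $h^{(i)}$ is analytic in the interior; (b) the discrete boundary data matches the prescribed affine limit on each open side up to $O(\delta^c)$, using RSW crossing estimates and discrete harmonic-measure comparisons; (c) a discrete maximum principle then promotes these two estimates to a uniform bound $\|H_\delta^{(i)}-h^{(i)}\|_\infty = O(\delta^c)$.

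The main obstacle will be the third ingredient: uniform quantitative control near the vertices of $T$, which via $\phi$ correspond to the marked points $A,B,C,D$ and to the corners of $\partial\Omega$. Near such a point $v$ the limit $h^{(i)}$ is only H\"older continuous with an explicit exponent determined by the opening angle, and the discrete observable must be shown to track this singular behavior down to the mesh scale. My plan is to construct explicit discrete barrier functions comparable to $|z-v|^\beta$ in the spirit of Kesten's discrete harmonic-measure estimates, and combine them with RSW-type arm-exponent bounds to sandwich $H_\delta^{(i)}$ within an $O(\delta^c)$-tube of $h^{(i)}$ all the way down to distance $O(\delta)$ from $v$.

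The positive-angle hypothesis is what makes the whole scheme work: it provides, at every corner of $\partial\Omega$ and every marked point, a uniform lower bound on the H\"older exponent $\beta$, which in turn yields a single absolute exponent $c=c(\Omega,A,B,C,D)>0$ in the conclusion. Conversely, the scheme would degenerate at cusps or zero-angle corners, explaining why the hypothesis cannot be dropped without changing the rate.
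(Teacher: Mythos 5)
Your plan rests on a claim that is false and that, in fact, is precisely the obstacle the paper is organized around: the percolation observables are \emph{not} exactly discretely holomorphic. Smirnov's color-switching lemma gives an identity among certain discrete directional increments of $H_1,H_\tau,H_{\tau^2}$, but this does not make $G^\delta = H_1 + \tau H_\tau + \tau^2 H_{\tau^2}$ a discrete holomorphic (or discrete harmonic) function; the discrete $\bar\partial G^\delta$ is only bounded above by five-arm-type probabilities and is not zero. (Contrast the Ising fermionic observable, for which exact discrete Cauchy--Riemann relations hold and one genuinely can pass to discrete second derivatives and discrete harmonicity.) Without an exact discrete relation, step (c) of your outline has no discrete maximum principle to invoke: you cannot sandwich $H_\delta^{(i)}$ between discrete barriers, because $H_\delta^{(i)}$ is not discretely (sub)harmonic. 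To rescue the scheme one must control how the nonzero local deviations from analyticity accumulate across all scales, and the dominant contributions come from faces near $\partial\Omega$ and especially near corners; this is the hard part, and your outline omits it entirely because the false premise makes it invisible.

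What the paper actually does is replace the maximum-principle comparison with a Cauchy--Green integral identity. After composing a boundary-modified observable with $\phi^{-1}$ and Schwarz-reflecting the result on the equilateral triangle $17$ times to a period parallelogram $P$, it applies Stokes' theorem with an elliptic-function kernel $g_w$ (simple poles at one fixed and one varying point, chosen so that the boundary integral vanishes by periodicity). This expresses the deviation $J(w)-w$, up to a constant, as $\int_P \bar\partial J\, g_w\, dA$. The technical content of the proof is then a multi-scale bound on this area integral, combining three-arm whole-plane and five-arm difference events, angle-dependent boundary arm exponents, and Lehman's corner asymptotics for $\phi$, to control the integrand face by face. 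Your instincts about conformal-map regularity from piecewise analyticity with positive angles, about RSW boundary estimates, and about needing barrier-type control at corners all correspond to genuine ingredients in the argument; but the global mechanism that assembles the local bounds must be an integral representation against a kernel, not a discrete maximum principle, precisely because exact discrete harmonicity is unavailable for percolation.
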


\begin{pictures}
\begin{figure}
  \centering
  \includegraphics{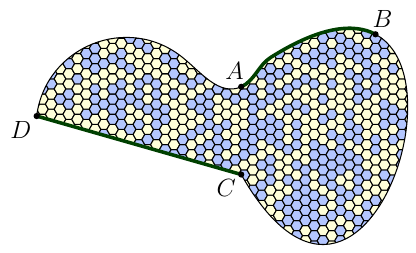}
  \caption{We picture triangular site percolation by coloring the faces of
    the dual hexagonal lattice. Smirnov's theorem states that the probability
    of a yellow crossing from boundary arc $AB$ to boundary arc $CD$
    converges, as the mesh size tends to 0, to a limit which is a conformal
    invariant of the four-pointed domain $(\Omega,A,B,C,D)$.  In the sample
    shown, the yellow crossing event $\{AB\leftrightarrow CD\}$
    occurs.} \label{fig:crossprob}
\end{figure}
\end{pictures}
\noindent We prove Theorem~\ref{thm:cardyrate} for all $c<1/6$, with better exponents
for certain domains (see Remark~\ref{rem:sharp_exponent}).

Schramm posed the problem of improving estimates on percolation arm events
(see Problem 3.1 in \cite{S07}). In Section~\ref{halfarmsection}, we obtain
the following improvement of the estimate found in \cite{SW} for the
probability that the origin is connected to $\{z\,:\,|z|=R\}$ in the upper
half-plane.

\begin{theorem} \label{thm:halfarm} Let $\{0\leftrightarrow S_R\}$ denote
  the event that there exists an open path from the origin to the semicircle
  $S_R$ of radius $R$ in critical site percolation on the triangular
  lattice in the half-plane. Then
  \[ \bbP(0\leftrightarrow S_R) = e^{O(\sqrt{\log\log R})}  R^{-1/3} = (\log
  R)^{O(1/\sqrt{\log \log R})}R^{-1/3}.\]
\end{theorem}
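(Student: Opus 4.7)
To prove Theorem~\ref{thm:halfarm}, I would combine Theorem~\ref{thm:cardyrate} with the multi-scale structure of half-plane arm events. For any ratio $L\geq 2$ and integer $k$ with $L^k=R$, standard quasi-multiplicativity for half-plane arm events (via FKG, BK, and Russo--Seymour--Welsh gluing; see Nolin's survey) gives
\[
\bbP(0\leftrightarrow S_R) \asymp \prod_{j=0}^{k-1}\phi(L^j,L^{j+1}),
\]
where $\phi(r_1,r_2)$ is the probability of an open path between the inner and outer semicircular arcs of the half-annulus $\{z\in\bbH : r_1<|z|<r_2\}$, and the implicit constant has the form $C^{\pm k}$ for a universal $C\geq 1$.

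Each such half-annulus is a Jordan domain bounded by four analytic arcs meeting at right angles, so Theorem~\ref{thm:cardyrate} applies. Rescaling the $j$-th annulus to unit size makes the lattice mesh equal to $L^{-(j+1)}$, and the theorem yields
\[
\phi(L^j,L^{j+1}) = \beta(L) + O\!\bigl(L^{-(j+1)c}\bigr),
\]
where $\beta(L)$ is the continuum Cardy--Smirnov crossing probability for the ratio-$L$ half-annulus. Mapping $H(1,L)$ to a rectangle via $z\mapsto \log z$ and applying Cardy's explicit hypergeometric formula, one verifies the asymptotic $\beta(L)=c_0 L^{-1/3}(1+o(1))$ as $L\to\infty$, for some explicit $c_0>0$ that emerges from elliptic-integral expansions for the conformal modulus of a long rectangle. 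Consequently $\beta(L)^k=c_0^k R^{-1/3}(1+o(1))$.

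Combining the two steps, $\bbP(0\leftrightarrow S_R) \asymp C^{\pm k}\, c_0^k\, R^{-1/3}\cdot (\text{error})$, with the dominant logarithmic error contributions, namely $\pm k\log C$ and $k\log c_0$, both of order $k$. Choosing $k=\lceil\sqrt{\log\log R}\rceil$ and $L=R^{1/k}$ makes these factors equal to $e^{O(\sqrt{\log\log R})}$, while the enormous value $L=R^{1/\sqrt{\log\log R}}$ drives the absolute Cardy remainder $L^{-c}$ at each scale far below $R^{-1/3}$. This yields $\bbP(0\leftrightarrow S_R)=e^{O(\sqrt{\log\log R})}R^{-1/3}$.

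The main obstacle I anticipate is controlling the accumulated Cardy-rate error at the innermost scales, where the \emph{relative} error $L^{-(j+1)c}/\beta(L)\sim L^{1/3-(j+1)c}$ exceeds $1$ for small $j$ whenever $c<1/3$; this is precisely the regime Theorem~\ref{thm:cardyrate} is weakest in, since it is stated only for $c<1/6$. I would handle this by treating the innermost $O(1/c)$ scales with a separate \emph{a priori} estimate---combining uniform RSW-type lower bounds with the existing bound $\bbP\leq R^{-1/3+o(1)}$ from \cite{LSW}, possibly in a bootstrapping scheme---and carefully arranging that the resulting losses fit inside the $e^{O(\sqrt{\log\log R})}$ budget. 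A secondary issue is making the Cardy asymptotic for $\beta(L)$ quantitative, with a power-rate correction $O(L^{-\gamma})$, so that its accumulation across scales contributes only to the lower-order error.
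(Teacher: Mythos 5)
Your high-level plan shares the paper's skeleton: telescope the arm event over $\sim\sqrt{\log\log R}$ concentric half-annuli, apply a quantitative Cardy--Smirnov estimate to each, estimate the continuum crossing probability by $\asymp (r/R)^{1/3}$, and absorb the per-scale constants into the $e^{O(\sqrt{\log\log R})}$ budget. However, the specific scale decomposition you chose is fatally different from the paper's, and the gap you flag at the end is not repairable by the patch you propose.

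You take a \emph{constant} ratio $L = R^{1/k}$ with $k\asymp\sqrt{\log\log R}$, so the $j$-th annulus runs from $L^j$ to $L^{j+1}$. You correctly observe that for the innermost $\sim 1/(3c)$ scales the Cardy-rate error $L^{-(j+1)c}$ overwhelms $\beta(L)\asymp L^{-1/3}$, so Theorem~\ref{thm:cardyrate} is vacuous there. But the fix of handling those scales ``separately with a priori estimates'' cannot stay inside the budget: those innermost $O(1)$ scales cover the range $[1, L^{O(1/c)}]=[1, R^{\,\Theta(1/\sqrt{\log\log R})}]$. Dropping them (for the upper bound) costs a factor of $L^{O(1)/3}=e^{\Theta(\log R/\sqrt{\log\log R})}$, which vastly exceeds the permitted $e^{O(\sqrt{\log\log R})}$; and for the lower bound, the LSW estimate $R'^{-1/3+o(1)}$ has an \emph{unquantified} $o(1)$ in the exponent, so applying it at $R'=R^{\Theta(1/\sqrt{\log\log R})}$ gives an error that cannot be checked to fit the budget --- indeed it is exactly the quantity you are trying to improve, so the argument is circular.

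The paper avoids this entirely by choosing a \emph{geometrically growing} ladder $R_k=R_0^{\alpha^k}$ with $R_0 = e^{\sqrt{\log\log R}}$ and, crucially, $\alpha = 1/(1-3c)$. This choice is tuned so that the relative error at each scale,
\[
\frac{R_{k+1}^{-c}}{(R_{k+1}/R_k)^{-1/3}} = R_{k+1}^{1/3-c}R_k^{-1/3} = R_0^{\alpha^k\left(\alpha(1/3-c)-1/3\right)} = R_0^{\,0} = 1,
\]
is \emph{constant in $k$} --- there are no problematic inner scales at all. Since $\log R_k=\alpha^k\sqrt{\log\log R}$, the number of scales is $n\asymp\log\log R/\log R_0=\sqrt{\log\log R}$, and the base of the telescope $R_0$ is only $e^{\sqrt{\log\log R}}$, so dropping the innermost ball costs only $R_0^{1/3}$, which fits the budget. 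The lower bound at scale $R_0$ then needs only the crude $P(0\leftrightarrow S_{R_0})\gtrsim R_0^{-1/2}$, valid for $R_0$ large, which follows from LSW without any quantification of the $o(1)$.

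Two further remarks. First, you need the implied constant in the per-annulus Cardy estimate to be \emph{uniform} over the family of half-annuli as $r/R\to 0$; Theorem~\ref{thm:cardyrate}'s constants depend on the domain, so this is a genuine additional ingredient, which the paper supplies as Proposition~\ref{prop:uniform_constant}. Second, your route to the continuum asymptotic $\beta(L)=c_0L^{-1/3}(1+o(1))$ via the hypergeometric Cardy formula is more than is needed; the paper proves only a two-sided bound $a_1\le \phi_{r,R}(r)/(r/R)^{1/3}\le a_2$ (Lemma~\ref{lem:confmap}) by composing explicit conformal maps, which is both more elementary and, since the ladder ratios $R_{k+1}/R_k$ vary, necessary.
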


Our methods also yield the estimate $e^{O(\sqrt{\log\log R})}
R^{-1/6\beta}$ for the probability that the origin is connected to
$\{z\,:\,|z|=R\}$ in the sector centered at the origin of angle
$2\pi\beta$. We remark that our methods are insufficient to give better
estimates for the probability that the origin is connected to
$\{z\,:\,|z|=R\}$ in the full plane (the so-called {\em one-arm} exponent,
which takes the value $5/48$, \cite{LSW}) and multiple arm events either in
the full or half plane.

In his proof of Cardy's formula, Smirnov constructs a discrete observable
$G_\delta:\Omega^\delta\to \bbC$, defined as a complex linear combination
of crossing probabilities, and shows that $G_\delta$ converges as
$\delta\to 0$ to a conformal map. The crossing probabilities and their
limits can be then read off $G_\delta$ and its limit. A similar high-level
strategy was also used by Smirnov \cite{Smirnov2} and Chelkak and Smirnov
\cite{CS} to show that the interfaces of the critical Ising and FK-Ising
model converge to SLE curves. See \cite{DCS} for a comprehensive survey of
this subject.

We note that the power law rate of convergence is obtained for the FK-Ising
model (\cite{Smirnov2, HS}) more directly than for percolation, because
the combinatorial relations in the Ising model establish that ``discrete
Cauchy-Riemann'' equations hold precisely. In particular, in the case of
the Ising model one can work with discrete second derivatives and obtain
discrete harmonic functions. By contrast, for percolation the observable
$G_\delta$ is only known to be approximately analytic. Thus it is necessary
to control the global effects of these local deviations from exact
analyticity. To accomplish this, we use a Cauchy integral formula with an
elliptic function kernel in place of the usual $z\mapsto 1/z$.

The half-plane arm exponent, as well as the validity of Smirnov's theorem
is widely believed to be universal in the sense that it should hold for any
reasonable two-dimensional lattice. Nevertheless, so far it is an open
problem to prove Smirnov's theorem even for the case of bond
  percolation on the square lattice.  The value of the exponent does,
however, depend on the dimension. For example, in high dimensions (that is,
dimension at least $19$ in the usual nearest-neighbor lattice, or dimension
at least $6$ on lattices which are spread-out enough) its value is $-3$
\cite{KN}. To the best of our knowledge, there are no predictions in
dimensions $3,4,5$. As for the error terms, in dimension 2 it is believed
that the correct bound for $\bbP(0\leftrightarrow S_R)$ of Theorem
\ref{thm:halfarm} is $\Theta(R^{-1/3})$ (we are unable to prove this
here). In general, it is believed that the polynomial decay should have no
logarithmic corrections
except for at dimension $6$, the upper critical dimension (see \cite{SA}). \\

Finally, we remark that Theorem \ref{thm:cardyrate} has been
independently proved by Binder, Chayes, and Lei \cite{BCL12} using
  different methods.  Their approach applies to arbitrary simply connected
  domains, while our proof achieves explicit exponents for the subclass of
  piecewise analytic domains (see Remark~\ref{rem:sharp_exponent}).


\section*{Acknowledgements}
We thank Vincent Beffara, Gady Kozma and
Steffen Rohde, and Scott Sheffield for helpful discussions. We specifically
thank Scott for suggesting the idea to use elliptic functions in the proof
of Theorem~\ref{thm:general} and for his help with the proof of Proposition \ref{prop:cont_exp}.

D.M.~was partially supported by the NSERC Postgraduate Scholarships
Program. A.N.~was supported by NSF grant \#6923910 and NSERC
grant. S.S.W.~was supported by NSF Graduate Research Fellowship Program,
award number 1122374.

\section{Set-up and notation} \label{sec:setup}

Throughout the paper, we consider piecewise analytic Jordan domains
$\Omega$ with positive interior angles. That is, $\partial \Omega$ is a
Jordan curve which can be written as the concatenation of finitely many
analytic arcs $\gamma_1,\ldots,\gamma_N$.  Recall that an arc is said to be
analytic if it can be realized as the image of a closed subinterval
$I\subset \bbR$ under a real-analytic function from $I$ to $\bbC$.  We will
call the point at which two such arcs meet a corner, and we will denote the
collection of corners by $\{x_j\}_{j=1, \ldots, N}$. Our hypotheses imply
that there is a well-defined interior angle at each corner, and we impose
the condition that each such angle lies in $(0,2\pi]$. We define $\tau
\colonequals \exp(2\pi i /3)$ and let $\Omega$ have three marked boundary
points, labeled $x(1)$, $x(\tau)$, and $x(\tau^2)$ in counter-clockwise
order. We denote the angles at marked points by $2\pi\alpha_j$ and those at
unmarked points by $2\pi \beta_i$.

Denote by $\Omega^\delta$ the sites of the triangular lattice with mesh
size $\delta$ which are contained in $\Omega$ or have a neighbor contained
in $\Omega$ and consider critical site percolation on $\Omega^\delta$. Let
$(\Omega^\delta)^*$ be the sites of the hexagonal lattice dual to
$\Omega^\delta$ (that is, $(\Omega^\delta)^*$ are the centers of the
triangles of $\Omega^\delta$). We depict open and closed sites by coloring
the corresponding hexagonal faces yellow and blue, respectively. For
$z,z'\in \partial \Omega$, let $[z,z']$ denote the counter-clockwise
boundary arc from $z$ to $z'$. As in \cite{Smirnov}, the following events
play a central role (see Figure~\ref{defH}):
\[
E^\delta_{\tau^k} (z) =
\left\{
\begin{array}{c}
\exists \text{ a simple open path from } [x(\tau^{k+2}),x(\tau^k)] \text{
  to }[x(\tau^k),x(\tau^{k+1})] \\ \text{ separating }  z \text{ from } [x(\tau^{k+1}),x(\tau^{k+2})]
\end{array}
\right\} \, ,
\]
for $k\in\{0,1,2\}$. Let $H_{\tau^k}^\delta = \bbP(E_{\tau^k}^\delta)$ and
for $z$ and $z+\eta$ neighbors in $(\Omega^\delta)^*$, define
$P^\delta_{\tau^k}(z,\eta)$~$=\bbP(E_{\tau^k}^\delta(z+\eta)\setminus
E_{\tau^k}^\delta(z))$. Following \cite{Beffara}, we define
\[
G^\delta \colonequals H^\delta_1 + \tau H^\delta_\tau + \tau^2 H^\delta_{\tau^2},  \qquad S^\delta \colonequals H^\delta_1 + H^\delta_\tau +
H^\delta_{\tau^2}.
\]

\begin{pictures}
\begin{figure}[h]
\begin{center}
\includegraphics{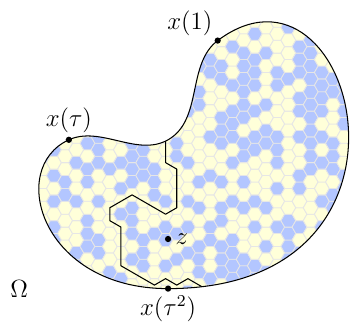}
\caption{The event $E^\delta_{1}(z)$ occurs when there exists a simple open path
  separating $z$ from $[x(\tau),x(\tau^2)]$.}
\label{defH} 
\end{center}
\end{figure}
\end{pictures}

We extend the domain of $G^\delta$ from the lattice $(\Omega^\delta)^*$ to
all of $\Omega$ by triangulating each hexagonal face and linearly
interpolating in each resulting triangle. The possible triangulations
  for each face are \includegraphics[height=10pt]{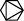}
  and \includegraphics[height=10pt]{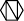} and rotations
  thereof. We will see that the choice of triangulation is immaterial. We
obtain Theorem~\ref{thm:cardyrate} as a corollary of the following theorem.

\begin{theorem} \label{thm:general} Let $(\Omega,x(1),x(\tau),x(\tau^2))$
  be a three-pointed, simply connected Jordan domain bounded by finitely
  many analytic arcs meeting at positive interior angles, and let $T$
    be the triangular domain with vertices $1,\tau,$ and $\tau^2$. Then
  there exists $c>0$ so that $|G^\delta(z) -\phi(z)| =O(\delta^c)$, where
  $\phi$ is the conformal map from $(\Omega,x(1),x(\tau),x(\tau^2))$ to
  $(T,1,\tau,\tau^2)$, and where the implied constants depend only on the
  three-pointed domain.
\end{theorem}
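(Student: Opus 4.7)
The plan is to follow Smirnov's scheme for Cardy's formula but to make each step quantitative in $\delta$. Smirnov's combinatorial color-swapping identities between the quantities $P_{\tau^k}(z,\eta)$ show that $G^\delta$ satisfies approximate discrete Cauchy--Riemann equations on $(\Omega^\delta)^*$. The first step is to quantify the approximation: the pointwise error in the discrete $\bar\partial$ of $G^\delta$ at a hexagonal face should be bounded by a power of $\delta$ times the probability of a local polychromatic arm event around that face. Using quantitative RSW estimates together with the known polynomial lower bound on the exponent of the relevant polychromatic multi-arm event, this pointwise error estimate can be made effective at scale $\delta$ uniformly in the bulk.

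The second step is to propagate the local approximate holomorphicity into a global $O(\delta^c)$ bound on $G^\delta-\phi$. The naive approach via the Cauchy--Pompeiu formula with kernel $1/(w-z)$ fails because, while each local error in $\bar\partial G^\delta$ is small, summing them against the $1/(w-z)$ singularity produces a logarithmic blowup that destroys any power-law rate. As noted in the introduction, we instead use a Cauchy integral formula built from an elliptic kernel - roughly a Weierstrass $\zeta$-type function with periods comparable to a mesoscopic scale - which agrees with $1/(w-z)$ near $z$ but is doubly periodic, so that the contributions of the local errors in each fundamental cell cancel to leading order against the lattice symmetries. This trades a pointwise bad singularity for an averaged cancellation and yields a uniform interior bound in terms of (i) boundary values of $G^\delta$ and (ii) the summed discrete $\bar\partial$ errors.

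Next, to pin the approximant down to the conformal map $\phi:(\Omega,x(1),x(\tau),x(\tau^2))\to(T,1,\tau,\tau^2)$, I would establish quantitative boundary conditions: on the arc $[x(\tau^k),x(\tau^{k+1})]$, the definition of $H^\delta_{\tau^{k+2}}$ via separating paths forces $G^\delta$ to lie within $O(\delta^c)$ of the side of $T$ joining $\tau^k$ to $\tau^{k+1}$, once one verifies that the two relevant crossing events occur with complementary probabilities up to a polynomial error (again via quantitative RSW applied to arms hitting the boundary). Combined with the interior integral representation from step two, and the uniqueness of holomorphic functions with the given boundary conditions, this identifies the interior limit with $\phi$ and yields the desired rate.

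The main technical obstacle, and the source of the exponent restriction $c<1/6$, will be the neighborhoods of the corners of $\partial\Omega$. There, two difficulties combine: the discrete boundary approximation is worst, and $\phi$ itself has power-type singularities of exponent depending on the interior angle $2\pi\alpha_j$ or $2\pi\beta_i$. Controlling these regions seems to require excising a $\delta^a$-neighborhood of each corner from the elliptic Cauchy representation, bounding the excised piece via the explicit asymptotics of $\phi$ and of $G^\delta$ under a local uniformizing change of coordinates that straightens the corner, and then optimizing the excision scale $a$ against the rates obtained in the bulk. The interplay of these scales is what forces the particular exponent bound.
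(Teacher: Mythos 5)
Your proposal correctly identifies the main ingredients that appear in the introduction --- approximate discrete Cauchy--Riemann relations from color switching, a Cauchy-type integral formula with an elliptic kernel, quantitative boundary estimates, and special handling of corners --- but the mechanism you describe for the elliptic kernel is not the one that actually makes the argument close, and you are missing the structural trick that does. You say the kernel has ``periods comparable to a mesoscopic scale,'' so that ``the contributions of the local errors in each fundamental cell cancel to leading order against the lattice symmetries.'' This is not what happens, and it is not clear it could be made to work. In the paper the elliptic function $g_w$ has a \emph{fixed}, $\delta$-independent period parallelogram $P$, namely a union of $18$ copies of the target triangle $T$. The step you omit is the conformal change of variables: one does not work with $G^\delta - \phi$ on $\Omega$ directly, but rather with $J = \tilde{G}^\delta \circ \phi^{-1}$ on $T$, where $\tilde{G}^\delta$ is $G^\delta$ modified so that boundary lattice points are projected onto $\partial T$. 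Because $J$ then maps sides of $T$ to sides of $T$ and vertices to vertices, the function $J - \id$ extends by Schwarz reflection to a continuous, doubly periodic function on $P$. It is this periodicity of $J - \id$ together with the periodicity of the kernel $g_w$ that makes the boundary integral in the Stokes/Cauchy--Pompeiu computation vanish identically. Without this reflection-to-a-torus step, there is nothing to kill the boundary term, and ``lattice symmetry cancellation'' at a mesoscopic scale does not substitute for it. This is a genuine gap at the heart of your step two. (A smaller point: a Weierstrass $\zeta$-function is only quasi-periodic; the paper builds a true elliptic function with two simple poles, at a fixed point $w_0$ and at the variable point $w$, since an elliptic function cannot have just one.)

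The remainder of your outline is closer in spirit to the paper, with some deviations. Once the Stokes computation gives that $J(w) - w$ agrees with a $w$-independent constant up to $O(\delta^c)$, the paper pins down the constant simply by evaluating at a vertex of $T$ where $J - \id$ vanishes exactly --- not by a ``uniqueness of holomorphic functions with given boundary conditions'' argument as you suggest. Your corner-handling proposal (excise $\delta^a$-neighborhoods of each corner and optimize $a$) is a plausible alternative, but the paper instead integrates all the way to the boundary, splitting into an edge region $T_{\text{edge}}$ and the bulk and summing multi-arm event estimates over lattice faces indexed by their distances to the corner and to $\partial\Omega$, using the explicit asymptotics of $\phi$ and $(\phi^{-1})'$ near a corner from conformal mapping theory; it is these sums that produce the exponent bound $c < \min_{i,j}(2/3, 1/6\alpha_i, 1/2\beta_j)$ rather than an excision-scale optimization. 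The corner treatment is thus a different but defensible route; the missing reflection step is the substantive problem.
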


\begin{remark} \label{rem:sharp_exponent} Our methods establish
  Theorem~\ref{thm:general} (and thus Theorem~\ref{thm:cardyrate}) for any
  exponent
  \begin{equation} \label{eq:bounds}
  c<\min_{i,j}\left(\frac{2}{3},\frac{1}{6\alpha_i},\frac{1}{2\beta_j}\right).
  \end{equation}
  These exponents are essentially the best possible given our approach,
  because no piecewise-linear interpolant of a function on a lattice of
  mesh $\delta$ can approximate the conformal map to $T$ with error better
  than $\delta^{\min_{i,j}(1/6\alpha_i,1/2\beta_j)}$ due to behavior near
  the boundary. 
\end{remark}

\begin{remark}
  Our proof of Theorem~\ref{thm:cardyrate} uses results whose proofs
  require SLE tools, but only for two purposes: (1) to handle the case
  where the domain contains reflex angles (that is, some interior angle
  formed at the intersection of two of the bounding analytic arcs is
  greater than $\pi$), and (2) to obtain the sharp exponent discussed in
  Remark~\ref{rem:sharp_exponent}. Without SLE machinery, we obtain
  Theorem~\ref{thm:cardyrate} for domains without reflex angles and for
  exponents $c<\min_{i,j}(c_3,1/6\alpha_i,1/6\beta_j)$, where $c_3$ is the
  three-arm whole-plane exponent (which is known to be 2/3, but only by
  using an $\SLE$ convergence result). See Remark~\ref{rem:sle_free} for
  further discussion of this point.
\end{remark}

\begin{remark}
  In \cite{SW}, a bound of $R^{-1/3+o(1)}$ for the half-plane arm exponent
  was proved using SLE calculations and the fact that the percolation
  exploration path converges to SLE$_6$ as proved by Smirnov \cite{Smirnov}
  and Camia-Newman \cite{CN}. By contrast, our proof follows from
  Proposition~\ref{prop:uniform_constant}, which is a variation of
  Theorem~\ref{thm:cardyrate} proved by similar methods. The only SLE
  result on which our proof of Theorem~\ref{thm:halfarm} depends is the
  statement $c_3>1/3$, where $c_3$ is the three-arm whole-plane exponent.
\end{remark}

For two quantities $f(\delta)$ and $g(\delta)$, we use the usual asymptotic
notation $f=O(g)$ to mean that there exist constants $C$ and $\delta_0>0$
so that $|f(\delta)|\leq C|g(\delta)|$ for all $0<\delta<\delta_0$. We use
the notation $f\lesssim g$ to mean $f=O(g)$ as $\delta \to 0$, and we write
$f\asymp g$ to mean $f=O(g)$ and $g=O(f)$. We sometimes use $C$ to denote
an arbitrary constant.

\section{Preliminaries} \label{sect:prelim}

First we recall some results from \cite{Smirnov}. The first is a H\"older norm estimate of $H_{\tau^k}$ and is obtained via Russo-Seymour-Welsh estimates.

\begin{lemma}[Lemma 2.2 in \cite{Smirnov}] \label{lem:Holder}
  There exist $C,c>0$ depending only on $\Omega$ such that for all
  $\delta>0$, the $c$-H\"older norm of $H_{\tau^k}^{\delta}$ is bounded above by
  $C$. That is,
\begin{equation} \label{HolderProp}
|H^{\delta}_{\tau^k}(z)-H^{\delta}_{\tau^k}(z')| \leq C |z-z'|^c,
\end{equation}
for ${\tau^k} \in \{1,\tau,\tau^2\}$.
\end{lemma}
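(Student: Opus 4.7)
The plan is to reduce the H\"older bound to a polychromatic arm-event estimate via Russo--Seymour--Welsh. Writing $r = |z-z'|$, the natural starting point is
\[
|H_{\tau^k}(z) - H_{\tau^k}(z')| \leq \bbP\bigl(E_{\tau^k}^\delta(z) \triangle E_{\tau^k}^\delta(z')\bigr),
\]
so it suffices to bound the symmetric difference probability by $|z-z'|^c$.

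The key geometric step is that on the symmetric difference event, there must be a yellow (open) path and a blue (closed dual) path passing between $z$ and $z'$ all the way out to a macroscopic scale $\rho>0$ depending only on $\Omega$. Indeed, suppose without loss of generality that $E_{\tau^k}^\delta(z')$ occurs but $E_{\tau^k}^\delta(z)$ does not. Any simple open path $\gamma$ witnessing $E_{\tau^k}^\delta(z')$ separates $\Omega$ into two components, one containing $[x(\tau^{k+1}),x(\tau^{k+2})]$ and one not; the hypotheses force $z'$ into the latter and $z$ into the former, so $\gamma$ crosses the segment $[z,z']$. This yields a yellow arm from the boundary arcs $[x(\tau^{k+2}),x(\tau^k)]$ and $[x(\tau^k),x(\tau^{k+1})]$ into a small neighborhood of $[z,z']$. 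The failure of $E_{\tau^k}^\delta(z)$, together with standard duality for site percolation on the triangular lattice, produces a dual blue arm from $[x(\tau^{k+1}),x(\tau^{k+2})]$ to the same neighborhood. Combining these gives a polychromatic alternating arm event in an annulus centered near the midpoint of $[z,z']$: four arms in the bulk, three arms at a smooth boundary point, or a wedge version at a corner of interior angle $2\pi\beta$.

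The final step applies RSW, which gives that the polychromatic $k$-arm event with alternating colors in an annulus of radii $r<R$ has probability at most $C(r/R)^{c_k}$ for some universal exponent $c_k>0$, uniformly in a whole-plane, half-plane, or wedge setup provided the opening is strictly positive. Applied with $R=\rho$ this produces
\[
\bbP\bigl(E_{\tau^k}^\delta(z) \triangle E_{\tau^k}^\delta(z')\bigr) \lesssim r^c,
\]
completing the H\"older estimate with a constant depending only on $\Omega$.

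The main obstacle will be the uniform treatment of corners with arbitrary positive opening angles: as the opening of a wedge shrinks, the multi-arm exponents in the wedge degenerate. The hypothesis that $\Omega$ has finitely many corners each with strictly positive interior angle is essential here. In practice one handles each corner separately by locally conformally straightening the two analytic arcs meeting at the corner (bi-Lipschitz on compact pieces away from the corner by analyticity) so that wedge RSW can be invoked, and then patches the local estimates together to produce a single exponent $c>0$ valid on all of $\Omega$.
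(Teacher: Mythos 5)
The paper does not prove this lemma; it is quoted verbatim from Smirnov (Lemma 2.2 there) with only the one-line remark that it ``is obtained via Russo-Seymour-Welsh estimates.'' Your proposal reconstructs exactly that RSW argument, so the approach is the same as the one the paper points to. Two small imprecisions are worth flagging. First, the symmetric difference $E_{\tau^k}^\delta(z)\triangle E_{\tau^k}^\delta(z')$ is a three-arm event in the bulk, not four: the separating open path contributes two yellow arms and duality contributes one blue arm from the $z$-side to $[x(\tau^{k+1}),x(\tau^{k+2})]$ --- this is precisely the picture the paper itself records in Figure~\ref{fig:threearm} for adjacent $z$, $z+\eta$. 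In fact for the H\"older bound one only needs that the symmetric difference forces a single crossing of the annulus $A(z,2|z-z'|,\rho)$, and RSW already gives a positive one-arm exponent, so the higher-arm bookkeeping is unnecessary overhead (though harmless). Second, you argue that the witnessing path $\gamma$ must cross the Euclidean segment $[z,z']$; for a nonconvex $\Omega$ that segment need not lie in $\Omega$. The standard fix is to center the annulus at $z$ with inner radius $2|z-z'|$: since $z'\in B(z,2|z-z'|)$ and the relevant boundary arcs lie outside $B(z,\rho)$, the separating path must cross the annulus regardless of whether $[z,z']\subset\Omega$. With these cosmetic repairs your argument is sound and is essentially Smirnov's.
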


\begin{pictures}
\begin{figure}
  \centering
  \includegraphics{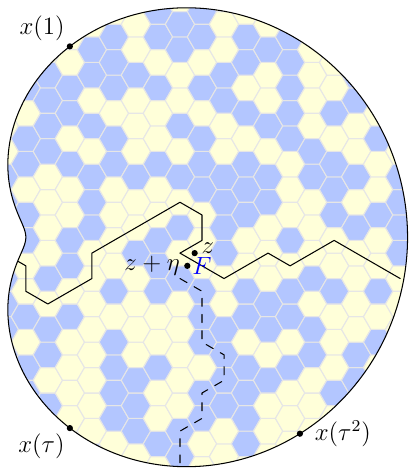}
  \caption{The event $E_1(z)\setminus E_1(z+\eta)$ occurs if and only if
    there are disjoint yellow arms from $z$ to $[x(\tau^2),x(1)]$ and from $z$ to
    $[x(1),x(\tau)]$ forming a simple path separating $z$ from
    $[x(\tau),x(\tau^2)]$, as well as a blue arm from $z+\eta$ to
    $[x(\tau),x(\tau^2)]$ which prevents a yellow path from separating
    $z+\eta$ as well.} \label{fig:threearm}
\end{figure}
\end{pictures}

Our second estimate is Smirnov's ``color switching'' lemma.

\begin{prop} [Lemma 2.1 in \cite{Smirnov}] \label{prop:colorswitching} For every vertex $z\in
  (\Omega^\delta)^*$ and $k\in \{0,1,2\}$, we have
  \[P^{\delta}_{\tau^k}(z,\eta) = P^{\delta}_{\tau^{k+1} }(z,\tau\eta).\]
\end{prop}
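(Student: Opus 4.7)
The plan is to apply Smirnov's color-switching technique, which rests on the exploration path and the Markov property of Bernoulli percolation. First, I would recast the symmetric difference $E^\delta_{\tau^k}(z+\eta)\setminus E^\delta_{\tau^k}(z)$ as a three-arm event localized near the hexagonal edge shared by $z$ and $z+\eta$. The yellow separating path that exists for $z+\eta$ but not for $z$ must pass between the two hexagons on either side of this edge, and it decomposes into two disjoint yellow arms connecting to the arcs $[x(\tau^{k+2}),x(\tau^k)]$ and $[x(\tau^k),x(\tau^{k+1})]$; in addition, there must be a blue arm from the $z$-side of the edge to the third arc $[x(\tau^{k+1}),x(\tau^{k+2})]$, whose presence is exactly what prevents any yellow path from separating $z$ as well. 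This is the three-arm picture depicted in the figure preceding the proposition.

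The core step is color swapping via exploration. Consider the exploration path $\gamma$ started at $x(\tau^k)$ that traces the yellow/blue interface between the arcs $[x(\tau^{k+2}),x(\tau^k)]$ and $[x(\tau^k),x(\tau^{k+1})]$, turning at each step according to the color of the next hexagon it reveals. On the three-arm event just described, $\gamma$ is forced to reach the shared edge $(z,z+\eta)$ and pass between $z$ and $z+\eta$. By the Markov property, the colors of the hexagons not yet inspected by $\gamma$ up to this stopping time remain i.i.d.\ uniform, so we may swap yellow and blue throughout the uninspected region without changing the conditional probability of the event ``the blue cluster of $z$ reaches $[x(\tau^{k+1}),x(\tau^{k+2})]$''.

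After the swap, the former blue arm becomes a yellow arm, while the initial segment of $\gamma$ is unchanged. Reinterpreting the new configuration with the roles of the three boundary arcs cyclically shifted by $k\mapsto k+1$, one obtains exactly the three-arm event defining $E^\delta_{\tau^{k+1}}(z+\tau\eta)\setminus E^\delta_{\tau^{k+1}}(z)$: two yellow arms from near the new edge $(z,z+\tau\eta)$ to $[x(\tau^k),x(\tau^{k+1})]$ and $[x(\tau^{k+1}),x(\tau^{k+2})]$, together with a blue arm from the $z$-side to $[x(\tau^{k+2}),x(\tau^k)]$. Taking probabilities then yields the claimed identity $P_{\tau^k}(z,\eta) = P_{\tau^{k+1}}(z,\tau\eta)$.

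The main obstacle is the local geometric bookkeeping. One must verify, using the three-fold rotational symmetry of the hexagonal lattice at the common vertex of the three hexagons involved, that the color swap sends the ``yellow-side neighbor'' of $z$ from $z+\eta$ to $z+\tau\eta$, and not to $z+\tau^2\eta$, with the arc indices simultaneously shifting by $k\mapsto k+1$. This reduces to aligning the chirality of the exploration rule (a left- versus right-turn convention at each step) with the counter-clockwise labeling convention used to define the events $E^\delta_{\tau^k}$ on the three-pointed domain.
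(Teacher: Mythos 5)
Your high-level strategy — recast $P_{\tau^k}(z,\eta)$ as a three-arm probability localized near $z$ and swap colors on an unexplored region via the strong Markov property — is indeed how Smirnov proves Lemma~2.1 (the paper itself gives no proof, only the citation). However, the specific exploration you set up cannot work: you start $\gamma$ at $x(\tau^k)$, the corner between the arcs $[x(\tau^{k+2}),x(\tau^k)]$ and $[x(\tau^k),x(\tau^{k+1})]$, yet on the event $E_{\tau^k}(z+\eta)\setminus E_{\tau^k}(z)$ the arms reaching both of those arcs are \emph{yellow} (together they form the separating path). There is no yellow/blue interface anchored at $x(\tau^k)$, so there is no reason the exploration you describe should be ``forced to reach the shared edge.'' The exploration must be anchored at $x(\tau^{k+1})$ or $x(\tau^{k+2})$, the corners at which a yellow arm and the blue arm meet $\partial\Omega$ on opposite sides.

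Even with a correctly anchored exploration (say from $x(\tau^{k+2})$), a second gap appears. The hexagons $\gamma$ reveals form a yellow chain (the near side of the arm to $[x(\tau^{k+2}),x(\tau^k)]$) \emph{and} a blue chain (the near side of the blue arm to $[x(\tau^{k+1}),x(\tau^{k+2})]$); flipping only the uninspected region leaves both revealed chains their original colors and flips the one entirely unrevealed yellow arm. The result is an event with arm colors (yellow, blue, blue) to the three arcs, which is the \emph{global color swap} of the target event $E_{\tau^{k+1}}(z+\tau\eta)\setminus E_{\tau^{k+1}}(z)$; you still need to invoke the $p=1/2$ yellow$\leftrightarrow$blue symmetry as a separate step, and your claim that ``the former blue arm becomes a yellow arm'' after swapping the uninspected region is not correct as stated, since that arm is partially revealed. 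Finally, the last paragraph defers to ``bookkeeping'' precisely the fact that needs proving: that the swap takes the distinguished direction $\eta$ to $\tau\eta$ while shifting $k\mapsto k+1$. Without carrying this out you have at best argued $P_{\tau^k}(z,\eta)=P_{\tau^{k+1}}(z,\eta')$ for some unidentified unit $\eta'$, which is weaker than the statement.
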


We will sometimes drop the superscript $\delta$ from the notation
  when it's clear from context. If $F$ is a hexagonal face in
$(\Omega^\delta)^*$, let $V(F)$ denote the set of vertices of $F$ and
define for each $z\in V(F)$ the vector $\eta$ pointing to the adjacent
vertex counterclockwise from $z$. Define the difference (see Figure~\ref{fig:difference_events}(a))
\[
R_k(z)\colonequals
|P_{\tau^k}(z+\tau^k
\eta,-\tau^k\eta)-P_{\tau^k}(z+\tau^{k+1}\eta,-\tau^k\eta)|.
\] 
Define $z'=z+\tau\eta-\eta$
and rewrite $P_{\tau^k}(z',\eta)$ as $P^{\Omega'}_{\tau^k}(z,\eta)$, where
$\Omega'$ is obtained by translating $\Omega$ by $z-z'$ (and $P^{\Omega'}$
refers to probability with respect to $\Omega'$). Define the events
$E_{\tau^k}'(z)$ with respect to $\Omega'$, and define $x'(\tau^k)$ to be
$x(\tau^k)$ translated by $z-z'$.

Given $k,l\in\{0,1,2\}$, $\sigma \in \{-1,1\}$, and $z\in
(\Omega^\delta)^*$, we say that the event $E^\text{five
  arm}_{\tau^k,\tau^l,\sigma}(z)$ occurs if
\begin{itemize}
\item $\sigma = 1$, and $E_{\tau^k}(z)\setminus
E_{\tau^k}(z+\tau^k\eta)$ occurs, and the arm from $z$ to \\
$[x(\tau^{l+1}),x(\tau^{l+2})]$ fails to connect in $\Omega'$, or
\item $\sigma = -1$, and $E'_{\tau^k}(z)\setminus
E'_{\tau^k}(z+\tau^k\eta)$ occurs, and the arm from $z$ to \\
$[x'(\tau^{l+1}),x'(\tau^{l+2})]$ fails to connect in $\Omega$.
\end{itemize}
For $z_0\in \Omega$, we define $E^\text{five
    arm}_{\tau^k,\tau^l,\sigma}(z)$ to be the union of $E^\text{five
    arm}_{\tau^k,\tau^l,\sigma}(z)$ as $z$ ranges over the vertices of the
  hexagonal face containing $z_0$.

Note that these are indeed five-arm events because two additional arms are
required to prevent the failed arm from connecting elsewhere on
$[x(\tau^l),x(\tau^{l+1})]$ (see Figure~\ref{fig:fivearm}).

\begin{prop} \label{prop:CR} If $F$ is a hexagonal face in
  $(\Omega^\delta)^*$, then for $z_0$ in the interior of $F$ we have
  \begin{align} \label{CReq} \delta |\bar\partial G^\delta(z_0)|&\leq
    3\sqrt{3} \max_{z \in V(F),\,k\in\{0,1,2\} } R_k(z) \\ \label{CReq2}
    &\leq 54\sqrt{3} \max_{k,l\in
      \{0,1,2\},\sigma\in
      \{-1,1\}}\bbP(E_{\tau^k,\tau^l,\sigma}^{\text{five arm}}(z_0)).
\end{align}
\end{prop}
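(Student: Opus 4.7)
For the first inequality, I would start from the observation that inside each triangle $T$ of the chosen triangulation of $F$, $G^\delta$ is an affine function, so Green's theorem yields the explicit formula
$\bar\partial G^\delta|_T = \frac{1}{4i\,\mathrm{Area}(T)}\bigl[G^\delta(a)(b-c)+G^\delta(b)(c-a)+G^\delta(c)(a-b)\bigr]$
where $a,b,c$ are the vertices of $T$. Expanding $G^\delta = H_1+\tau H_\tau + \tau^2 H_{\tau^2}$ and using the telescoping identity
$H_{\tau^k}(v+\eta)-H_{\tau^k}(v) = P_{\tau^k}(v,\eta)-P_{\tau^k}(v+\eta,-\eta)$,
which just expresses the indicator symmetric difference $E_{\tau^k}(v+\eta)\,\triangle\, E_{\tau^k}(v)$ in the two possible orientations, reduces the Green's-theorem combination to a finite linear combination of $P_{\tau^k}$ terms at vertices of $F$. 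I would then apply the color-switching Proposition~\ref{prop:colorswitching} to rotate the displacement $\eta$ by $\tau^k$ at the cost of changing the color index, so that every $P$-term in the sum carries the \emph{same} displacement. After this rotation the main contributions cancel pairwise, and what survives consists precisely of differences of $P_{\tau^k}(\,\cdot\,,\eta)$ evaluated at two distinct but adjacent base points — which is the definition of $R_k(v)$. The constant $3\sqrt{3}$ arises from the hexagonal geometry (ratio of hexagon area $\tfrac{3\sqrt{3}}{2}\delta^2$ to the edge length $\delta$) together with the count of surviving residual terms.

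For the second inequality, I would write
$R_k(v) = \bigl|P^\Omega_{\tau^k}(z_1,-\tau^k\eta)-P^\Omega_{\tau^k}(z_2,-\tau^k\eta)\bigr|$
and use the paper's reparametrization $P^\Omega_{\tau^k}(z_2,\,\cdot\,) = P^{\Omega'}_{\tau^k}(z_1,\,\cdot\,)$, so that $R_k(v)$ is a difference of two three-arm probabilities with the \emph{same} base point but in two slightly translated domains $\Omega$ and $\Omega'$. Coupling the two percolation configurations by assigning the same $\{0,1\}$ Bernoulli variable to each site of $\Omega\cap\Omega'$ and extending independently, $R_k(v)$ is bounded by the probability of the symmetric difference $(E_{\tau^k}(z_1+\tau^k\eta)\setminus E_{\tau^k}(z_1))\,\triangle\,(E'_{\tau^k}(z_1+\tau^k\eta)\setminus E'_{\tau^k}(z_1))$. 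Any configuration in this symmetric difference realizes the three-arm event in exactly one of the two domains; in the other it must fail, and since the domains differ only by a translation of order $\delta$ this failure forces one of the three yellow arms to reach $[x(\tau^{l+1}),x(\tau^{l+2})]$ in one domain but not in the other, adding two additional arms. This is exactly $E^{\text{five arm}}_{\tau^k,\tau^l,\sigma}(z)$, with $\sigma$ recording which domain carries the three-arm event. A union bound over the six triples $(k,l,\sigma)$ and one of the three possible choices of $k$ in $R_k$ produces the prefactor $18\sqrt{3}$ when combined with~\eqref{CReq}.

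The main obstacle will be the bookkeeping in the Cauchy--Riemann computation: one has to organize the expansion so that, after color-switching, every leading $P$-term is matched by a partner of opposite sign whose base point differs by a single lattice step, leaving residuals of \emph{exactly} the form $R_k(v)$ with no accidental proliferation that would inflate the constant. A secondary difficulty is verifying rigorously that the symmetric-difference configurations in the second inequality really do force five disjoint arms (and not, say, three arms together with a local event that could be cheaper), which requires careful topological inspection of how the separating open path can sit relative to the two nearby base points and the shifted boundary arcs.
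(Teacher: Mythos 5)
Your proposal follows essentially the same route as the paper's proof: expand $\bar\partial G^\delta$ on the affine pieces in terms of vertex values, reduce $H$-differences to $P$-terms, apply color-switching to align the displacement vectors so that the residual is exactly $\max_k R_k(v)$, then rewrite $R_k$ as a difference of three-arm probabilities in $\Omega$ versus a translated domain $\Omega'$, bound it by $\bbP(A\triangle B)$, and decompose the symmetric difference into the six five-arm events indexed by $(l,\sigma)$. The Green's-theorem formula for $\bar\partial$ of an affine function is equivalent to the paper's direct computation $\delta\bar\partial = \delta\lambda(\partial_\eta - \tau^{-1}\partial_{\tau\eta})$ with $\lambda = 1/2+i/(2\sqrt 3)$, and your coupling of the two configurations is what the paper's inequality $|\bbP(A)-\bbP(B)|\le\bbP(A\triangle B)$ tacitly assumes. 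One inaccuracy worth flagging: the constant $3\sqrt{3}$ does not come from the ratio of hexagon area to edge length as you suggest. In the paper it arises as $|\lambda|=1/\sqrt 3$ times three $P$-difference terms (giving $\sqrt 3\max_k R_k$ for a triangle with consecutive vertices $v,v+\eta,v+\tau\eta$), and then an extra factor of $3$ because a non-consecutive triangle in the triangulation is handled by applying the consecutive-triangle estimate up to three times; your Green's-theorem expansion would have to reproduce this chaining to get the same constant for the non-equilateral triangles in \includegraphics[height=10pt]{figures/triangulations2}.
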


\begin{pictures}
\begin{figure}
  \centering
  \includegraphics[width=8cm]{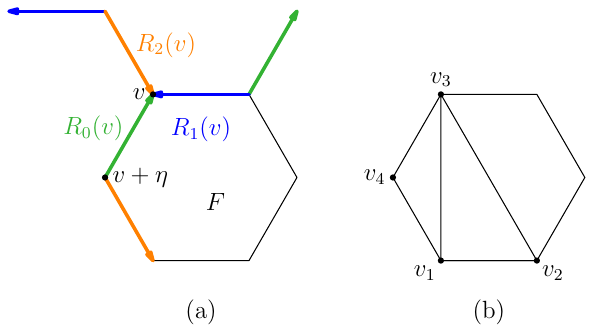}
  \caption{(a) Each arrow represents the probability of a three-arm event
    as shown in Figure~\ref{fig:threearm}. The quantity $R_0(z)$ is defined
    to be the difference between the probabilities represented by the two
    green arrows. Similarly, $R_1(z)$ is shown in blue and $R_2(z)$ is
    shown in orange. (b) Suppose that the triangle $z_1z_2z_3$ is in the
    triangulation of the face $F$. For $z$ in the interior of this
    triangle, we bound $\bar\partial G^\delta(z)$ by applying
    \eqref{eq:adj} to triangles $z_2z_1z_4$ and $z_1z_4z_3$.
  } \label{fig:difference_events}
\end{figure}
\end{pictures}

\begin{pictures}
\begin{figure}
  \centering
  \includegraphics{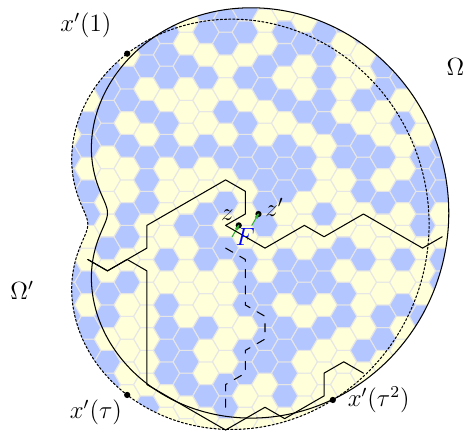}
  \caption{ The symmetric difference of the events $E_{1}(z)\setminus
    E_{1}(z+\eta)$ and $E'_{1}(z')\setminus E'_{1}(z'+\eta)$
    can occur in six ways. One way for the event to occur is shown above:
    the three requisite arms are present in $\Omega$, so the event
    $E_{1}(z)\setminus E_{1}(z+\eta)$ occurs. However, the blue
    arm fails to connect to $[x'(\tau),x'(\tau^2)]$ in $\Omega'$. This
    requires two additional yellow arms to prevent the blue arm from
    connecting elsewhere on $[x'(\tau),x'(\tau^2)]$. This event is denoted
    $E^\text{five arm}_{1,1,1}(z)$. The first subscript $\tau^k$ specifies
    that the three-arm event under consideration involves the blue arm
    touching down on $[x(\tau^{k+1}),x(\tau^{k+2})]$. The second subscript
    $\tau^l$ indicates that the boundary arc
    $[x(\tau^{l+1}),x(\tau^{l+2})]$ is involved in a failed connection.
    The third subscript $\sigma$ describes whether the failed connection
    occurs in $\Omega$ but not $\Omega'$ (in which case we say $\sigma =
    1$), or vice versa ($\sigma = -1$).} \label{fig:fivearm}
\end{figure}
\end{pictures}

\begin{proof}
The main idea in the following proof is suggested in \cite{Smirnov}. For
\eqref{CReq}, we first observe that for $z\in V(F)$, we have
\begin{align*}
  \delta \left[ \pd{\eta}H_{\tau^k}(z) -\pd{(\tau\eta)}H_{\tau^{k+1}}(z)
  \right]&= P_{\tau^k}(z,\eta) - P_{\tau^k}(z+\eta,-\eta)\\ &\hspace{2cm}-
  P_{\tau^{k+1}}(z,\tau\eta)
  + P_{\tau^{k+1}}(z+\tau\eta,-\tau\eta)  \\
  &= P_{\tau^{k+1}}(z+\tau\eta,-\tau\eta)-P_{\tau^k}(z+\eta,-\eta)
  \\ &= P_{ {\tau^k}}(z+\tau\eta,-\eta)-P_{\tau^k}(z+\eta,-\eta),
\end{align*}
by Proposition~\ref{prop:colorswitching}.  Suppose that the triangle $T$
with vertices $z$, $z+\eta$, and $z+\tau\eta$ is in the triangulation of
$F$. Then for $z$ in the interior of $T$, we may write $\delta \bar \partial$ as
$\delta\lambda\left(\frac{\partial}{\partial \eta} -
  \frac1\tau\frac{\partial}{\partial (\tau\eta)}\right)$, where $\lambda =
1/2+i/(2\sqrt{3})$. We obtain
\begin{align} \nonumber
  \delta \left|\lambda  \left(\pd{\eta} - \frac{1}{\tau}\right.\right.&\left.\left.\pd{(\tau\eta)}\right)\vphantom{\pd{\eta} }(H_1+\tau
    H_\tau + \tau^2 H_{\tau^2})\right| \\ \nonumber
  &= |\lambda|\left|\left( \frac{\partial H_1}{\partial
        \eta}-\frac{\partial H_\tau}{\partial \tau\eta}\right) + \tau
    \left( \frac{\partial H_\tau}{\partial \tau \eta}-\frac{\partial
        H_{\tau^2}}{\partial \tau^2\eta}\right) + \tau^2 \left(
      \frac{\partial H_\tau^2}{\partial \tau^2\eta}-\frac{\partial
        H_1}{\partial \eta}\right)\right| \\ \label{eq:adj}
  &\leq \sqrt{3} \max_{k\in \{0,1,2\}}\left|P_{\tau^k}(z+\tau^k
\eta,-\tau^k\eta)-P_{\tau^k}(z+\tau^{k+1}\eta,-\tau^k\eta)\right|.
\end{align}
For triangles whose vertices are not consecutive vertices of the hexagon,
we obtain a similar bound by applying \eqref{eq:adj} two or three
times (see Figure~\ref{fig:difference_events}(b)).

For the bound in \eqref{CReq2}, we let $A= E_{\tau^k}(z)\setminus
E_{\tau^k}(z+\tau^k\eta)$ and $B=E'_{\tau^k}(z) \setminus
E'_{\tau^k}(z+\tau^k\eta)$ and apply $ |\bbP(A) - \bbP(B)|\leq
\bbP(A\bigtriangleup B),$ where $A \bigtriangleup B$ denotes the symmetric
difference of $A$ and $B$.  Note that $A \bigtriangleup B \subset
\bigcup_{k,l,\sigma}E^{\text{five
    arm}}_{\tau^k,\tau^l,\sigma}(z_0)$, since some arm in $\Omega$
must fail to connect in $\Omega'$, or vice versa. Applying a union bound as
$k$ and $l$ range over $\{0,1,2\}$ and $\sigma$ ranges over $\{-1,1\}$
yields the result.
\end{proof}

Finally, we need the following a priori estimates for $H_{\tau^k}(z)$ when $z$
is near $\partial \Omega$.

\begin{prop} \label{prop:boundary} Let $(\Omega,x(1),x(\tau),x(\tau^2))$ be
  a three-pointed Jordan domain.  There exists $c>0$ such that for every $z
  \in (\Omega_\delta)^*$ which is closer to $[x(\tau^{k+1}),x(\tau^{k+2})]$
  than to $\partial \Omega \setminus [x(\tau^{k+1}),x(\tau^{k+2})]$, the
  following statements hold.
\begin{enumerate}[label=(\roman*)]
\item $H_{\tau^k}(z)\lesssim \dist(z,\partial \Omega)^{c}  $.
\item $|S(z) - 1 |\lesssim \dist(z,\partial\Omega)^c$.
\item $\dist(G^\delta(z),[x(\tau^{k+1}),x(\tau^{k+2})]) \lesssim \dist(z,[\tau^{k+1},\tau^{k+2}])^{c}$,
\end{enumerate}
with implied constants depending only on
$(\Omega,x(1),x(\tau),x(\tau^2))$.
\end{prop}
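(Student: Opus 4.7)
The plan is to prove part (i) via a Russo-Seymour-Welsh (RSW) estimate and then obtain (ii) and (iii) as short corollaries. For (i), write $\gamma := [x(\tau^{k+1}), x(\tau^{k+2})]$ and $r := \dist(z,\partial\Omega)$; the hypothesis ensures $\dist(z,\gamma)=r$ and $\dist(z,\partial\Omega\setminus\gamma)\ge r$. The event $E_{\tau^k}(z)$ requires an open path from $[x(\tau^{k+2}),x(\tau^k)]$ to $[x(\tau^k),x(\tau^{k+1})]$ that separates $z$ from $\gamma$, and any such path is forced through the thin region squeezed between $z$ and $\gamma$. Away from the corners of $\partial\Omega$, I would straighten a neighborhood of $\gamma$ via a bi-Lipschitz change of coordinates (using analyticity of $\gamma$) and reduce the existence of a separating open path to the existence of an open crossing of a rectangle of aspect ratio $\asymp L/r$ in its hard direction, where $L$ depends only on $\Omega$. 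The standard RSW/FKG toolbox then yields an upper bound of the form $C(r/L)^c$ for some $c>0$ depending only on $\Omega$. Near a corner of opening angle $2\pi\alpha$, I would additionally apply a conformal straightening (locally $\zeta\mapsto \zeta^{1/(2\alpha)}$) before the RSW step, which introduces a controlled distortion of scales and yields an exponent depending on $\alpha$, matching the angular exponents in Remark~\ref{rem:sharp_exponent}.

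For (ii), the three events $E_1(z), E_\tau(z), E_{\tau^2}(z)$ are pairwise disjoint by a topological obstruction: if two of the corresponding open separating paths coexisted, their separation properties would place $z$ simultaneously in two disjoint subregions of $\Omega$. Hence $S(z)\le 1$ at every lattice site, and $1-S(z)$ equals the probability that none of the three events occurs. By planar duality this complementary event forces three disjoint closed arms from $z$ to the three boundary arcs, and an RSW argument identical to that of (i) (with closed replacing open) bounds this probability by $O(r^c)$. Since $S$ is extended to $\Omega$ by linear interpolation, the pointwise inequality at lattice sites transfers to the interpolated $S$ with the same exponent.

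For (iii), rewrite
\[
G^\delta(z)=\tau^{k+1}H_{\tau^{k+1}}(z)+\tau^{k+2}H_{\tau^{k+2}}(z)+\tau^k H_{\tau^k}(z),
\]
and invoke (i) to absorb the last term into an $O(r^c)$ error. By (ii), $H_{\tau^{k+1}}(z)+H_{\tau^{k+2}}(z)=S(z)-H_{\tau^k}(z)=1+O(r^c)$, so the first two terms form a convex combination of $\tau^{k+1}$ and $\tau^{k+2}$ up to $O(r^c)$. Since the convex hull of $\{\tau^{k+1},\tau^{k+2}\}$ is the segment $[\tau^{k+1},\tau^{k+2}]$, this places $G^\delta(z)$ within $O(r^c)$ of that segment.

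The main obstacle is obtaining the RSW estimate (i) uniformly across $\partial\Omega$, particularly near corners at which the interior angle can be close to $2\pi$ (the reflex case). The conformal straightening at such corners degrades the achievable exponent, which is the source of the angle-dependent exponents in Remark~\ref{rem:sharp_exponent}. The hypotheses of piecewise analyticity and strictly positive angles are essential to guarantee a single mesoscopic scale on which the RSW bound applies at every boundary point; without them the constants and exponents cannot be taken uniform.
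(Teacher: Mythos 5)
Your part~(i) works but is more elaborate than necessary: the paper simply observes that the separating open path in $E_{\tau^k}(z)$ must cross the full-plane annulus centered at the nearest boundary point $z'$ with inner radius $\dist(z,\partial\Omega)$ and outer radius a constant $D$ depending only on the domain (any point of $[x(\tau^{k+1}),x(\tau^{k+2})]$ is at distance at least $D$ from one of the other two arcs). This gives $O(r^c)$ by RSW directly, with no straightening, no corner analysis, and no dependence of $c$ on the interior angles. (The angle-dependent exponents you allude to are relevant to the sharp rate of Remark~\ref{rem:sharp_exponent}, not to the a~priori bound asserted here.) Part~(iii), given (i) and (ii), is correct and matches the paper in spirit.

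Part~(ii), however, contains a genuine error: the events $E_1(z)$, $E_\tau(z)$, $E_{\tau^2}(z)$ are \emph{not} pairwise disjoint, and the claimed topological obstruction does not exist. The ``two disjoint subregions'' you invoke are cut out by two \emph{different} open paths, so there is no contradiction in $z$ lying in both. Concretely, if there is an open (yellow) circuit surrounding $z$ and touching all three boundary arcs, then for each $k$ one can take the portion of the circuit running from $[x(\tau^{k+2}),x(\tau^k)]$ to $[x(\tau^k),x(\tau^{k+1})]$ on the far side of $z$; this separates $z$ from $[x(\tau^{k+1}),x(\tau^{k+2})]$, so all three events occur simultaneously. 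Consequently $S(z)\le 1$ is not automatic, and $1-S(z)$ is not the probability that none of the three events occurs, so the subsequent arm-event bound does not apply. The paper's actual mechanism is different: it introduces $z'$ as a fourth marked point, notes that the yellow crossing from $[x(\tau^{k+2}),x(\tau^k)]$ to $[x(\tau^{k+1}),z']$ and the blue crossing from $[x(\tau^k),x(\tau^{k+1})]$ to $[z',x(\tau^{k+2})]$ are \emph{exactly} complementary in that four-pointed domain, and then matches these two probabilities to $H_{\tau^{k+1}}(z)$ and $H_{\tau^{k+2}}(z)$ up to $O(r^c)$ corrections (of the same flavor as (i)). That duality-plus-color-symmetry identity, not disjointness of the $E_{\tau^k}(z)$, is what forces $H_{\tau^{k+1}}+H_{\tau^{k+2}}=1+O(r^c)$; combined with $H_{\tau^k}=O(r^c)$ from (i) this gives (ii). You would need to replace your disjointness step with this duality argument (or an equivalent one) for the proof to go through.
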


\begin{proof}
  (i) For $w\in [x(\tau^{k+1}),x(\tau^{k+2})]$, define $D_1(w)$ and
  $D_2(w)$ to be the distances from $w$ to the boundary arcs
  $[x(\tau^{k+2}),x(\tau^k)]$ and $[x(\tau^{k}),x(\tau^{k+1})]$,
  respectively. Let $D=\inf_{w\in
    [x(\tau^{k+1}),x(\tau^{k+2})]}\max(D_1(w),D_2(w))>0$. Let $z'\in
  [x(\tau^{k+1}),x(\tau^{k+2})]$ be a closest point to $z$, and consider
  the annulus centered at $z'$ with inner radius $|z - z'|$ and outer
  radius $R$. Then $E_{\tau^k}(z)$ entails a crossing of this annulus,
  which has probability $O(|z-z'|^c)$ by Russo-Seymour-Welsh.

  (ii) Again let $z'\in [x(\tau^{k+1}),x(\tau^{k+2})]$ be a point nearest to $z$. Consider the
  event that there is a yellow crossing from $[x(\tau^{k+2}),x(\tau^k)]$ to
  $[x(\tau^{k+1}),z']$ and the event that there is a blue crossing from
  $[x(\tau^{k}),x(\tau^{k+1})]$ to $[z',x(\tau^{k+2})]$. These events are mutually
  exclusive, and their union has probability 1. Since these two
  events have probability $H_{\tau^{k+1}}(z)$ and $H_{\tau^{k+2}}(z)$, we see that
  \begin{align*}
  H_{\tau^k}(z) + (H_{\tau^{k+1}}(z) + H_{\tau^{k+2}}(z)) &= O((\dist(z,\partial \Omega)^c) +
  1.
  \end{align*}

  (iii) This statement says that $G$ maps points near each boundary arc to
  the corresponding image segment in the triangle, and it follows
  directly from (i).
\end{proof}

\subsection{Percolation Estimates}

In this subsection we present several percolation-related estimates in
preparation for the proof of Theorem~\ref{thm:general}. We think of these
lattices as embedded in $\bbR^2$ with mesh size $\delta$, and distances are
measured in the Euclidean metric.

Define $\mathcal{C}^k_{\theta}(r,R)$ to be the event that there exist $k$
disjoint crossings of alternating colors from the inner to the outer
boundary of an annular section $A_\theta(r,R)$ of angle $\theta$ and inner
radius $r$ and outer radius $R$. The following is a well-known result on
the half-annulus two-arm and three-arm exponents. We refer the reader to
\cite[Appendix A]{LSW} for a proof.

\begin{prop} \label{prop:twothreearm} We have
\begin{align*}
  P^\delta(\calC^2_\pi(r,R)) &\asymp \frac{r}{R}, \text{ and}\\
  P^\delta(\calC^3_\pi(r,R)) &\asymp \left(\frac{r}{R}\right)^2 .
\end{align*}
\end{prop}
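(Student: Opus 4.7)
My plan would be to combine two standard ingredients from the percolation literature: quasi-multiplicativity of half-plane arm events across scales, and the explicit identification of the 2-arm and 3-arm half-plane exponents as $1$ and $2$.

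First, I would establish quasi-multiplicativity: for $r\le s\le R$ and $k\in\{2,3\}$,
\[
P^\delta(\calC^k_\pi(r,R)) \asymp P^\delta(\calC^k_\pi(r,s))\,P^\delta(\calC^k_\pi(s,R)).
\]
The $\lesssim$ direction is immediate since the two factor events are supported on disjoint portions of the configuration and hence are independent. The $\gtrsim$ direction is the classical gluing argument: given arms witnessing the two factor events, a Kesten-style \emph{separation of arms} lemma (combined with the Russo-Seymour-Welsh estimates available on the triangular lattice) lets us arrange that the arms reach the intermediate scale $s$ with well-separated landing points, and then an RSW construction inside a fattened annular layer around radius $s$ produces, at constant cost, the additional crossings needed to concatenate the two configurations.

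Second, iterating quasi-multiplicativity across $\log_2(R/r)$ dyadic scales reduces the proposition to the fixed-aspect-ratio estimates
\[
P^\delta(\calC^2_\pi(r,2r)) \asymp \tfrac{1}{2}, \qquad P^\delta(\calC^3_\pi(r,2r)) \asymp \tfrac{1}{4},
\]
which are equivalent to the half-plane arm exponents being exactly $1$ and $2$. For the 2-arm case, I would use the duality observation that an alternating yellow/blue 2-arm crossing of the half-annulus is equivalent to the existence of a single open/closed percolation interface traversing it; the probability that such an interface emanating near a boundary point reaches distance $R$ admits matching bounds of order $r/R$ via harmonic-measure and RSW-type arguments. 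For the 3-arm case, the upper bound follows from a BK/Reimer-style inequality: three disjoint alternating arms decompose into two disjoint interface crossings, each of probability $\asymp r/R$ by the 2-arm bound, giving $(r/R)^2$. The matching lower bound is produced by an explicit RSW construction of two well-separated interfaces at each dyadic scale.

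The main obstacle is pinning down the exact value of the half-plane 2-arm exponent. Quasi-multiplicativity by itself yields only an asymptotic of the form $P^\delta(\calC^k_\pi(r,R))\asymp (r/R)^{\alpha_k}$ for some positive $\alpha_k$; forcing $\alpha_2 = 1$ requires the interface/duality analysis at the base scale, and once that is in hand $\alpha_3 = 2$ follows from the disjoint-interface decomposition together with the separation-of-arms lemma. The complete arguments for both exponents appear in \cite{LSW}.
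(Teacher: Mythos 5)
The paper gives no proof of this proposition; it is stated as a well-known half-plane estimate and the reader is referred to \cite{LSW}. Your reconstruction, however, has a genuine gap at the three-arm upper bound. The three-arm event produces exactly three pairwise-disjoint arms $a_1,a_2,a_3$ (yellow--blue--yellow), and the ``two interface crossings'' you describe both run along opposite sides of the middle blue arm $a_2$; since $a_2$ is generically a single-hexagon-wide path, both interfaces are witnessed by the same sites of $a_2$, so they do \emph{not} occur disjointly. More structurally: a disjoint occurrence of two two-arm events forces two disjoint yellow--blue pairs, i.e.\ four pairwise-disjoint arms, which the three-arm event simply does not supply. The only nontrivial disjoint split of $\{a_1,a_2,a_3\}$ is a pair plus a singleton, and BK/Reimer then gives
\[
P^\delta\bigl(\calC^3_\pi(r,R)\bigr)\ \lesssim\ P^\delta\bigl(\calC^2_\pi(r,R)\bigr)\,P^\delta\bigl(\calC^1_\pi(r,R)\bigr)\ \asymp\ (r/R)^{1+1/3},
\]
which falls well short of $(r/R)^2$. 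The proposed lower bound has the analogous problem: gluing ``two well-separated interfaces'' at each dyadic scale plus quasi-multiplicativity gives \emph{some} power law, but does not identify the exponent as $2$, for exactly the reason you flag in your last paragraph.

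The two-arm part shares this issue. Quasi-multiplicativity together with the fixed-aspect-ratio RSW estimate $P^\delta(\calC^2_\pi(r,2r))\asymp 1$ only yields $P^\delta(\calC^2_\pi(r,R))\asymp (r/R)^{\alpha_2}$ for an unidentified $\alpha_2>0$; ``harmonic-measure and RSW-type arguments'' is not a mechanism that produces the exact value $\alpha_2=1$. What is missing in both cases is Kesten's counting identity: one shows that the expected number of boundary sites in a box of side $n$ carrying the relevant alternating-arm event to scale $\asymp n$ is bounded above and below by absolute constants, by identifying those sites with essentially unique geometric features of the crossing structure (endpoints of macroscopic interfaces for two arms; the lowest point of the lowest crossing and its dual for three arms), and then transfers the estimate between landing points with separation of arms. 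That step, not BK/Reimer, is what forces $\alpha_2=1$ and $\alpha_3=2$. Since the paper itself simply cites \cite{LSW} here, doing the same is of course fine; but a self-contained sketch needs the counting argument in place of the BK/Reimer reduction.
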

In the next proposition, we show that the exponents in the estimates above
are continuous in the angle $\theta$.

\begin{prop} \label{prop:cont_exp} 
  For all $\eps>0$, there exists $\alpha=\alpha(\eps)>0$ so that
\begin{align}
P^\delta(\calC^2_{\pi+\alpha}(r,R)) &\lesssim
\left(\frac{r}{R}\right)^{1-\eps}, \text{ and} \label{eqn:two} \\
P^\delta(\calC^3_{\pi+\alpha}(r,R)) &\lesssim
\left(\frac{r}{R}\right)^{2-\eps}, \label{eqn:three}
\end{align}
with implied constants depending only on $\eps$.
\end{prop}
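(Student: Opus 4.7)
My plan is to combine scale-by-scale independence of percolation with a continuity-in-angle comparison between the sector of angle $\pi+\alpha$ and the half-annulus.

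First, I fix a large integer $M=M(\eps)$ (to be chosen) and decompose the radial range $[r,R]$ into scales $r_j = M^j r$ for $0 \leq j \leq n$, where $n = \lfloor \log_M(R/r) \rfloor$. Since the annular sectors $A_{\pi+\alpha}(r_j, r_{j+1})$ are pairwise disjoint, the events $\calC^k_{\pi+\alpha}(r_j, r_{j+1})$ involve disjoint collections of sites and are mutually independent, so
\[
  P^\delta(\calC^k_{\pi+\alpha}(r, R)) \leq \prod_{j=0}^{n-1} P^\delta(\calC^k_{\pi+\alpha}(r_j, r_{j+1})).
\]
It therefore suffices to establish the uniform single-scale bound $P^\delta(\calC^k_{\pi+\alpha}(\rho, M\rho)) \leq M^{-(k-1-\eps)}$; the product then yields $P^\delta(\calC^k_{\pi+\alpha}(r, R)) \lesssim (r/R)^{k-1-\eps}$, matching the required exponent for $k = 2, 3$.

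For the single-scale bound, I decompose $\calC^k_{\pi+\alpha}(\rho, M\rho) \subseteq \calC^k_\pi(\rho, M\rho) \cup F$, where $F$ is the residual event that the $k$-arm event fails in the half-annulus but holds in the larger sector. Proposition~\ref{prop:twothreearm} gives $P^\delta(\calC^k_\pi(\rho, M\rho)) \leq CM^{-(k-1)}$, so choosing $M \geq (2C)^{1/\eps}$ makes this contribution at most $\tfrac12 M^{-(k-1-\eps)}$. On $F$, planar duality forces a ``blocking'' monochromatic crossing of the opposite color in the half-annulus, which obstructs the arms realizing $\calC^k_{\pi+\alpha}$ from staying in the half-annulus; these arms must therefore bypass the blocker by entering the extra wedge $W = A_{\pi+\alpha}(\rho, M\rho) \setminus A_\pi(\rho, M\rho)$ of total angle $\alpha$. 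Since $W$ is thin (aspect ratio $\sim M/\alpha$), an RSW estimate yields $P^\delta(F) \leq C_M \alpha^{c'}$ for some $c'=c'(M) > 0$. Taking $\alpha=\alpha(\eps)$ small enough then makes this contribution also at most $\tfrac12 M^{-(k-1-\eps)}$.

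The main obstacle will be making the RSW bound on $F$ rigorous: the arms in the bigger sector can enter and exit $W$ in intricate patterns, not all of which are obviously full lengthwise crossings of the thin wedge, so converting the forced bypass of the blocking dual crossing into a concrete, RSW-controlled crossing of $W$ requires a careful case analysis. One must invoke the specific duality structure for $k$-arm events in the half-plane (for $k=2,3$) together with an auxiliary RSW argument showing that the endpoints of the blocking crossing lie sufficiently far from both the inner and outer arcs to force a substantial radial detour, so that the effective aspect ratio of the crossing is indeed $\sim M/\alpha$ up to losses that can be absorbed into the constant $C_M$.
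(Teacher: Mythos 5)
Your overall scaffolding matches the paper: reduce to a single-scale bound by independence across concentric annular shells, compare the sector of angle $\pi+\alpha$ to the half-annulus, and absorb the residual by taking $\alpha$ small after fixing the scale ratio. The bound $P^\delta(\calC^k_\pi(\rho,M\rho)) \leq CM^{-(k-1)}$ from Proposition~\ref{prop:twothreearm} and the choice $M \geq (2C)^{1/\eps}$ are fine. The paper's proof has exactly this structure (it does the multiplicative step last rather than first, but that is cosmetic).

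The gap is in your bound $P^\delta(F) \leq C_M\alpha^{c'}$ via ``RSW on the thin wedge.'' When a blocking circuit of the wrong color crosses the half-annulus with an endpoint $b$ on the angle-$0$ ray at some radius $t \in [\rho, M\rho]$, the arm that must bypass it is \emph{not} forced into a lengthwise crossing of $W = A_{\pi+\alpha}(\rho,M\rho)\setminus A_\pi(\rho,M\rho)$. It only has to make a local excursion: dip into $W$ in a small neighborhood of $b$, pass around the endpoint, and re-enter the half-annulus at a nearby radius. This excursion has both endpoints on the top edge of $W$, occupies an arbitrarily small patch of $W$, and is not an RSW-controlled crossing of a region of aspect ratio $M/\alpha$. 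No ``careful case analysis'' will turn it into one, because a configuration in which the dodge is local (say a roughly semicircular blue circuit near radius $\sqrt{\rho\cdot M\rho}$, with a yellow arm making a tiny detour into $W$ near that radius) has probability bounded away from zero as $\alpha\to 0$ if the only penalty charged is an RSW crossing of $W$. Your proposed auxiliary claim --- that the endpoints of the blocking crossing lie far from the inner and outer arcs --- is both false in general (they can be anywhere, with non-vanishing probability) and, even where true, insufficient, since the detour remains local regardless of where $b$ is.

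What actually makes $F$ rare is a \emph{local multi-arm event} near the bypass point $b$. In a half-annulus around $b$ of inner radius comparable to the width of $W$ at $b$ and outer radius comparable to $\operatorname{dist}(b, \{\text{inner arc}\}\cup\{\text{outer arc}\})$, one has three arms of alternating colors (two from the dodging arm, one from the blocking circuit), plus additional arms connecting this configuration to the arcs of the big annulus. The paper tiles $W$ by $O(\alpha^{-1})$ quadrilaterals of radial extent $\alpha$ (taking $R=1$), and at the quadrilateral at radial distance $s\alpha$ it charges a three-arm half-annulus event from scale $\alpha$ to scale $s\alpha \wedge (1-r-s\alpha)$ together with a two-arm event at larger scale; summing the estimates $\asymp 1/s^2$ dyadically over $s$ yields $P^\delta(\calC^2_{\pi+\alpha}(r,1)\setminus\calC^2_\pi(r,1)) \lesssim \alpha\log\alpha^{-1}$, which is of the form $\alpha^{c'}$ and suffices once $\alpha$ is chosen small relative to $M$. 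This geometric-series-over-locations argument, driven by Proposition~\ref{prop:twothreearm}, is the mechanism you need to replace your RSW step; without it the single-scale bound, and hence the proposition, is not established.
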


\begin{proof} We only prove (\ref{eqn:two}) since the proof of
  (\ref{eqn:three}) is essentially the same. We begin by showing that there
  exists $C>0$ so that for all $r>0$ and $R>0$, there exists
  $\delta_0=\delta_0(r,R,\eps)>0$ for which
  $P^\delta(\calC^2_{\pi+\alpha}(r,R)) \leq C
  \left(\frac{r}{R}\right)^{1-\eps}$ holds when $0<\delta<\delta_0$. For
  this statement, we may assume without loss of generality that $R=1$.

  Consider the sector of angle $\pi + \alpha$ as a union of a sector of
  angle $\pi$ with a sector of angle $\alpha$. Divide the sector of angle
  $\alpha$ into $\lceil (1-r)\alpha^{-1} \rceil $ curvilinear
  quadrilaterals of radial dimension $\alpha$, as shown in
  Figure~\ref{fig:sector}. Let $s \in \{1, \ldots,\lceil
  (1-r)\alpha^{-1}\rceil \}$ and note that the event
  $\mathcal{C}^2_{\pi+\alpha}(r,1) \setminus \mathcal{C}^2_{\pi}(r,1)$
  entails the existence of a quadrilateral of distance $s\alpha$ from the
  inner circle of radius $r$ such that there is a three-arm crossing of
  alternating colors of the half-annulus with inner radius $\alpha$ and
  outer radius $s\alpha \wedge (1-r-(s+1)\alpha)$.

  In the case $s\alpha \leq (1-r)/2$, there is also a two-arm crossing from
  the annulus of inner radius $s\alpha$ and outer radius $s\alpha + r$ (see
  Figure~\ref{fig:sector}(a)). If $s\in [2^k,2^{k+1}]$ and $s\alpha \leq
  (1-r)/2$, then the probability that both of these events occur is
  $O\left(\left(\frac{\alpha}{s\alpha}\right)^2\left(\frac{\alpha}{\alpha
          s +r}\right)\right)=O(\frac{\alpha}{2^k})$ by Proposition
  \ref{prop:twothreearm}. Applying a union bound over $s$ we obtain
\begin{equation} \label{diffevent}
P^\delta(\mathcal{C}^2_{\pi+\alpha}(r,1)\setminus
  \mathcal{C}^2_{\pi}(r,1)) \leq c \alpha \log^{-1} \alpha.
\end{equation}
Since $\mathcal{C}^2_{\pi}(r,1)\subset \mathcal{C}^2_{\pi+\alpha}(r,1)$,
(\ref{diffevent}) implies
\begin{align*}
P^\delta(\mathcal{C}^2_{\pi+\alpha}(r,1)) &\leq P^\delta(\mathcal{C}^2_{\pi}(r,1) + c
\alpha \log \alpha^{-1} \\
&\leq c(r + \alpha \log \alpha^{-1}).
\end{align*}

In the case $s\alpha > (1-r)/2$, the event $\mathcal{C}^2_{\pi+\alpha}(r,1) \setminus
  \mathcal{C}^2_{\pi}(r,1)$ implies the
existence of a two-arm crossing of alternating colors from the annulus of
inner radius $s\alpha$ and outer radius $s\alpha - r$ and a similar
computation yields $P^\delta(\mathcal{C}^2_{\pi+\alpha}(r,1)) \leq c(r +
\alpha \log \alpha^{-1})$ in this case as well.

\begin{pictures}
\begin{figure}[h]
\begin{center}
\includegraphics{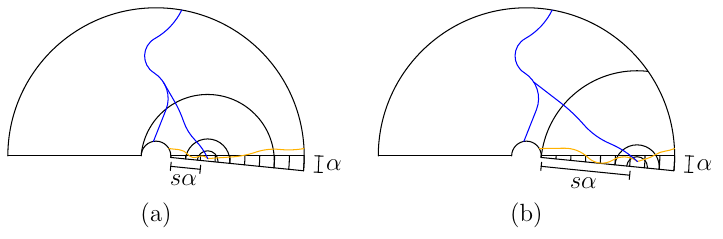}
\begin{minipage}{\linewidth}
  \caption{The cases (a) $s\alpha \leq (1-r)/2$ and (b) $s\alpha > (1-r)/2$
    for the event $\mathcal{C}_{\pi+\alpha}^2(r,R)\setminus
    \mathcal{C}_\pi^2(r,R)$ in
    Proposition~\ref{prop:cont_exp}.}  \label{fig:sector}
\end{minipage}
\end{center}
\end{figure}
\end{pictures}

Finally, to show that $\delta_0$ may be taken to be independent of $r$ and $R$, we
apply a multiplicative argument. Let $K>0$ be large enough and $\delta_0$
small enough that $P^\delta(\mathcal{C}_\pi^2(r,R))<(1/K)^{1-\eps}$
for all $0<\delta<\delta_0$. Insert concentric arcs of radii $r, rK, rK^2,
\ldots, r K^{\lfloor \log_K(R/r)\rfloor }$ between the arcs of radii $r$ and $R$,
and consider the regions between successive pairs of these arcs. Since a
crossing from the arcs of radius $r$ to the arc of radius $R$ implies that
each of these regions is crossed, we have
\begin{align*}
  P^\delta(\mathcal{C}^2_{\pi+\alpha}(r,R)) &\leq
  \prod_{k=1}^{\lfloor \log_K(R/r)\rfloor } P^\delta\left(\mathcal{C}_{\pi+\alpha}^2\left(rK^k,rK^{k+1}\right)\right)\\
  &\leq C \left(\frac{r}{R}\right)^{1-\eps}. \qedhere
\end{align*}
\end{proof}

\begin{remark}
  In particular, by taking $r = \delta$, the previous results yields bounds
  for half-disk crossing probabilities for $z \in \partial \Omega$.
\end{remark}

Using Smirnov's theorem, we can generalize one-arm estimates to annulus
sectors of any angle.

\begin{prop} \label{prop:wedgeonearm}
For every $\eps>0$,
\begin{equation} \label{wedgeonearmeq}
P^\delta(\mathcal{C}_{\theta}^1(r,R)) \lesssim
\left(\frac{r}{R}\right)^{\frac{1}{3\theta}-\eps}.
\end{equation}
\end{prop}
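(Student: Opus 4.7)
The plan is to iterate a crossing estimate at a fixed aspect ratio, using Theorem~\ref{thm:cardyrate} to identify the limiting crossing probability with the Cardy--Smirnov conformal invariant. Fix $K > 1$ to be chosen later, and partition $A_\theta(r, R)$ into annular sub-sections $A_\theta(rK^j, rK^{j+1})$ for $j = 0, 1, \ldots, \lfloor \log_K(R/r) \rfloor - 1$. A one-arm crossing of $A_\theta(r, R)$ forces a one-arm crossing of each sub-section, and these events depend on disjoint sets of sites; hence by independence,
\[
P^\delta(\calC^1_\theta(r, R)) \leq \prod_{j} P^\delta\bigl(\calC^1_\theta(rK^j, rK^{j+1})\bigr),
\]
so the problem reduces to a uniform bound on $P^\delta(\calC^1_\theta(1, K))$, each factor being a scaled copy.

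Next I would view $A_\theta(1, K)$ as a piecewise analytic four-pointed Jordan domain with the four corners as marked points; the event $\calC^1_\theta(1, K)$ is then the Cardy crossing event from the inner arc to the outer arc, and Theorem~\ref{thm:cardyrate} gives $P^\delta(\calC^1_\theta(1, K)) = p(K, \theta) + O(\delta^{c'})$ for some $c' > 0$, where $p(K,\theta)$ is the limiting conformal invariant. To control $p(K, \theta)$ as $K \to \infty$, I use the conformal equivalence $z \mapsto \log z$ between $A_\theta(1, K)$ and the rectangle $[0, \log K] \times [0, \theta]$ together with the standard asymptotics of Cardy's formula on long thin rectangles; equivalently, $z \mapsto z^{\pi/\theta}$ maps the sector to the upper half-plane and reduces the question to the known half-plane one-arm exponent $1/3$. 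Either approach yields $p(K, \theta) \leq K^{-1/(3\theta) + o_K(1)}$ as $K \to \infty$. Combining with the product above and choosing $K$ large depending on $\eps$ gives the stated estimate.

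The main obstacle is the asymptotic computation of $p(K, \theta)$. While the exponent is dictated by conformal invariance through Theorem~\ref{thm:cardyrate}, one must (i) control the $O(\delta^{c'})$ error uniformly in $K$ and in the scale of each sub-section, (ii) handle the branch-point behavior of the conformal map at the two corners on the real axis, which requires boundary estimates in the spirit of Proposition~\ref{prop:boundary}, and (iii) ensure that $\delta$ is small enough compared to the smallest scale $r$ so that the accumulated additive error does not spoil the multiplicative decay. The iteration itself, once the fixed-aspect-ratio estimate is in hand, is straightforward.
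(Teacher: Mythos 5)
Your overall structure --- a fixed-aspect-ratio crossing estimate obtained from Cardy's formula, iterated multiplicatively across concentric annular shells to remove the dependence on $r$ and $R$ --- is exactly the paper's argument. However, your invocation of Theorem~\ref{thm:cardyrate} to supply the base estimate is circular: Proposition~\ref{prop:wedgeonearm} feeds into Corollary~\ref{cor:smoothtwothree}, which is used in Lemmas~\ref{lem:a_outside} and \ref{lem:a_multiscale}, which in turn prove Theorem~\ref{thm:general} and hence Theorem~\ref{thm:cardyrate}. You cannot use the rate-of-convergence result here.

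The fix is the one the paper uses: you need only Smirnov's original theorem (convergence of $P^\delta$ to the Cardy--Smirnov conformal invariant, with no rate), which is available at this point. For each fixed $K$ and sector angle, Smirnov's theorem gives a $\delta_0(K,\eps)$ below which $P^\delta(\calC^1_\theta(1,K))\leq K^{-1/(3\theta)+\eps}$, and that is all the multiplicative iteration requires --- the factor $j$ has effective mesh $\delta/(rK^j)$, which is smallest for large $j$, and the finitely many small-$j$ factors that fall above $\delta_0$ can simply be bounded by $1$, contributing a constant depending only on $K$ and $\delta_0$ (hence on $\eps$). Once this substitution is made, your ``obstacles'' (i) and (ii) dissolve: no uniform control of a $\delta^{c'}$ error is needed, and no boundary regularity in the spirit of Proposition~\ref{prop:boundary} is needed, because Smirnov's theorem holds for general Jordan domains regardless of corner behavior. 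Your obstacle (iii) is real but is handled as just described.
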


\begin{proof}
  Smirnov's theorem implies that for all $r$ and $R$ there exists
  $\delta_0=\delta_0(r,R,\eps)>0$ so that for all $0<\delta<\delta_0$, we
  have $P^\delta(\mathcal{C}_\theta^1(r,R))\leq
  (r/R)^{1/3\theta-\eps}$. As in the previous proposition, we can
  remove the dependence on $r$ and $R$ with a multiplicative argument.
\end{proof}

We can generalize the previous results for annular regions to a
neighborhood of a meeting point of two analytic arcs. We let
$\mathcal{C}_{\Omega,z}^k(r,R)$ denote the event that there exist $k$
disjoint crossings of alternating color contained in $\Omega$ and
connecting the circles of radius $r$ and $R$ centered at $z$. We have the
following corollary of Propositions~\ref{prop:cont_exp} and
\ref{prop:wedgeonearm}.

\begin{corollary}\label{cor:smoothtwothree} Let $\eps>0$, let
  $\alpha=\alpha(\eps)$ be an angle satisfying the conclusion in
  Proposition~\ref{prop:cont_exp}. Let $\Omega$ be a piecewise analytic
  Jordan domain in $\bbR^2$. Fix $z \in \partial \Omega$ and suppose that
  $z$ is not a corner of $\Omega$. Let $R_0=R_0(z,\eps) > 0$ be
  sufficiently small that $B_{R_0}(z)\cap \Omega$ is contained in a sector
  centered at $z$ and having angle $\pi+\alpha$ and radius $R_0$. Then for
  all $k\in \{1,2,3\}$ and for all $0 < r < R \leq R_0$,
\begin{equation} \label{eqn:domaincrossing}
P^\delta(\mathcal{C}^k_{\Omega,z}(r,R)) \lesssim \left(\frac{r}{R} \right)^{k(k+1)/6 - \eps},
\end{equation}
with implied constants depending only on $\eps$.
\end{corollary}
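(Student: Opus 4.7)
The plan is to reduce the domain-crossing event to a sector-crossing event and then invoke the two preceding propositions. By hypothesis, $B_{R_0}(z)\cap\Omega$ is contained in a sector $S$ of angle $\pi+\alpha$ and radius $R_0$ centered at $z$. Any configuration of $k$ disjoint alternating-color crossings contained in $\Omega$ and joining the circles of radii $r$ and $R$ about $z$ must therefore lie in $S$, and its endpoints, lying on these two circles, are on the inner and outer circular arcs of the corresponding annular sector $A_{\pi+\alpha}(r,R)$. Using rotational symmetry of the crossing probabilities in the two auxiliary propositions, we can align the sector in the definition of $A_{\pi+\alpha}(r,R)$ with $S$ and obtain the containment
\[
\mathcal{C}^k_{\Omega,z}(r,R)\ \subseteq\ \mathcal{C}^k_{\pi+\alpha}(r,R),
\]
so it suffices to bound the right-hand side.

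For $k=2$ and $k=3$, Proposition~\ref{prop:cont_exp}, applied with the given $\epsilon$ and with the $\alpha$ prescribed in the hypothesis, directly yields exponents $1-\epsilon$ and $2-\epsilon$, which are exactly $k(k+1)/6 - \epsilon$. For $k=1$, Proposition~\ref{prop:wedgeonearm} applied with $\theta = \pi+\alpha$ gives a one-arm exponent that approaches $1/3$ as $\alpha\to 0$, so by taking $\alpha$ sufficiently small (possibly smaller than what Proposition~\ref{prop:cont_exp} alone requires) we obtain the target exponent $1/3-\epsilon = k(k+1)/6 - \epsilon$ in this case as well.

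The only piece of bookkeeping is to pick a single $\alpha$ that serves uniformly for $k\in\{1,2,3\}$: one simply takes the minimum of the angles produced by the three applications of Propositions~\ref{prop:cont_exp} and~\ref{prop:wedgeonearm} for the given $\epsilon$, and shrinks $R_0$ if necessary so that the containment $B_{R_0}(z)\cap\Omega\subseteq S$ is preserved. Because $z$ is an interior point of an analytic boundary arc (and hence has a well-defined tangent), the shape of $\Omega$ near $z$ is, up to $O(|w-z|^2)$ error, a half-plane, so such an $R_0$ exists for any prescribed $\alpha>0$. I do not anticipate a substantial obstacle here; all of the probabilistic content has been absorbed into the two cited propositions, and what remains is purely a geometric reduction plus a choice of constants.
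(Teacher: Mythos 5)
Your proof is essentially the same as the paper's: you observe that any $k$-arm crossing inside $\Omega$ near $z$ is automatically a $k$-arm crossing of a sector of angle $\pi+\alpha$, so $\mathcal{C}^k_{\Omega,z}(r,R)\subseteq\mathcal{C}^k_{\pi+\alpha}(r,R)$, and then invoke Proposition~\ref{prop:cont_exp} for $k\in\{2,3\}$ and Proposition~\ref{prop:wedgeonearm} for $k=1$. The extra bookkeeping you supply about choosing a single $\alpha$ uniformly over $k$ (and shrinking $R_0$ accordingly) is a fair point that the paper leaves implicit, but it does not change the substance of the argument.
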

\begin{proof}
Since the event $\mathcal{C}^k_\Omega(r,R)$ implies a crossing of a sector
of angle $\pi+\alpha$ with inner and outer radii of $r$ and $R$,
\[
P^\delta(\mathcal{C}^k_\Omega(r,R)) \leq P^\delta(\mathcal{C}_{\pi + \alpha}^k(r,R))
\]
and we can estimate the probability on the right by
Proposition~\ref{prop:cont_exp} for $k\in \{2,3\}$ or
Proposition~\ref{prop:wedgeonearm} for $k=1$.
\end{proof}

We conclude this section by recording a generalization of the previous
corollary for corners $z\in \partial \Omega$. The proof of this proposition
uses convergence of the exploration path to $\SLE_6$. We know how to remove
this dependence on SLE results only when $k=1$, where Smirnov's theorem
suffices.  We use \eqref{eqn:wedgemultiarmeq} when $k\in \{2,3\}$ only to
handle the case where $\Omega$ has reflex angles and to obtain the sharp
exponent discussed in Remark~\ref{rem:sharp_exponent}.

\begin{prop} \label{cor:smoothwedgemultiarm} Suppose that $z\in \partial
  \Omega$ is a corner of $\Omega$, but otherwise the hypotheses and
  variable definitions are the same as in Corollary~\ref{cor:smoothtwothree}.
  Then the conclusion holds, with \eqref{eqn:domaincrossing} replaced by
\begin{align} \label{eqn:wedgemultiarmeq}
P^\delta(\mathcal{C}_{\Omega, z}^k(r,R)) \lesssim
\left(\frac{r}{R}\right)^{k(k+1)/12\theta - \eps},
\end{align}
where $2\pi \theta$ is the angle formed by $\partial \Omega$ at $z$.
\end{prop}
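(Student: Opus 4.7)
The plan is to reduce to the straight-wedge case by sandwiching $\Omega$ near $z$ between sectors, as in the proof of Corollary~\ref{cor:smoothtwothree}, and to treat $k=1$ and $k\in\{2,3\}$ separately. By analyticity of the two boundary arcs meeting at $z$ with interior angle $2\pi\theta$, for every $\alpha>0$ we may shrink $R_0$ so that $B_{R_0}(z)\cap\Omega$ lies in a sector at $z$ of angle $2\pi\theta+\alpha$. A $k$-arm crossing of $\Omega$ between the circles of radius $r$ and $R$ is then also a $k$-arm crossing of that sector. For $k=1$, Proposition~\ref{prop:wedgeonearm} yields
\[
P^\delta(\mathcal{C}^1_{\Omega,z}(r,R)) \lesssim \left(\frac{r}{R}\right)^{\frac{1}{6\theta+O(\alpha)}-\epsilon},
\]
and taking $\alpha$ small absorbs the $O(\alpha)$ error into $\epsilon$, yielding the desired exponent $k(k+1)/(12\theta)=1/(6\theta)$.

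For $k\in\{2,3\}$ the needed additional input is an analogue of Proposition~\ref{prop:cont_exp} for the $k$-arm crossing event in a straight sector of angle $\phi\in(0,2\pi]$, namely
\[
P^\delta(\mathcal{C}^k_\phi(r,R)) \lesssim \left(\frac{r}{R}\right)^{k(k+1)\pi/(6\phi)-\epsilon}.
\]
Here $\pi/\phi$ is the scaling factor under the conformal map $w\mapsto w^{\pi/\phi}$ sending the sector of angle $\phi$ to the half-plane, and when combined with the half-plane polychromatic $k$-arm exponent $k(k+1)/6$ it gives the predicted wedge $k$-arm exponent. For a single fixed ratio $R/r$, this bound follows from convergence of the percolation exploration path to $\SLE_6$ proved in \cite{Smirnov, CN}, together with the corresponding $\SLE_6$ wedge crossing-probability calculation. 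The multiplicative chaining argument at the end of the proof of Proposition~\ref{prop:cont_exp} then removes the dependence of $\delta_0$ on $r$ and $R$. Sandwiching $B_{R_0}(z)\cap\Omega$ inside a sector of angle $2\pi\theta+\alpha$ and letting $\alpha\to 0$ yields the proposition once one notes that $k(k+1)\pi/(6(2\pi\theta+\alpha))\to k(k+1)/(12\theta)$.

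The main obstacle is the single-scale wedge $k$-arm estimate for $k\in\{2,3\}$: it requires the full $\SLE_6$ scaling-limit machinery and the identification of the wedge $k$-arm exponent via an $\SLE_6$ martingale computation (or, alternatively, by applying the conformal covariance of the continuum limit to the known half-plane exponent). Every other step—the sandwiching by sectors, the $k=1$ case, and the multiplicative chaining—is a direct adaptation of the arguments already developed in Section~\ref{sect:prelim}. This is consistent with the remark preceding the proposition that $\SLE$ is invoked here only to handle corners (and, indirectly, the sharp exponent of Remark~\ref{rem:sharp_exponent}).
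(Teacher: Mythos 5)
Your argument follows essentially the same route as the paper. The paper (citing \cite{SW}) takes the half-plane polychromatic $k$-arm exponent $k(k+1)/6$ derived from $\SLE_6$ convergence, transfers it to the sector of angle $2\pi\theta$ via the conformal map $z\mapsto z^{2\theta}$ to get exponent $k(k+1)/12\theta$, uses the submultiplicativity $a^\theta_{k,\delta}(r,r'')\leq a^\theta_{k,\delta}(r,r')\,a^\theta_{k,\delta}(r',r'')$ (the same device as your ``multiplicative chaining'') to pass from fixed ratio to all scales, and finally sandwiches the analytic corner inside a slightly larger straight sector exactly as in Corollary~\ref{cor:smoothtwothree}; your separation of $k=1$ from $k\in\{2,3\}$ also matches the remark preceding the proposition about where the $\SLE$ dependence can be avoided.
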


\begin{proof}
  Define $a_{k,\delta}^\theta(r,R)$ to be the probability of $k$ disjoint crossings
  of alternating color from inner to outer radius in $\{z\,:\, \arg z \in
  (0,2\pi \theta)\text{ and }r < |z|<R\}$. In \cite{SW}, it is shown that
\begin{equation} \label{eqn:sle_exponents}
\lim_{\delta \to 0} a_{k,\delta}^{1/2}(1,R) = R^{-k(k+1)/6+o(1)},
\end{equation}
using the convergence of the percolation exploration path to $\SLE_6$. By
the invariance of the law of $\SLE_6$ under the conformal map $z\mapsto
z^{2\theta}$, we conclude that \eqref{eqn:sle_exponents} generalizes
to
\[
\lim_{\delta \to 0} a_{k,\delta}^\theta(1,R) = R^{-k(k+1)/12\theta+o(1)}.
\]
The following multiplicative property is also used in \cite{SW}: for all
$k<r\leq r'\leq r''$, we have
\begin{equation} \label{eqn:submult}
a^{1/2}_{k,\delta}(r,r'') \leq a^{1/2}_{k,\delta}(r,r')a^{1/2}_{k,\delta}(r',r'').
\end{equation}
This inequality still holds with $1/2$ replaced by $\theta$. The proof in
\cite{SW} for the case $\theta = 1/2$ relies only on these two facts and
therefore generalizes to \eqref{eqn:wedgemultiarmeq} for the sector domain
$\{z\,:\, \arg z \in (0,2\pi\theta)\}$. The extension of this result to
piecewise real-analytic Jordan domains with positive interior angles is
obtained by following the same argument carried out in
Corollary~\ref{cor:smoothtwothree} for $\theta = 1/2$.
\end{proof}

\section{Proof of Main Theorem} \label{mainthm}

\subsection{Background and set-up}
We begin by recalling few definitions and facts from complex analysis and
differential geometry. See \cite{Ahlfors}, \cite{S86}, and \cite{L03} for more
details. If $a,b\in\bbC$ are linearly independent over $\bbR$ and $P$
is a parallelogram with vertex set $\{0,a,b,a+b\}$, then
a function $f:P\to \bbC\cup\{\infty\}$ is said to be doubly-periodic if
$f(z+a)=f(z)$ for $z$ on the segment from 0 to $b$ and
$f(z+b)=f(z)$ for all $z$ on the segment from 0 to $a$. If $f$ is
continuous, then such a function may be extended by periodicity to a
continuous function defined on $\bbC$. An elliptic function is a
doubly-periodic function whose extension to $\bbC$ is analytic outside of a
set of isolated poles. Given distinct points $p_1,p_2\in P$,
there exists an elliptic function $f$ with simple poles at $p_1,p_2$ (and
no other poles) \cite[Proposition~3.4]{S86}. One way to obtain such a function
is to define the Weierstrass product
\[
\sigma(\zeta) = \zeta \prod_{\substack{(j,k)\in \bbZ^2 \\ (j,k)\neq (0,0)}} \left(1-\frac{\zeta}{a
    j + b k}\right) \exp\left(\frac{\zeta}{a
    j + b k} + \frac{\zeta^2}{2(a
    j + b k)^2}\right)
\]
and set
\begin{equation} \label{eq:poles}
f(\zeta) = \frac{\sigma((\zeta-(p_1+p_2)/2))^2}{\sigma(\zeta-p_1)\sigma(\zeta-p_2).}
\end{equation}

We recall the definitions of the differential forms $d\zeta = dx + i\, dy$
and $d\overline{\zeta}=dx - i\, dy$. Note that $d\bar \zeta \wedge d\zeta =
2i dA$, where $dA$ is the two-dimensional area measure and $\wedge$ is the
usual wedge product. Recall that the exterior derivative $d$ maps $k$-forms
to $(k+1)$-forms and satisfies
\begin{equation} \label{eq:diff_properties}
df=\partial f \,d\zeta + \bar\partial fd\bar\zeta, \text{ and } d(f d\zeta) =
df\wedge d\zeta
\end{equation}
for all smooth functions $f$.

Let $\phi:(\Omega,x(1),x(\tau),x(\tau^2)) \to (T,1,\tau,\tau^2)$ be the
unique conformal map from $\Omega$ to the equilateral triangle $T$ with
vertices $1,\tau,$ and $\tau^2$ which maps $x(\tau^k)$ to $\tau^k$ for
$k\in\{0,1,2\}$. Let $\delta>0$ be small and define $\Omega_{\text{edge}}$
to be such that $\Omega\setminus \Omega_{\text{edge}}$ is the set of all
hexagonal faces of $(\Omega_\delta)^*$ completely contained in
$\Omega$. Let $T_{\text{edge}}$ be the image of $\Omega_{\text{edge}}$
under $\phi$.

We modify $G^\delta$ to obtain a function $\tilde{G}^\delta$ for which the
lattice points on the boundary of $\Omega\setminus \Omega_{\text{edge}}$
are mapped to the boundary of $T$. Specifically, we set
\[
\tilde G^\delta(z) = \begin{cases}
\tau^k & z\text{ is adjacent to } x(\tau^k) \\
\proj\left(G^\delta(z),[\tau^k,\tau^{k+1}]\right) & \text{if }z\text{ is not
  adjacent to } x(\tau^k) \\
 &
\text{\hspace{1cm}but is adjacent to }[x(\tau^{k+1}),x(\tau^{k+2})] \\
G^\delta(z) & \text{otherwise,}
\end{cases}
\]
where we are using the notation $\proj(z,L)$ for the projection of a
complex number $z$ onto the line $L\subset \bbC$.  Now linearly interpolate
to extend $\tilde{G}^\delta$ to a function on $\Omega$, and define $J:T\to
T$ by $J(w) = \tilde{G}^\delta(\phi^{-1}(w))$.

Schwarz-reflect 17 times to extend $J$ to the parallelogram $P$ in
Figure~\ref{fig:parallelogram}. For example, if $r$ is the reflection across
the line through $1$ and $\tau$, then for $w$ in the triangle $r(T)$, we
define $J(w)=r\circ J \circ r(w)$. Define an elliptic function $g_w$ via
\eqref{eq:poles} with period parallelogram $P$ and poles at
$p_1=w_0\colonequals (1+\tau+\tau^2)/3$ and $p_2=w$ varying over the grey
triangle $K$ in Figure \ref{fig:parallelogram}.

\begin{pictures}
\begin{figure}[h]
\begin{center}
\includegraphics{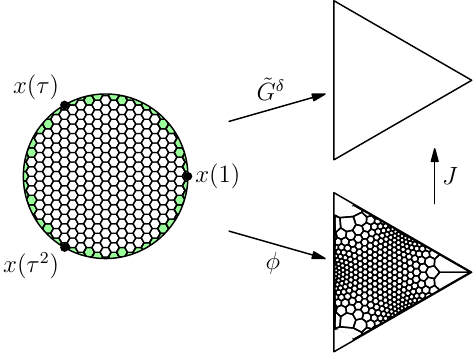}

\begin{minipage}{\linewidth}
  \caption{The function $J$ is defined as the composition of
    $\tilde{G}^\delta$ with the inverse of the Riemann map from $\Omega$ to
    the triangle. The region $T_{\text{edge}}$ is the image under the
    conformal map $\phi$ from $\Omega$ to $T$ of the region
    $\Omega_{\text{edge}}$, shown in green.} \label{Jfig}
\end{minipage}
\end{center}
\end{figure}
\end{pictures}

We will also need a result from the theory of Sobolev spaces. If $U\subset
\bbR^2$ is a bounded domain, and $1\leq p < \infty$, we define the Sobolev
space $W^{1,p}(U)$ to be the set of all functions $u:U\to \bbR$ such that the
weak partial derivatives of $u$, $\frac{\partial u}{\partial x}$, and
$\frac{\partial u}{\partial y}$ are in $L^p(U)$; see \cite{Evans} for more
details. We equip $W^{1,p}(U)$ with the norm
\[\|u\|_{W^{1,p}}\colonequals
\left\|u\right\|_{L^p(U)}+\left\|\frac{\partial u}{\partial
    x}\right\|_{L^p(U)}+\left\|\frac{\partial u}{\partial
    y}\right\|_{L^p(U)}.\] Denote by $\id$ the identity function from $P$
to $P$, and define $C^\infty(P)$ to be the set of smooth, real-valued
functions from $P$. Since $J$ is piecewise-affine on $P$, the real and
imaginary parts of $J$ are in $W^{1,1}(P)$.  Since $J$ is defined so that
$J:T\to T$ takes vertices to vertices and boundary segments to boundary
segments, $J-\id$ is continuous and doubly-periodic.  Since smooth
functions are dense in $W^{1,1}(P)$ and $L^\infty(P)$ \cite{Evans}, for
each $\eps>0$ we obtain a pair of smooth functions $Q_1,Q_2 \in
C^\infty(P)$ such that
 \begin{align} \nonumber
  | Q(w) - (J(w) - w)| &< \eps \text{ for all }w\in P,\\ \label{eq:smooth}
   \|Q_1 - \Re(J - \text{id})\|_{W^{1,1}}  &< \eps,\text{ and} \\ \nonumber
   \|Q_2 - \Im(J - \text{id})\|_{W^{1,1}}  &< \eps,
 \end{align}
 where $Q = Q_1 + iQ_2$ (for see \cite{Evans} \S 5.3.3 and \S C.5, for
 example). Defining $Q_1$ and $Q_2$ to be bump function convolutions, we
 arrange for $Q_1$ and $Q_2$ to inherit periodicity from $J-\id$.  We note
 that by choosing $\eps$ sufficiently small in \eqref{eq:smooth}, we can
 for every $\eps'>0$ choose $Q$ so that
\begin{equation} \label{eq:approxg}
\int_P \left|\partial Q - \partial(J-\id)\right|\left| g_w\right| \,dA < \eps',
\end{equation}
where $dA$ refers to two-dimensional Lebesgue measure. One way to see
this is to define $f(z)=\partial Q(z) - \partial(J(z) - z)$ and note that
for $R>0$, we have
\begin{align} \label{eq:split}
\|fg\|_{L^1} & \leq  R \|f\|_{L^1} + \|f\|_{L^\infty}\|\mathbf{1}_{\{|g| > R\}} g\|_{L^1}.
\end{align}
By the dominated convergence theorem, we may choose $R$ sufficiently large
that the second term on the right-hand side is less than $\eps'/2$. Once $R$
is chosen, we may choose $Q$ so that $\|f\|_{L_1}\leq \eps'/(2R)$, by
\eqref{eq:smooth}. Then \eqref{eq:approxg} follows from \eqref{eq:split}.

\begin{pictures}
\begin{figure}[h]
\begin{center}
\includegraphics{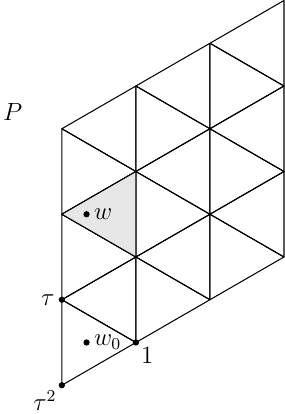}
\begin{minipage}{\linewidth}
  \caption{We extend $J(w)$ to a function on the parallelogram $P$, which
    is a union of 18 small triangles. The elliptic function $g_w:P \to
    \bbChat$ has poles at $w_0$ fixed and $w$ varying in the gray
    region.} \label{fig:parallelogram}
\end{minipage}
\end{center}
\end{figure}
\end{pictures}

\subsection{Proof of main theorems}

\begin{proof}[Proof of Theorem \ref{thm:general}]
  The following calculation is similar to the proof of the Cauchy integral
  formula, but with two key changes: we keep track of the $\bar\partial $
  term, and we use the elliptic function $g_w$ in place of the usual kernel
  $\zeta\mapsto 1/\zeta$. Choose $r>0$ sufficiently small that the balls
  $B_1$ and $B_2$ of radius $r$ around $w_0$ and $w$ are disjoint, and
  apply Stokes' theorem to the region $ P\setminus (B_1\cup B_2) $ to
  obtain that for smooth, complex-valued, periodic functions $Q$ on $P$, we
  have
\begin{align*}
  -\int_{|\zeta-w|= r} Q(\zeta) g_w(\zeta)\,d\zeta-&\int_{|\zeta-w_0|=
  r} Q(\zeta) g_w(\zeta)\,d\zeta = \int_{P\setminus(B_1 \cup B_2)} d(Q g_w d\zeta).
\end{align*}
Note that the integral around $\partial \Omega$ vanishes by
periodicity. Applying \eqref{eq:diff_properties} and the product rule, we
obtain
\begin{align*}
  \int_{P\setminus(B_1 \cup B_2)} d(Q g_w d\zeta) &= \int_{P\setminus(B_1
    \cup B_2)} \left[\struts{6}(\partial Q d\zeta + \bar\partial Q
    d\bar\zeta) g_w + (\partial g_w d\zeta + \bar\partial g_w
    d\bar\zeta) Q \right]  \wedge d\zeta \\
  &= \int_{P\setminus(B_1 \cup B_2)} \bar\partial Q(\zeta) g_w(\zeta)
  \,d\bar\zeta \wedge d\zeta.
\end{align*}
Let $Q$ be a smooth, complex-valued, periodic function on $P$ such that
\eqref{eq:smooth} and \eqref{eq:approxg} are satisfied with $\eps = \eps' =
\delta^{100}$, say.  Since $Q$ is bounded and $g$ has an integrable pole at
$\zeta$, we can take $r\to 0$ and apply the dominated convergence
theorem. We obtain
\begin{equation} \label{Stokes}
2\pi iQ(w) \Res (g_w,w) + 2\pi iQ(w_0) \Res (g_w,w_0) =
2i \int_P \bar \partial Q(\zeta) g_w(\zeta)\, dA(\zeta),
\end{equation}
where $dA(\zeta) = dx \, dy$ is notation for the area differential. The key
step of the proof is to bound the right-hand side of (\ref{Stokes}) by
$O(\delta^c)$. To do this, we first consider $J$ in place of $Q$, and we
estimate the integral over the regions $T\setminus T_{\text{edge}}$ and
$T_{\text{edge}}$ separately. We postpone the details of these calculations
to the following section, along with stronger lemma statements (Lemmas
\ref{lem:a_outside} and \ref{lem:a_multiscale}).
\begin{lemma} \label{lem:outside} There exists $c>0$ so that
\begin{equation} \label{gdA}
\int_{T_\text{edge}} \bar \partial J(\zeta) g_w(\zeta) dA(\zeta)
 = O(\delta^{c}),
\end{equation}
where the implied constants depend only on the three-pointed domain.
\end{lemma}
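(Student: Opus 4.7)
The plan is to bound the integrand pointwise and then integrate. Both poles $w_0$ and $w$ of the elliptic function $g_w$ lie in the compact set $\{w_0\}\cup K$, which is strictly contained in the interior of $T$. Once $\delta$ is small enough that $T_{\text{edge}}$ is disjoint from a fixed neighborhood of this set, $|g_w(\zeta)|\le M$ uniformly for $\zeta\in T_{\text{edge}}$ and for all admissible choices of the second pole, where $M$ depends only on the three-pointed domain. It therefore suffices to bound $\int_{T_{\text{edge}}}|\bar\partial J|\,dA$.

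Next, since $J$ is piecewise affine on the $\phi$-image of the triangulated hexagonal lattice, $\bar\partial J$ is piecewise constant, with magnitude on each triangle controlled by $\max_{i,j}|J(v_i)-J(v_j)|$ divided by the triangle diameter. Let $c_H>0$ denote the H\"older exponent from Lemma~\ref{lem:Holder}, so that $|G^\delta(v)-G^\delta(v')|\lesssim \delta^{c_H}$ for neighboring lattice vertices. The projection step that converts $G^\delta$ to $\tilde G^\delta$ moves each boundary-adjacent value by at most $\lesssim \delta^{c_H}$, by Proposition~\ref{prop:boundary}(iii), so the same H\"older estimate transfers to $J$. On the portion of $T_{\text{edge}}$ lying at a fixed positive distance from the $\phi$-images of all corners of $\Omega$, the map $\phi$ is bilipschitz, so image triangles have diameter $\asymp\delta$, the total area is $\lesssim\delta$, and the contribution to the integral is $\lesssim \delta^{c_H-1}\cdot\delta = O(\delta^{c_H})$.

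The main obstacle is the portion of $T_{\text{edge}}$ near the $\phi$-image of each corner, where $\phi$ fails to be bilipschitz: at a corner with interior angle $2\pi\alpha$, $\phi$ behaves like $z\mapsto z^{1/(2\alpha)}$, so at reflex angles ($\alpha>1/2$) the image of a $\delta$-neighborhood in $\Omega$ can be much larger than $\delta$ in $T$, while at acute angles the image triangulation becomes extremely refined. To handle this, I would decompose each such corner neighborhood into dyadic annular sectors around the image corner and bound the contribution on each sector by combining distortion estimates for $\phi$ with Proposition~\ref{prop:boundary}(iii), which gives $\dist(\tilde G^\delta,\partial T)$ decaying as a power of $\dist(\cdot,\partial\Omega)$. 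The dyadic contributions form a convergent geometric series of order $\delta^{c_\alpha}$ at each corner, with $c_\alpha>0$ depending on $\alpha$. Taking $c$ to be the minimum of $c_H$ and all such $c_\alpha$ over the finitely many corners yields the desired bound $O(\delta^c)$, in agreement with the sharpness discussion of Remark~\ref{rem:sharp_exponent}.
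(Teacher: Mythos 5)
Your route is genuinely different from the paper's, and it has a real gap. The paper's proof of Lemma~\ref{lem:outside} (carried out in the sharper Lemma~\ref{lem:a_outside}) does not use the H\"older estimate of Lemma~\ref{lem:Holder} at all. Instead it bounds $\bar\partial \tilde G^\delta$ in the $k$-th boundary cell by the arm-event probability $(k\delta)^{1/2\beta-\eps}(1/k)^{1-\eps}$, using the two-arm half-plane and two-arm $\beta$-annulus exponents from Propositions~\ref{prop:cont_exp}--\ref{cor:smoothwedgemultiarm}. Crucially, this bound decays as $k$ grows, which is what produces the sharp exponent $\min(1/6\alpha_i,1/2\beta_j)$ needed for Remark~\ref{rem:sharp_exponent} and ultimately Theorem~\ref{thm:halfarm}. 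Your H\"older bound $|\bar\partial J|\lesssim \delta^{c_H-1}/(\text{cell diameter})$ has no such decay in $k$ and would at best yield the much weaker exponent $c_H$ coming from RSW. For the bare statement of Lemma~\ref{lem:outside}, a weaker exponent is acceptable in principle, so I will focus on whether your argument closes.

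The step that fails as written is the claim that $|g_w(\zeta)|\le M$ uniformly on $T_{\text{edge}}$. The pole $w$ ranges over the closed gray triangle $K$, and $K$ necessarily contains a vertex of $T$: the argument immediately following the two lemmas chooses $w$ to be a vertex of $K$ at which $J(w)-w=0$, and the only fixed points of $J$ are the vertices of $T$ (and their Schwarz reflections). So $w$ can sit on $\partial T$, $g_w$ blows up along $T_{\text{edge}}$ near that vertex, and the reduction to bounding $\int_{T_{\text{edge}}}|\bar\partial J|\,dA$ alone does not go through; this is precisely why the paper's computation carries an explicit factor $(k\delta)^{-1/2\beta}$ (resp. $(k\delta)^{-1/6\alpha}$) for $g_w$. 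Relatedly, the assertion that the corner contributions ``form a convergent geometric series'' is not right: if one redoes your dyadic computation including the $g_w$ singularity at the vertex, the per-scale contribution is $\asymp\delta^{c_H}k^{-1}$, a harmonic series, giving an extra $\log(1/\delta)$. That log factor is harmless for a power law, but it means your geometric-series claim and the resulting bookkeeping need to be repaired. In short: the reduction away from $g_w$ is unjustified and likely false, and the corner analysis is left as a sketch whose quantitative claim is incorrect; both would need to be fixed before this could stand as a proof, even of the weak version of the statement.
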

\begin{lemma} \label{lem:multiscale} There exists $c>0$ so that
\begin{equation} \label{gdA2}
\int_{T\setminus T_{\text{edge}}} \bar \partial J(\zeta)g_w(\zeta)
  dA(\zeta)  =O( \delta^c),
\end{equation}
where the implied constants depend only on the three-pointed domain.
\end{lemma}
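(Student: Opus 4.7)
The plan is to transfer the integral from $T\setminus T_{\text{edge}}$ to $\Omega\setminus\Omega_{\text{edge}}$ via the conformal map $\phi$, bound $|\bar\partial G^\delta(z)|$ pointwise using Proposition~\ref{prop:CR}, and then control the result with a dyadic decomposition by distance to $\partial\Omega$. Since $\phi^{-1}$ is holomorphic, the chain rule gives $\bar\partial J(\phi(z))=\bar\partial G^\delta(z)/\overline{\phi'(z)}$, and $dA(\phi(z))=|\phi'(z)|^2\,dA(z)$, so the left-hand side of (\ref{gdA2}) rewrites as
\[\int_{\Omega\setminus\Omega_{\text{edge}}}\bar\partial G^\delta(z)\,g_w(\phi(z))\,\phi'(z)\,dA(z).\]
The weight $|g_w\circ\phi|\,|\phi'|$ is integrable on $\Omega$: the simple poles of $g_w$ at $w$ and $w_0$ produce integrable $1/|\cdot|$-type singularities in the interior of $T$, and $|\phi'|$ is bounded on compact subsets of $\Omega$, with at worst an integrable power-law singularity near corners.

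Next, Proposition~\ref{prop:CR} yields $|\bar\partial G^\delta(z)|\lesssim \delta^{-1}\max_{k,l,\sigma}\mathbb{P}(E^{\text{five arm}}_{\tau^k,\tau^l,\sigma}(z))$. Set $d(z)\colonequals\dist(z,\partial\Omega)$. The key estimate I aim to prove is
\begin{equation}\label{eq:fivearmbd}
\mathbb{P}\bigl(E^{\text{five arm}}_{\tau^k,\tau^l,\sigma}(z)\bigr)\lesssim\left(\frac{\delta}{d(z)}\right)^{2} d(z)^{\alpha}
\end{equation}
for some $\alpha>0$ depending only on $\Omega$. The first factor is Kesten's universal whole-plane polychromatic $5$-arm estimate (exponent $2$), applied inside the disk $B_{d(z)}(z)\subset\Omega$, which is valid since the five arms are alternating in color near $z$. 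The additional decay $d(z)^{\alpha}$ comes from the macroscopic constraint that the arms terminate on specific and geometrically distant boundary arcs of $\Omega$: from scale $d(z)$ out to macroscopic scale, at least two yellow arms must reach the far boundary arcs, which forces a $k$-arm crossing with $k\ge 2$ of a half-annular region in $\Omega$. This crossing probability is controlled by Corollary~\ref{cor:smoothtwothree} at smooth boundary points and by Proposition~\ref{cor:smoothwedgemultiarm} near corners of $\Omega$, yielding $\alpha>0$ uniformly in $z$.

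Combining the two bounds, $|\bar\partial G^\delta(z)|\lesssim \delta\cdot d(z)^{\alpha-2}$. Partition $\Omega\setminus\Omega_{\text{edge}}$ into dyadic shells $A_k\colonequals\{z:2^{-(k+1)}\le d(z)\le 2^{-k}\}$ for $k=0,1,\ldots,\lceil\log_2(1/\delta)\rceil$. Since $\partial\Omega$ has finite length, $|A_k|\lesssim 2^{-k}$, and on $A_k$ the integrand is dominated by $\delta\cdot 2^{k(2-\alpha)}$ up to the bounded weight $|g_w\circ\phi|\,|\phi'|$, so
\[\int_{A_k}|\bar\partial G^\delta(z)|\,|g_w(\phi(z))|\,|\phi'(z)|\,dA(z)\lesssim \delta\cdot 2^{k(1-\alpha)}.\]
Summing from $k=0$ to $\lceil\log_2(1/\delta)\rceil$ gives $O(\delta^{\min(\alpha,1)})$, as desired (when $\alpha=1$ we absorb the logarithmic factor into $\delta^{c}$ for any $c<1$).

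The chief obstacle is verifying the macroscopic decay factor $d(z)^{\alpha}$ in~(\ref{eq:fivearmbd}) uniformly in $z$. Since the whole-plane polychromatic $5$-arm exponent $2$ is exactly critical for the preceding integration, the extra decay coming from the boundary constraint is indispensable, and must be established by decomposing each five-arm event into a local whole-plane component near $z$ and a half-annular component beyond distance $d(z)$. The argument has to account for the position of $z$ relative to corners and the marked boundary points, where the angle-dependent arm exponents from Proposition~\ref{cor:smoothwedgemultiarm} apply in place of the smooth-boundary exponents of Corollary~\ref{cor:smoothtwothree}. Near the poles of $g_w$ in $T$ and near corners of $\Omega$ where $|\phi'|$ may be unbounded, some additional care is needed, but in all cases the singularities are integrable and do not affect the exponent $c$ to leading order.
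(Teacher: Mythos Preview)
Your argument rests on the bound \eqref{eq:fivearmbd}, which you justify via Kesten's whole-plane alternating five-arm estimate applied in the disk $B_{d(z)}(z)$. This is where the argument breaks down: the event $E^{\text{five arm}}_{\tau^k,\tau^l,\sigma}(z)$ does \emph{not} force five arms from $z$. By its definition (and Figure~\ref{fig:fivearm}), only the three arms of the underlying event $E_{\tau^k}(z)\setminus E_{\tau^k}(z+\tau^k\eta)$ emanate from $z$; the two additional arms arise at a boundary point $b$ where one of those three arms touches $\partial\Omega$ in $\Omega$ but fails to touch the corresponding arc in the shifted domain $\Omega'$. Moreover, these two extra arms are the same color, so even in principle there is no alternating five-arm pattern around any single point. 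Consequently, inside $B_{d(z)}(z)$ the correct input is the three-arm whole-plane exponent $c_3=2/3$, giving only $(\delta/d(z))^{c_3}$ in place of $(\delta/d(z))^2$.

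That substitution is fatal for your integration scheme: with exponent $c_3<1$ you obtain $|\bar\partial G^\delta(z)|\lesssim\delta^{c_3-1}d(z)^{-c_3}\cdot d(z)^{\alpha}$, and no purely macroscopic decay factor $d(z)^{\alpha}$ can supply the missing power of $\delta$. The paper recovers exactly this missing $\delta$ by decomposing over the location of the boundary point $b$ and using the half-plane three-arm exponent $2$ and two-arm exponent $1$ there (Proposition~\ref{prop:twothreearm}); summing over $b$ produces an additional factor $\asymp\delta$ and yields $|\bar\partial G^\delta(z)|\lesssim\delta^{c_3-\eps}$ away from $\partial\Omega$. Near $\partial\Omega$ the paper further decomposes according to the position of $b$ relative to corners and to the projection $z_{\text{foot}}$ of $z$, tracking the conformal-map factors $|\phi'|$ and $|(\phi^{-1})'|$ explicitly; your treatment of these as a ``bounded weight'' is also too optimistic, since $|\phi'|$ blows up at corners with interior angle exceeding that of the target, and this interacts nontrivially with the arm exponents. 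In short, the sum over $b$ is not optional decoration but the mechanism that converts Proposition~\ref{prop:CR} into a useful pointwise bound.
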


Since $\Res (g_w,w)$ is a continuous function of $w$ with no
zeros in $K$, there exists $C>0$ such that
\[0<C^{-1} < \Res (g_w,w) < C <\infty, \qquad \forall w\in K, \] and
similarly for the residue at $w_0$. Therefore, \eqref{gdA2} implies that
$Q(w)$ is within $O(\delta^c)$ of a constant function, as $w$ ranges over
the gray triangle shown in Figure~\ref{fig:parallelogram}.  By considering $w$
to be one of the vertices of the gray triangle (so that $J(w)-w=0$), we see
that this constant function is $O(\delta^c)$. We conclude that $Q(w) =
O(\delta^c)$.  By (\ref{eq:smooth}), this implies $J(w)-w=O(\delta^c)$. By
definition, this is equivalent to $\tilde{G}^\delta(z)-\phi(z) =
O(\delta^c)$. The theorem follows, since $\tilde{G}^\delta$ agrees with
$G^\delta$ except on the outermost layer of lattice points.
\end{proof}

We combine the rate of convergence for $H_1+\tau H_\tau+\tau^2H_{\tau^2}$
with the rate of convergence for $H_1+ H_\tau+H_{\tau^2}$ near $\partial
\Omega$ to prove the rate of convergence of the crossing probabilities.

\begin{proof}[Proof of Theorem~\ref{thm:cardyrate}]
  Let $z\in [x(1),x(\tau)]$. First we note that
  $H^\delta_{\tau^2}(z)=O(\delta^c)$ by Proposition \ref{prop:boundary}. Hence, by
  Theorem~\ref{thm:general},
     \begin{align*}
    H^\delta_1(z) + \tau H^\delta_\tau(z) &= \phi(z) + O(\delta^c) \, .
  \end{align*}
We also have that
$$     H^\delta_1(z) + H^\delta_\tau(z) = 1 + O(\delta^c) \, ,$$
since $S^\delta(z) = 1 + O(\delta^c)$ by Proposition~\ref{prop:boundary}
(ii).  Since the vectors $(1,1),(1,\tau)\in \bbC^2$ are linearly
independent, this concludes the proof.
%
\end{proof}

\section{Bounding the error integral} \label{appendix}

\subsection{Piecewise analytic Jordan domains}

In this section, we prove the two lemmas used in the proof of the main
theorem. We often treat the conformal map $\phi(z)$ like a power of $z$
when $z$ is near a corner of the domain $\Omega$. To make this precise, we
use the following theorem from the conformal map literature \cite{L57}.
\begin{theorem} \label{thm:confmap}
  If $\Omega$ is a Jordan domain part of whose boundary consists of two
  analytic arcs meeting at a positive angle $2\pi \alpha$ at the origin, and
  if $\phi:\Omega \to \bbH$ is a Riemann map sending 0 to 0, then
  there exists a neighborhood $B$ of the origin and continuous
  functions $\rho_1,\rho_2:B\cap \overline{\Omega} \to \bbC$ and
  $\rho_3,\rho_4:\phi(B\cap \overline{\Omega}) \to \bbC$ for which
\begin{align*}
\phi(z) &= z^{1/(2\alpha)} \rho_1(z), \qquad \phi'(z) = z^{1/(2\alpha) - 1}
\rho_2(z), \\ \phi^{-1}(z) &= z^{2\alpha} \rho_3(z),\text{ and} \qquad
(\phi^{-1})'(z) = z^{2\alpha-1} \rho_4(z)
\end{align*}
and $\rho_i(0)\neq 0$ for $i\in \{1,2,3,4\}$.
\end{theorem}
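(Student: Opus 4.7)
The plan is to follow the classical approach, as in Lehman's paper \cite{L57} from which the theorem is cited: straighten one bounding arc by an analytic change of coordinates, unfold the corner by the power map $z\mapsto z^{1/(2\alpha)}$, and then invoke a boundary regularity theorem to handle the resulting smoothed domain.

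First I would reduce to a normal form. Let $\gamma_1,\gamma_2$ be the two analytic arcs meeting at $0$, and after a rotation assume $\gamma_1$ has tangent direction $1$ at $0$. The real-analytic parametrization $g_1$ of $\gamma_1$ extends to a holomorphic biholomorphism $\psi_1$ between complex neighborhoods of $0$ with $\psi_1(0)=0$ and $\psi_1'(0)\neq 0$. Replacing $\Omega$ by $\psi_1^{-1}(\Omega)$, I may assume that near $0$ one boundary arc of $\Omega$ lies along the positive real axis, the second arc $\tilde\gamma_2$ is analytic with tangent direction $e^{2\pi i\alpha}$ at $0$, and the interior of $\Omega$ lies locally in the sector $\{0<\arg z<2\pi\alpha\}$.

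Next I would apply the branch of $z\mapsto z^{1/(2\alpha)}$ sending that sector to $\bbH$. A direct fractional-power expansion shows that, parametrized by $s=t^{1/(2\alpha)}$, the image of $\tilde\gamma_2$ has the form $c(s)=-s+\tilde c_1 s^{1+2\alpha}+\tilde c_2 s^{1+4\alpha}+\cdots,$ so the image arc is $C^{1,\beta}$ at $0$ for $\beta=\min(2\alpha,1)>0$ and tangent to $\bbR$. The unfolded region $\widetilde\Omega$ is then a Jordan domain whose boundary is $C^{1,\beta}$ through $0$, and by Kellogg's theorem the Riemann map $R\colon \widetilde\Omega\to\bbH$ with $R(0)=0$ extends $C^{1,\beta}$ to the closure with $R'(0)\neq 0$.

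Composing yields $\phi(z)=R\bigl(\psi_1^{-1}(z)^{1/(2\alpha)}\bigr)$. Since $\psi_1^{-1}(z)=\psi_1'(0)^{-1}z\,(1+O(z))$, raising to the $1/(2\alpha)$ power and expanding $R$ to first order gives $\phi(z)=z^{1/(2\alpha)}\rho_1(z)$ with $\rho_1$ continuous on $B\cap\overline\Omega$ and $\rho_1(0)=R'(0)\,(\psi_1'(0))^{-1/(2\alpha)}\neq 0$. Differentiating the same three-fold composition via the chain rule produces $\phi'(z)=z^{1/(2\alpha)-1}\rho_2(z)$ with continuous $\rho_2$ and $\rho_2(0)\neq 0$. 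The identities for $\phi^{-1}$ and $(\phi^{-1})'$ follow by running the same argument in reverse: use $R^{-1}\colon\bbH\to\widetilde\Omega$ (which is $C^{1,\beta}$ at $0$ with nonzero derivative, as $R$ is), then $z\mapsto z^{2\alpha}$ in place of $z\mapsto z^{1/(2\alpha)}$, then $\psi_1$ in place of $\psi_1^{-1}$.

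The main technical obstacle is verifying that the unfolded boundary is regular enough for a Kellogg-type extension theorem to apply, i.e.\ carrying out the fractional-power algebra above and identifying the precise smoothness class. Lehman circumvents this step by instead establishing directly from the analytic parametrizations of $\gamma_1,\gamma_2$ a full asymptotic expansion of $\phi$ in fractional powers of $z$, and the existence of continuous, nonvanishing factors $\rho_i$ then drops out as the leading coefficients of those expansions.
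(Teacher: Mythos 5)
The paper does not prove Theorem~\ref{thm:confmap}; it is quoted as a known result with a pointer to Lehman~\cite{L57}, so there is no in-paper argument to compare against. Your sketch is the standard ``straighten--unfold--Kellogg'' route, which is a genuinely different path from the one in the cited reference (Lehman works directly from the analytic parametrizations to produce a full fractional-power asymptotic expansion of $\phi$, and the nonvanishing factors $\rho_i$ fall out as leading coefficients, as you correctly note at the end). Your route trades Lehman's precise expansion for a softer boundary-regularity input and is the more economical way to get exactly the stated conclusion.

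The sketch is essentially correct, but two small points deserve attention. First, your Hölder exponent $\beta=\min(2\alpha,1)$ is problematic when $2\alpha\ge 1$: Kellogg--Warschawski is stated for $C^{1,\beta}$ with $\beta\in(0,1)$, and $C^{1,1}$ is genuinely more delicate. Since the fractional-power remainder $O(s^{2\alpha})$ in the derivative is $o(s^{\beta})$ for every $\beta<1$ when $2\alpha\ge1$, you should simply fix any $\beta\in(0,1)$ in that regime rather than taking $\beta=1$. Second, the composition $R\bigl(\psi_1^{-1}(z)^{1/(2\alpha)}\bigr)$ is one particular Riemann map onto $\bbH$ fixing $0$, whereas the theorem is stated for an arbitrary Riemann map $\phi$ with $\phi(0)=0$. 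You should close this by observing that any two such maps differ by a M\"obius automorphism of $\bbH$ fixing $0$, namely $w\mapsto \lambda w/(1-cw)$ with $\lambda>0$, $c\in\bbR$; such a map is analytic near $0$ with nonzero derivative, so it only modifies the $\rho_i$ by a nonvanishing continuous factor. With those two patches the argument is complete.
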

We choose a collection $\mathcal{B}$ of disks covering the boundary of
$\Omega$ as follows (see Figure~\ref{fig:boundarydisks}). For each
$z\in \partial \Omega$, choose a disk $B(z)$ centered at $z$ and small
enough that the boundary arc (or arcs) containing $z$ admits a Taylor
expansion in $B(z)$. If necessary, shrink $B(z)$ so that $\partial \Omega$
is well-approximated by its tangent (or tangents, if $z$ is a corner point)
in $B(z)$, in the sense of Propositions~\ref{prop:cont_exp} and
\ref{cor:smoothwedgemultiarm}. If necessary, shrink $B(z)$ once more to
ensure that $\Omega \cap B(z)$ has one component.  From this collection of
open disks, extract a finite subcover $\calB = (B_j)_{j=1}^p$ of $\partial
\Omega$ containing $\calB_{\text{corners}} = \{B(z)\,:\,z\text{ is a corner
  point}\}$. Then $\calB$ is an annular region whose interior has positive
distance from $\partial \Omega$. Thus, for all sufficiently small $\delta$,
$\calB$ covers $\Omega_{\text{edge}}$. Note that this cover has been chosen
in a manner which depends only on $\Omega$ and $\eps$, and in particular is
independent of $\delta$.

Throughout our discussion, we permit the constants in statements involving
asymptotic notation to depend only on the three-pointed domain. We also use
$C$ to represent an arbitrary constant which depends only on the
three-pointed domain. When working with the variable $\eps$, we will
frequently relabel small constant multiples of $\eps$ as $\eps$
from one line to the next.

\begin{pictures}
\begin{figure}
  \centering
  \includegraphics[width=6cm]{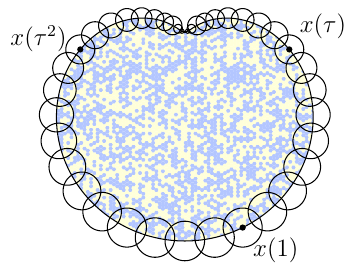}
  \caption{We cover the boundary with finitely many small disks, so that
    the boundary is approximately straight in each disk. Moreover, we
    ensure that every corner and every marked point is centered at one of
    these disks. More disks are required in regions of high curvature, as
    illustrated here for a domain bounded by a limaçon.} \label{fig:boundarydisks}
\end{figure}
\end{pictures}

\begin{lemma} \label{lem:a_outside} Let $J, g_w$ be as in
  Section~\ref{mainthm}, and suppose that the angle measures at marked
  points are $2\pi \alpha_i$ for $i=1,2,3$, and remaining angles are $2\pi
  \beta_j$ for $j=1,2,\ldots,n$. For every $\eps>0$,
\begin{equation} \label{gdAA}
\int_{T_\text{edge}} \bar \partial J(\zeta) g_w(\zeta) dA(\zeta) \lesssim \delta^{\min_{i,j}\left(1,\frac{1}{6\alpha_i},\frac{1}{2\beta_j}\right)-\eps},
\end{equation}
where the implied constants depend only on $\eps$ and the three-pointed domain.
\end{lemma}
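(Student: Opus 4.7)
The plan is to pull the integral back from $T_{\text{edge}}$ to $\Omega_{\text{edge}}$ via the conformal map, bound the integrand by combining Proposition \ref{prop:CR} with the conformal map asymptotics of Theorem \ref{thm:confmap} at corners, and then split the resulting integral using the cover $\mathcal{B}$, treating smooth boundary arcs and corners separately. Since the poles $w_0$ and $w$ of $g_w$ both lie in the gray triangle $K$ at positive distance from $\partial T$ (uniformly in $w \in K$), we have $|g_w(\zeta)| \lesssim 1$ on $T_{\text{edge}}$, so it suffices to bound $\int_{T_{\text{edge}}} |\bar\partial J(\zeta)|\,dA(\zeta)$. Applying the Wirtinger chain rule to $J = \tilde G^\delta \circ \phi^{-1}$ (using holomorphicity of $\phi^{-1}$) and changing variables via $\zeta = \phi(z)$, this becomes $\int_{\Omega_{\text{edge}}} |\bar\partial \tilde G^\delta(z)|\,|\phi'(z)|\,dA(z)$.

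Next I would use the finite cover $\mathcal{B}$ to split this integral and estimate it ball by ball. On a ball $B_j \in \mathcal{B} \setminus \mathcal{B}_{\text{corners}}$ where the boundary is smooth, $|\phi'|$ is bounded by a constant, and Lemma \ref{lem:Holder} together with the Lipschitz scaling of piecewise linear interpolation gives $|\bar\partial \tilde G^\delta| \lesssim \delta^{c-1}$ on each triangle of the triangulation. Since $\Omega_{\text{edge}} \cap B_j$ has area $O(\delta)$, the contribution from smooth parts of $\partial \Omega$ is $O(\delta^c)$, matching the exponent $1$ in the minimum (up to the $\eps$ loss).

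For a ball $B_j \in \mathcal{B}_{\text{corners}}$ centered at a corner $z_0$ with opening angle $2\pi\alpha_i$ (marked) or $2\pi\beta_j$ (unmarked), I would perform a dyadic decomposition $\Omega_{\text{edge}} \cap B_j = \bigcup_\ell A_{r_\ell}$ into annular shells $A_r = \{z : r/2 \leq |z - z_0| \leq r\} \cap \Omega_{\text{edge}}$ at scales $r_\ell = 2^\ell \delta$, $\ell = 0, 1, \dots, O(\log(1/\delta))$. Each shell has area $O(\delta r)$. The image of $\Omega$ under $\phi$ has opening angle $\pi/3 = 2\pi/6$ at marked corners and $\pi$ at the images of unmarked corners, so Theorem \ref{thm:confmap} (applied to $\phi$ composed with an appropriate Schwarz-Christoffel-type map from $T$ to the half plane) yields $|\phi'(z)| \asymp |z-z_0|^{1/(6\alpha_i)-1}$ at a marked corner and $|\phi'(z)| \asymp |z-z_0|^{1/(2\beta_j)-1}$ at an unmarked one. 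The factor $|\bar\partial \tilde G^\delta|$ on each shell is controlled by combining Proposition \ref{prop:CR} with the corner multi-arm estimate of Proposition \ref{cor:smoothwedgemultiarm} at scale $r$, giving a decay of the form $\delta^{-1}(\delta/r)^{\kappa(\text{angle}) - \eps}$. The modification producing $\tilde G^\delta$ from $G^\delta$ on the outermost lattice layer is handled separately using Proposition \ref{prop:boundary}(iii), which bounds the projection displacement by $O(\delta^c)$ and hence contributes only $O(\delta^{c-1})$ on the single layer of edge triangles where $\tilde G^\delta \neq G^\delta$.

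Putting the per-shell bounds together, the contribution from shell $A_{r_\ell}$ is a product of $\delta r \cdot r^{\gamma - 1} \cdot \delta^{-1}(\delta/r)^{\kappa - \eps}$ where $\gamma \in \{1/(6\alpha_i), 1/(2\beta_j)\}$ and $\kappa$ is the appropriate arm exponent; since the exponents arrange so that the geometric sum over $\ell$ is dominated by the smallest scale $r \sim \delta$, the sum simplifies to $\delta^{\gamma - \eps}$. Taking the minimum over all corners and adding the smooth-arc contribution $\delta^{1-\eps}$ yields the stated bound $\delta^{\min_{i,j}(1, 1/(6\alpha_i), 1/(2\beta_j)) - \eps}$. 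The main obstacle is verifying that the correct arm-event exponent appears at each corner (including using the SLE input in Proposition \ref{cor:smoothwedgemultiarm} for large $k$ at reflex angles) and carefully tracking the correction $\tilde G^\delta - G^\delta$ so that Proposition \ref{prop:CR}, which is stated only for $G^\delta$, can be used on interior triangles while the boundary layer is absorbed by the a priori estimates of Proposition \ref{prop:boundary}.
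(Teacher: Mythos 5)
Your overall architecture is close to the paper's — a cover $\mathcal{B}$ of $\partial\Omega$, conformal-map asymptotics from Theorem~\ref{thm:confmap} at corners, a shell-by-shell decomposition, and corner arm estimates — but there are two genuine gaps, the first of which breaks the argument.

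\textbf{The pole $w$ is not bounded away from $\partial T$.} You discard $g_w$ at the start by claiming the gray triangle $K$ lies at positive distance from $\partial T$. It does not: the proof of Theorem~\ref{thm:general} evaluates $Q(w)$ for $w$ ranging over $K$ and then takes $w$ to be a vertex of $K$ at which $J(w)-w=0$; those vertices are $1,\tau,\tau^2$, so $K$ is the full triangle $T$ and the variable pole $w$ can sit arbitrarily close to $\partial T$ --- indeed the paper explicitly analyzes the worst case ``the location of the pole is in the face nearest to the center of $B$.'' Consequently $|g_w|$ contributes a factor $\asymp (k\delta)^{-1/(2\beta)}$ (respectively $(k\delta)^{-1/(6\alpha)}$) in the $k$-th shell near a corner, and this factor is not cosmetic: it is exactly what turns the exponent $1/\beta$ that your computation would produce into the $1/(2\beta)$ in the lemma's exponent min. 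Your bound $|g_w|\lesssim 1$ therefore does not hold uniformly over the $w$ that actually arise, and the ensuing estimate does not apply to the integral the lemma is about.

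\textbf{The smooth-arc bound via Lemma~\ref{lem:Holder} is too weak.} For the non-corner part of the cover you invoke the RSW H\"older estimate to get $|\bar\partial\tilde G^\delta|\lesssim\delta^{c-1}$ with $c$ the H\"older exponent, and then conclude a contribution of $O(\delta^c)$, claiming this ``matches the exponent $1$.'' But the RSW-derived H\"older exponent is small (certainly not $1$), so $\delta^c$ is strictly worse than the $\delta^{1-\eps}$ required by the minimum. What the paper actually uses in this region is not H\"older continuity but the fact that the displacement $|\tilde G^\delta - G^\delta|$ is bounded by $H_1(z)=\bbP(E_1(z))$, which it estimates by factoring into a two-arm half-disk crossing at scale $k\delta$ and a two-arm crossing in the $\beta$-annulus from $2k\delta$ out to constant order (Propositions~\ref{prop:twothreearm} and \ref{cor:smoothwedgemultiarm}); the two-arm half-plane exponent is exactly $1$, which is what delivers $\delta^{1-\eps}$. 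Likewise the increment $G^\delta(z+\eta)-G^\delta(z)$ is controlled by the five-arm/shifted-domain trick, not by H\"older continuity. You would need to replace the H\"older input with these sharper arm-event estimates to hit the claimed exponent.

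Everything else in your sketch --- the Wirtinger chain rule with the Jacobian change of variables, the dyadic shell decomposition near corners, the appeal to Proposition~\ref{cor:smoothwedgemultiarm} (and the remark about needing SLE input for the multi-arm bounds at reflex angles) --- matches the paper's strategy, but without correcting the two points above the shell sum does not close to the stated exponent.
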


\begin{proof}[Proof of Lemma.]
   Let $\calB$ be as described above. Since the number of disks in $\calB$ is bounded independently of $\delta$, it suffices to demonstrate that
  \eqref{gdAA} holds for each one.  Let $B\in \calB$, and let $\pi
  \beta$ be the angle formed by $\partial \Omega$ center of $B$.
 \begin{pictures}
    \begin{figure}
    \centering
    \includegraphics[width=6cm]{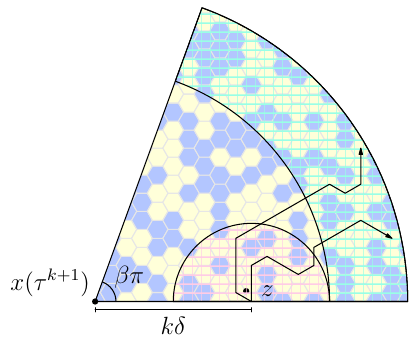}
    \caption{ If $z$ is adjacent to the side
      $[x(\tau^{k+1}),x(\tau^{k+2})]$, then the distance from $G^\delta(z)$
      to $\partial T$ is equal to the probability $H_{\tau^k}(z)$. This
      probability is bounded by that of a two-arm half-plane event with
      radius $k\delta$ and the two-arm $\beta$-annulus event with inner radius
      $2k\delta$ and constant-order outer radius.}
    \label{fig:integrating}
  \end{figure}
  \end{pictures}

  To bound $\left|\int_{T_\text{edge}\cap \phi(\Omega \cap B)}
    \bar \partial J(\zeta) g_w(\zeta) dA(\zeta) \right|$, we index all the
  faces $\{F_k\}$ intersecting $\partial \Omega$ in such a way that the
  distance from $F_k$ to the center of $B$ is $\asymp k\delta$ for all $k$;
  this is possible since $\partial \Omega$ is piecewise smooth. We will
  bound the integral over each $F_k$ and then sum over $k$ (see
  Figure~\ref{fig:integrating}).  Let $\zeta\in T_{\text{edge}}\cap
  \phi(F_k)$ and suppose that $[\tau,\tau^2]$ is the closest boundary arc.
   We rewrite
\begin{equation} \label{eq:partialJ}
\bar \partial J(\zeta) = \bar \partial \tilde G^\delta (\phi^{-1}(\zeta)) (\phi^{-1})'(\zeta),
\end{equation}
and we define $z=\phi^{-1}(\zeta)$.  First we bound $\bar \partial \tilde
G^\delta(z)$. In modifying $G^\delta(z)$ to obtain
$\tilde{G}^\delta(z)$, the image of $z$ has to be moved no farther
than $H_1(z) = \bbP(E_1(z))$, by the definition of
$\tilde{G}^\delta(z)$. The event $E_1(z)$ entails a two-arm
half-disk crossing and a two-arm $\beta$-annulus crossing (see
Figure~\ref{fig:integrating}). Since these events occur in disjoint
regions, they are independent and we can bound $\bbP(E_1(z))$ by the
product of their probabilities. By Corollary~\ref{cor:smoothtwothree}, the
two-arm half-plane exponent in $\Omega$, is 1 and by
Proposition~\ref{cor:smoothwedgemultiarm} the two-arm $\beta$-annulus
exponent is $1/2\beta$. Thus the probability of $E_1(z)$ is at most
$(k\delta)^{1/2\beta -\eps} (1/k)^{1-\eps}$. Hence for $z+\eta$ in the
outermost layer and $z$ a neighbor of $z+\eta$, we have
\begin{align} \nonumber
 \frac{1}{\delta}&\left( \tilde{G}^\delta(z+\eta)
    -\tilde{G}^\delta (z) \right) \\ \label{eq:edge}
  &= \frac{1}{\delta}\left( \tilde{G}^\delta(z+\eta) -
    G^\delta(z+\eta) + G^\delta(z+\eta)  - G^\delta(z) + G^\delta(z) -
    \tilde{G}^\delta (z) \right) \\ \nonumber
&\leq \frac{1}{\delta}\left( (k\delta)^{1/2\beta -\eps}
  (1/k)^{1-\eps} +
  G^\delta(z+\eta) - G^\delta(z) \right). \\ \nonumber
&\asymp \delta^{-1} (k\delta)^{1/2\beta -\eps}
  (1/k)^{1-\eps}.
\end{align}
In the last step we use a shifted domain trick (see the proof of the second
inequality in Proposition~\ref{prop:CR} and Figure~\ref{fig:fivearm}) and
apply the trivial inequality $\bbP(A\setminus B) \leq \bbP(A)$. Using
\eqref{eq:edge} to bound each term of the expression $\bar\partial
\tilde{G}^\delta(z)=\left( \frac{\partial}{\partial \eta}\right.  -\left.
  \frac{1}{\tau}\frac{\partial}{\partial(\tau\eta)}
\right)\tilde{G}^\delta(z)$, we get $\bar\partial \tilde{G}^\delta(z)
\asymp \delta^{-1} (k\delta)^{1/2\beta -\eps} (1/k)^{1-\eps}$.

  We assume that the location $z$ of the pole is in the face nearest to the center of
$B$ (since that is the worst case) and also that the image of the center of
$B$ is not a vertex of the equilateral triangle $T$.
We obtain
\begin{align} \label{eq:supremum_estimation}
  \bigg|\int_{T_\text{edge}\cap \phi(\Omega\cap B_j)} & \bar \partial
  J(\zeta) g_w(\zeta) dA(\zeta)\bigg| \\ \nonumber &\leq \sum_{k=1}^{C/\delta}
  \sup_{z\in F_k} |\bar\partial \tilde{G}^\delta(z)
  (\phi^{-1})'(\phi(z))g_w(\phi^{-1}(z))| \text{area}(\phi(F_k))
\end{align}
by replacing the integrand with its supremum on each $F_k$ and
  summing over $k$. We use the estimate $\text{area}(\phi(F_k))
\lesssim \sup_{z\in F_k}|\phi'(z)|^2\delta^2$ and use
Theorem~\ref{thm:confmap} to estimate the factors involving $\phi$. We
bound the right-hand side of \eqref{eq:supremum_estimation} by
\begin{align} \label{curlybraces}
    &\hspace{0.2cm} \nonumber \lesssim \sum_{k=1}^{C/\delta}
  \overbrace{\delta^{-1}(k\delta)^{1/2\beta -\eps}
  (1/k)^{1-\eps}}^{\bar\partial
  \tilde{G}^\delta}\overbrace{(k\delta)^{1-1/2\beta}}^{(\phi^{-1})'(\phi(z))}\overbrace{(k\delta)^{-1/2\beta}}^{g_w}\overbrace{\delta^2
  (k\delta)^{2/2\beta-2}}^{\text{area}(\phi(F_{k}))}\\ \nonumber
&\hspace{0.2cm}\asymp \delta^{1-\eps} \left(\sum_{k=1}^{C/\delta}
(k\delta)^{1/2\beta-2} \delta \right)\\ \nonumber
&\hspace{0.2cm}\asymp \left\{
\begin{array}{cl}
  \delta^{1-\eps} & \text{if } 2\beta \leq 1 \\ \nonumber
  \delta^{1/2\beta -\eps} & \text{if } 2\beta > 1.
\end{array}
\right.
\end{align}
We have evaluated the sum by noting that the factor in parentheses is a
convergent Riemann sum when the exponent is at least $-1$. When the
exponent is less than $-1$, the summation over $k$ gives a constant factor,
leaving the contributions of the powers of $\delta$.

If the center of $B_j$ is a marked point, the proof is essentially the same
and the net effect is to replace $1/2\beta$ with $1/6\alpha$ throughout the
calculation. These replacements are justified either by fewer percolation
arms (when the exponent appears in an arm event estimate), or by the angle
of $\pi/3$ at the vertices of the triangle $T$ (when the exponent appears
because of the conformal map $\phi$).
\end{proof}

\begin{remark} \label{rem:sle_free} We can remove the dependence on
    SLE by using Smirnov's theorem to estimate one-arm $\beta$-annulus
    probabilities (instead of using
    Proposition~\ref{cor:smoothwedgemultiarm}). The result is that we
    obtain \eqref{gdAA} with the right-hand side replaced by
  \[\delta^{\min_{i,j}\left(1,\frac{1}{6\alpha_i},\frac{1}{6\beta_j}\right)-\eps}.\]
\end{remark}

\begin{lemma} \label{lem:a_multiscale} Let $J, g_w, \{\alpha_i\},
  \{\beta_j\}$ be as in the statement of Lemma~\ref{lem:a_outside}. Let
  $c_3=2/3$ be the 3-arm whole-plane exponent. Then
  \begin{align} \label{eq:int} \int_{T\setminus T_\text{edge}}
    \bar \partial J(\zeta) g_w(\zeta) dA(\zeta)
    &\lesssim \delta^{\min_{i,j}\left(c_3,\frac{1}{6\alpha_i},\frac{1}{2\beta_j}\right)-\eps},
  \end{align}
  where the implied constants depend only on $\eps$ and the three-pointed
  domain.
\end{lemma}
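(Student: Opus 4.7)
The plan is to pull the integral back to $\Omega$ via $z = \phi^{-1}(\zeta)$, bound $|\bar\partial G^\delta|$ pointwise using Proposition~\ref{prop:CR}, and carry out a multi-scale dyadic decomposition analogous to the one in the proof of Lemma~\ref{lem:a_outside}.

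Since $\phi^{-1}$ is holomorphic, the chain rule gives $|\bar\partial J(\zeta)| = |\bar\partial G^\delta(\phi^{-1}(\zeta))|\cdot|(\phi^{-1})'(\zeta)|$ and $dA(\zeta) = |\phi'(z)|^2\,dA(z)$, so the integral in \eqref{eq:int} is bounded above by
\[
\int_{\Omega\setminus\Omega_{\text{edge}}} |\bar\partial G^\delta(z)|\,|g_w(\phi(z))|\,|\phi'(z)|\,dA(z).
\]
Proposition~\ref{prop:CR} bounds $\delta|\bar\partial G^\delta(z)|$ by a five-arm event probability, which in turn is dominated by the probability that three disjoint arms of alternating color emanate from $z$ and reach distance $\dist(z,\partial\Omega)$. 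In the bulk this is at most $(\delta/\dist(z,\partial\Omega))^{c_3-\eps}$ by the whole-plane three-arm exponent; near a corner or marked point the analogous wedge estimate from Proposition~\ref{cor:smoothwedgemultiarm} applies.

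Using the cover $\calB$ from the proof of Lemma~\ref{lem:a_outside}, I would split $\Omega\setminus\Omega_{\text{edge}}$ into a ``bulk'' piece disjoint from every $B\in\calB$ and pieces meeting each $B\in\calB$. In the bulk, $|g_w\circ \phi|$ and $|\phi'|$ are uniformly bounded and $\dist(z,\partial\Omega)$ is bounded below, so the three-arm whole-plane estimate together with the bounded area yields the contribution $O(\delta^{c_3-\eps})$. For each $B \in \calB$, I would decompose $B\cap(\Omega\setminus\Omega_{\text{edge}})$ into dyadic annular rings at distance $\asymp k\delta$ from the center of $B$, with $k$ running up to $\asymp 1/\delta$. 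On each such ring the integrand is estimated as in the computation at the end of the proof of Lemma~\ref{lem:a_outside}, except that the appropriate three-arm wedge exponent from Proposition~\ref{cor:smoothwedgemultiarm} replaces the two-arm exponent that appeared there, and the worst-case location of the pole of $g_w$ (possibly near the image of the center of $B$) must again be considered. Summing over $k$ yields contributions of order $\delta^{1/(6\alpha_i)-\eps}$ from each marked point and $\delta^{1/(2\beta_j)-\eps}$ from each unmarked corner, while smooth boundary points in $\calB$ contribute a better power of $\delta$.

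The main obstacle is the careful bookkeeping of the multi-scale sum: each type of point (bulk, smooth boundary, unmarked corner of angle $2\pi\beta$, marked corner of angle $2\pi\alpha$) requires its own balancing of the powers of $k$ and $\delta$ coming from the arm probability, the conformal factor $|\phi'|$ furnished by Theorem~\ref{thm:confmap}, the singularity of $g_w$ at its pole, and the area element. The three-arm whole-plane exponent $c_3$ appears in \eqref{eq:int} from the bulk contribution, while the corner exponents arise through the interplay of Theorem~\ref{thm:confmap} with Proposition~\ref{cor:smoothwedgemultiarm}; taking the minimum of the three types of contributions yields \eqref{eq:int}.
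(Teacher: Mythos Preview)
Your overall strategy---pull back to $\Omega$, invoke Proposition~\ref{prop:CR}, split into bulk and boundary disks using $\calB$, and do a multiscale sum---matches the paper. But there is a genuine gap in the boundary piece.

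You propose to dominate the five-arm probability by the probability that three alternating arms from $z$ reach distance $\dist(z,\partial\Omega)$, giving only $(\delta/\dist(z,\partial\Omega))^{c_3-\eps}$. This is fine in the bulk, where $\dist(z,\partial\Omega)\asymp 1$, but near $\partial\Omega$ it is far too weak. If you index faces by $(n,k)$ with $\dist(z,\partial\Omega)\asymp n\delta$ and $|z-\text{corner}|\asymp k\delta$, your bound gives $|\bar\partial G^\delta|\lesssim \delta^{-1}n^{-c_3+\eps}$; multiplying by $|\phi'|\asymp(k\delta)^{1/6\alpha-1}$ and the area element $\delta^2$ and summing over $1\le n\lesssim k\lesssim 1/\delta$ produces a sum of order $\delta^{c_3-1-\eps}$, which diverges. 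The three-arm whole-plane estimate at scale $n\delta$ does not by itself control the integral near the boundary.

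What the paper actually uses is the \emph{full} five-arm structure of Proposition~\ref{prop:CR}: the three arms at $z$ continue to macroscopic scale (landing on three different boundary arcs), and there are two additional arms at the boundary point $b$ where the ``missed connection'' occurs (Figure~\ref{fig:fivearm}). One must then sum over the location of $b$, and the paper does this via a four-case analysis (Cases A--D, Figure~\ref{fig:bregions}) according to where $b$ sits relative to $z_{\text{foot}}$ and the corners. The relevant extra ingredients are a three-arm half-disk estimate at $b$, a two-arm half-annulus estimate between $b$ and $z$, and a \emph{one}-arm $\alpha$-annulus estimate from scale $k\delta$ to macroscopic scale (not the three-arm wedge estimate you suggest). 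These extra factors supply exactly the decay needed to make the double sum over $(n,k)$ and the sum over $b$ converge to the stated power of $\delta$. Without the sum over $b$ and the associated boundary arm events, the argument does not close.
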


\begin{proof}[Proof of Lemma.]
  We will use Proposition~\ref{prop:CR} to bound $\bar\partial G$. Let
  $\calB$ be as above and note that $\text{dist}(\partial \Omega, \Omega
  \setminus \bigcup \calB) > 0$ by the discussion preceding
  Lemma~\ref{lem:a_outside}.

  We first handle $\Omega \setminus \bigcup \mathcal{B}$. Suppose that one
  of the five-arm events of Figure~\ref{fig:fivearm} occurs, say
  $E_{1,1,1}^{\text{five arm}}(z)$. Let $b$ be the point nearest
  $x(\tau^2)$ where a blue arm touches down in the shifted domain, and
  let $s$ be the number of lattice units along the boundary from $b$ to
  $x(\tau^2)$.  When $z\notin \bigcup \mathcal{B}$ (see
  Figure~\ref{fig:middle}), $z$ is well away from the boundary thus we
  note that such a five arm event entails the existence of:
\begin{enumerate}
\item a $3$-arm whole-plane event in alternating colors at $z$, in a ball
  of radius $\Theta(1)$,
\item a $3$-arm half-annulus event of alternating colors originating at $b$,
  in a semi-circle of radius $s\delta/2$, and
\item a $2$-arm half-annulus event in an annulus of inner radius
  $s\delta/2$ and outer radius~$\Theta(1)$.
\end{enumerate}
Since the derivative of the conformal map is bounded above and below for
$z$ away from the boundary, we can ignore the contribution of
$\phi'(\phi^{-1}(z))$ in \eqref{eq:partialJ} and calculate
  \begin{align*}
    |\bar\partial J(z)| &\lesssim \delta^{-1}\sum_{s=1}^{C/\delta}
    \overbrace{(\delta^{c_3-\eps})}^{\text{3-arm
        whole-plane}}\times\overbrace{(1/s)^2}^{\text{3-arm
        half-plane}}\times\overbrace{(s\delta)}^{\text{2-arm half-ann.}}
    \\ &\lesssim \delta^{c_3-\eps}.
\end{align*}
Hence we have
\begin{align*}
  \left|\int_{\phi(\Omega \setminus \calB)} \bar \partial J(\zeta)
    g_w(\zeta) dA(\zeta) \right| \lesssim \delta^{c_3-\eps} \int_{\phi(\Omega
    \setminus \calB)} |g_w(\zeta)|\,dA(\zeta) \lesssim \delta^{c_3-\eps},
\end{align*}
since a simple pole is integrable with respect to area measure.

\begin{pictures}
\begin{figure}
  \centering
  \includegraphics[width=6cm]{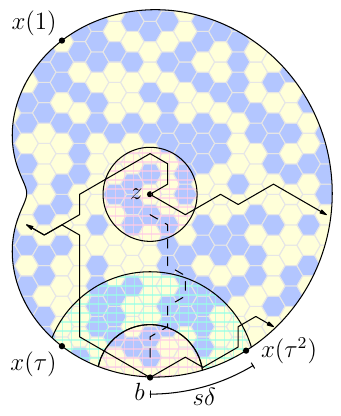}
  \caption{To bound the probability of the five-arm difference event
    described in Proposition~\ref{prop:CR}, we consider three regions which
    contain two-arm or three-arm crossing events (these regions are shown
    in green and red, respectively).} \label{fig:middle}
\end{figure}
\end{pictures}

To bound the integral of the union of the balls in $\calB$, we handle each
$B\in \calB$ separately. We first consider a ball centered at a marked
corner, say $x(\tau)$.  Once again, for each $z$ and each percolation
configuration, we define $b\in \partial \Omega$ to be the point nearest
$x(\tau^2)$ at which a blue arm from $z$ touches down in the shifted
domain. This time we let $s$ be the graph distance from $b$ to the boundary
point $z_{\text{foot}}$ nearest to $z$ (see Figure~\ref{fig:cornerlabels})
and index the faces $F_{n,k}$ in such a way that if $z\in F_{n,k}$,
$|x(\tau)-z|\asymp k\delta$ and $\dist(\partial
\Omega,z)\asymp n\delta$. As above, we bound $|\bar \partial
\tilde{G}^\delta(z)|$ using percolation arm estimates in each hexagonal
face and sum over all the faces in $\phi(\Omega \cap B)$.  By symmetry, it
suffices to sum over only the faces which are closer to the boundary arc
$[x(\tau),x(\tau^2)]$ than to the boundary arc
$[x(1),x(\tau)]$.

\begin{pictures}
\begin{figure}
\includegraphics[width=0.4\textwidth]{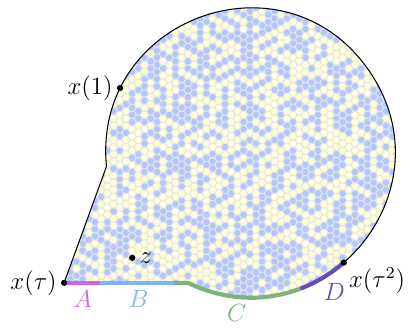}
\caption{We sum over the possible locations for $b$, considering the cases
  $b\in A$, $b\in B$, $b\in C$, and $b\in D$ separately.} \label{fig:bregions}
\end{figure}
\end{pictures}

Suppose that the corner at $B$ is one of the three marked points and has
interior angle $\alpha \pi$. We bound $|\bar \partial \tilde{G}^\delta|$ by
summing over all possible locations for $b$. We consider four cases:
\begin{itemize}
\item Case {A}: $b$ is closest to the corner at $x(\tau)$ (Figure~\ref{subfig:marked_bcorner}),
\item Case {B}: $b$ is within $k/2$ units of
$z_\text{foot}$ (Figure~\ref{subfig:marked_bnotcorner}),
\item Case {C}: $b$ is more than $k/2$ units to the right of $z_\text{foot}$ but closer to
$z_\text{foot}$ than to $x(\tau^2)$ (Figure~\ref{subfig:marked_b_otherside}), and
\item Case {D}: $b$ is closest to $x(\tau^2)$
  (Figure~\ref{subfig:marked_b_far}).
\end{itemize}
For simplicity, we assume that $[x(\tau),x(\tau^2)]$ is a real analytic arc (that
is, that there are no corners between $x(\tau)$ and $x(\tau^2)$). It will be
apparent that similar estimates hold when additional corners are accounted
for.

Denote by $P(z,b)$ the contribution to $\bar\partial G^\delta$ of the
five-arm event with missed connection at $b$ (see
Figure~\ref{fig:fivearm}). As in \eqref{eq:supremum_estimation}, we bound
the sum for Case A by a constant times
\begin{align*}
 \sum_{k=1}^{C/\delta} \sum_{n=1}^{Ck}\sum_{r=1}^{k/2}
  &\overbrace{\underbrace{\delta^{-1}}_{\bar\partial
    }\underbrace{n^{-c_3-\eps}}_{\text{3-arm
        disk.}}\underbrace{r^{-2}}_{\text{3-arm
        half-disk.}}\underbrace{\left(\frac{r}{k}\right)^{1/2\alpha-\eps}}_{\text{2-arm }\alpha\text{-ann.}}\underbrace{(k\delta)^{1/6\alpha-\eps}}_{\text{1-arm
      }\alpha\text{-ann.}}}^{P(z,b)} \\
&\hspace{3cm} \times
\overbrace{(k\delta)^{1-1/6\alpha}}^{(\phi^{-1})'(\phi(F_{n,k}))}
\overbrace{\delta^2
  (k\delta)^{1/3\alpha-2}}^{\text{area}(\phi(F_{n,k})))}
\overbrace{(k\delta)^{-1/6\alpha}}^{g_w} \\
&\lesssim \delta^{\min(c_3,1/6\alpha)-\eps}.
\end{align*}
We upper bound the contribution of Case B by a constant times
\begin{align*}
   \sum_{k=1}^{C/\delta} \sum_{n=1}^{Ck} \sum_{s=1}^{k/2}
  &\overbrace{\underbrace{\delta^{-1}}_{\bar\partial}\underbrace{n^{-c_3-\eps}}_{\text{3-arm
        disk}}\underbrace{\frac{1}{s^2+n^2}}_{\text{3-arm half
        disk}}\underbrace{\frac{\sqrt{s^2+n^2}}{k}}_{\text{2-arm half-ann.}}
    \underbrace{(k\delta)^{1/6\alpha-\eps}}_{\text{1-arm
      }\alpha\text{-ann.}}}^{P(z,b)} \\
&\hspace{3cm}\times
  \overbrace{(k\delta)^{1-1/6\alpha}}^{(\phi^{-1})'(\phi(F_{n,k}))}
  \overbrace{\delta^2
    (k\delta)^{1/3\alpha-2}}^{\text{area}(\phi(F_{n,k}))}
  \overbrace{(k\delta)^{-1/6\alpha}}^{g_w} \\
 &\lesssim \delta^{\min(c_3,1/6\alpha)-\eps}.
\end{align*}
For Case C, we get
\begin{align*}
  \sum_{k=1}^{C/\delta} \sum_{n=1}^{Ck}\sum_{r=k/2}^{C/\delta}
  &\overbrace{\underbrace{\delta^{-1}}_{\bar\partial
    }\underbrace{n^{-c_3-\eps}}_{\text{3-arm
        disk.}}\underbrace{k^{-2}}_{\text{3-arm half-disk.}}}^{P(z,b)}
  (k\delta)^{1-1/6\alpha}\delta^2(k\delta)^{1/3\alpha-2}(k\delta)^{-1/6\alpha}
  \\
  &\lesssim \delta^{1/6\alpha-\eps}.
\end{align*}
For Case D, we denote by $2\pi \gamma$ the angle at $x(\tau^2)$ and by $t$
the number of lattice units from $x(\tau^2)$ to $b$. We obtain
\begin{align*}
 \sum_{k=1}^{C/\delta} \sum_{n=1}^{Ck}\sum_{t=1}^{C/\delta}
  &\overbrace{\underbrace{\delta^{-1}}_{\bar\partial
    }\underbrace{n^{-c_3-\eps}}_{\text{3-arm
        disk.}}\underbrace{t^{-2}}_{\text{3-arm
        half-disk.}}\underbrace{(t\delta)^{1/\gamma}}_{\text{2-arm }\gamma\text{
        ann}}}^{P(z,b)}
  (k\delta)^{1-1/6\gamma}\delta^2(k\delta)^{1/3\gamma-2}(k\delta)^{-1/6\gamma} \\
&\lesssim \delta^{1/2\gamma-\eps}.
\end{align*}

\begin{pictures}
\begin{figure}
  \subfigure[\label{subfig:marked_bcorner}]{\includegraphics[width=0.47\textwidth]{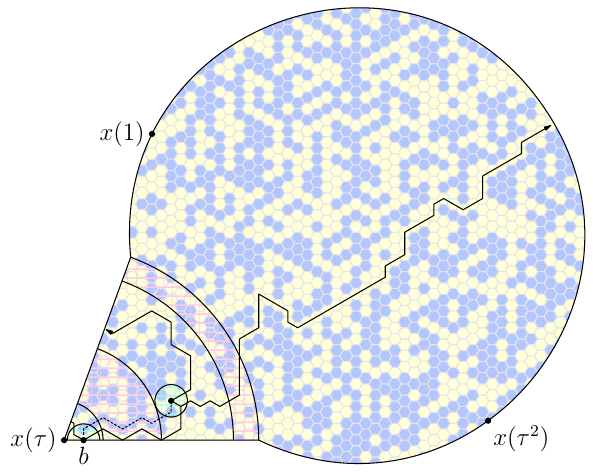}}
   \subfigure[\label{subfig:marked_bnotcorner}]{\includegraphics[width=0.47\textwidth]{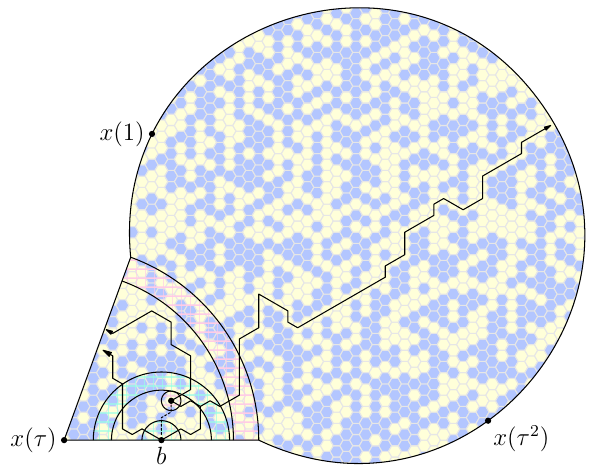}}  \\
  \subfigure[\label{subfig:marked_b_otherside}]{\includegraphics[width=0.47\textwidth]{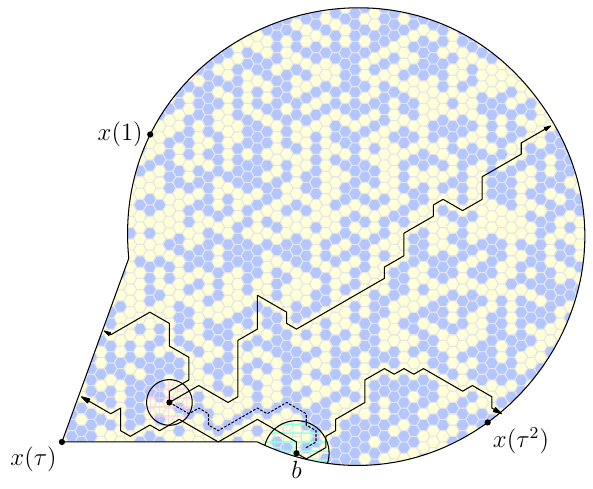}}
  \subfigure[\label{subfig:marked_b_far}]{\includegraphics[width=0.47\textwidth]{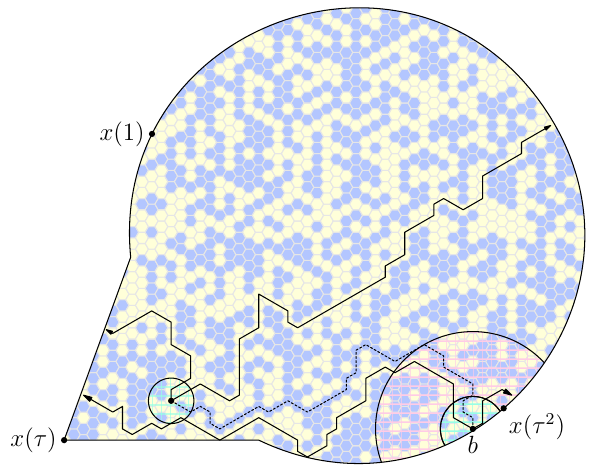}}
  \caption{Assuming that $z$ is near a marked corner, we have four cases to
    consider: (a) $b$ is close to $x(\tau)$, (b) $b$ is close to $z$, (c) $b$ is
    between $z$ and $x(\tau^2)$ but far from both, and (d) $b$ is close to
    $x(\tau^2)$. For a closer view of the corner with additional labels,
    see Figure~\ref{fig:cornerlabels}.  } \label{fig:markedcorner}
\end{figure}
\end{pictures}

\begin{pictures}
\begin{figure}
\includegraphics[width=0.45\textwidth]{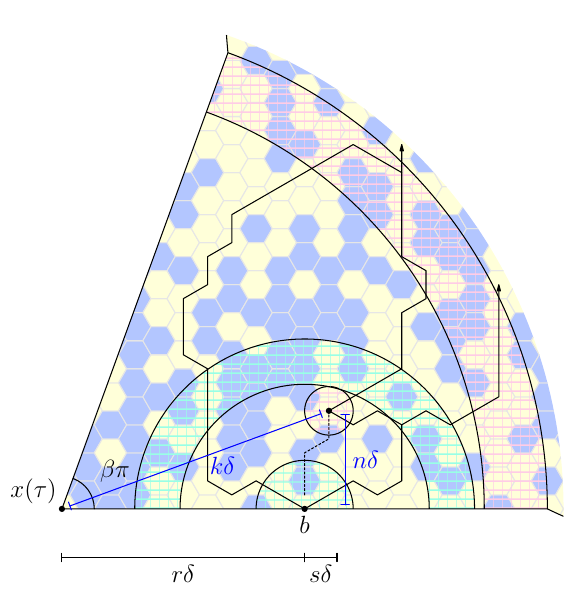}
\caption{A close-up view of the corner of
  Figure~\ref{subfig:marked_bnotcorner}, with labels illustrating the roles
  of $k$, $n$, $r$, and $s$. The faces are indexeed by $k$ and $n$ in such
  a way that the distance from $z$ to the corner is $\asymp k\delta$ and
  the distance from $z$ to $\partial \Omega$ is $\asymp
  n\delta$. Similarly, the faces intersecting the boundary are indexed so
  that the distance along the boundary from the corner to $b$ is $\asymp
  r\delta$ and the distance from $b$ to $z_{\text{foot}}$ is $\asymp
  s\delta$. } \label{fig:cornerlabels}
\end{figure}
\end{pictures}

The proofs for the bounds in a disk whose center is not marked are
essentially the same as these. As in the proof of
Lemma~\ref{lem:a_outside}, the net effect is to replace $1/6\alpha$ with
$1/2\beta$.
\end{proof}

\begin{remark}
  As in Remark~\ref{rem:sle_free}, we can remove the dependence on
    SLE by using Smirnov's theorem instead of
    Proposition~\ref{cor:smoothwedgemultiarm}, under the additional
  assumption that $\partial\Omega$ has no reflex angles (that is,
  $\max_{i,j}(\alpha_i,\beta_j)\leq 1/2$). By using the weaker one-arm
  $\beta$-annulus bound in place of the two-arm and three-arm bounds, we
  obtain \eqref{eq:int} with the right-hand side replaced by
  \[\delta^{\min_{i,j}\left(c_3,\frac{1}{6\alpha_i},\frac{1}{6\beta_j}\right)-\eps}.\]
  Without the help of SLE, our techniques break down in the presence of
  reflex angles.
\end{remark}

\subsection{Uniform bounds for half-annulus domains} \label{subsec:uniform_constant}

While the constants in Theorem~\ref{thm:cardyrate} generally depend on the
three-pointed domain, there are some classes of domains for which
Theorem~\ref{thm:cardyrate} holds with uniform constants. In preparation
for the proof of Theorem~\ref{thm:halfarm}, we obtain uniform constants
for a class of half-annulus domains with arbitrarily small ratio of inner to outer radius.

Let $\Omega_{r,R}\subset \bbH $ be the origin-centered half-annulus of
inner and outer radius $r$ and $R$, respectively. Let $T_{\text{unit}}$ be
the triangle with vertices $0,1,$ and $e^{i\pi/3}$, and define
$\phi_{r,R}:\Omega_{r,R} \to T_{\text{unit}}$ to be the conformal map
sending $-R$, $-r$, and $R$ to $e^{i\pi/3}, 0$, and $1$, respectively. For
$r\geq 0$, define $S_r = \{r e^{i\theta} \,: 0 \leq \theta \leq \pi\}$.

\begin{prop} \label{prop:uniform_constant}
  For all $0<c<c_3 = 2/3$ and $0<\delta\leq r\leq 1/2$, we have
  \begin{equation} \label{eq:uniform_constant}
    P^{\delta}(S_r \leftrightarrow S_1) - \phi_{r,1}(r)  =
    O(r^{-1/3}\delta^{c}) = O(\delta^{c-1/3}),
  \end{equation}
  where the implied constants depend only on $c$ and, in particular, are
  uniform over $r\in (0,1/2]$.
\end{prop}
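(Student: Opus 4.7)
The plan is to apply the framework of Theorem~\ref{thm:general} to the three-pointed half-annulus domains $(\Omega_{r,1},-1,-r,1)$, carefully tracking the $r$-dependence of all implied constants. First, observe that the event $E_1(r)$ underlying $H_1(r)$ requires a simple yellow path from $[x(\tau^2),x(1)] = S_r \cup [r,1]$ to $S_1 = [x(1),x(\tau)]$ that separates $r$ from $[x(\tau),x(\tau^2)] = [-1,-r]$. A path starting on the real segment $[r,1]$ would leave $r$ and $[-1,-r]$ on the same side, while any yellow path from $S_r$ to $S_1$ necessarily achieves the required separation. Hence $H_1(r) = P^\delta(S_r\leftrightarrow S_1)$ exactly. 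Treating $r$ as a fourth marked point and applying Carleson's form of Cardy's formula to the image four-pointed domain in $T_{\mathrm{unit}}$ identifies $\phi_{r,1}(r)$ as the limit of $H_1(r)$, so the remaining task is quantitative control of $H_1(r) - \phi_{r,1}(r)$.

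Next, I would rerun the Cauchy-integral argument of Section~\ref{mainthm} for $\Omega_{r,1}$ with all constants made explicit in $r$. The half-annulus family is geometrically uniform: every corner $\pm r, \pm 1$ has interior angle $\pi/2$, so $\alpha_i = \beta_j = 1/4$ and the exponent in Remark~\ref{rem:sharp_exponent} is the sharp $c_3 = 2/3$. By Theorem~\ref{thm:confmap}, $\phi_{r,1}$ has corner exponent $2/3$ at each corner, with local expansion $\phi_{r,1}(z) = c_{-r}(z+r)^{2/3}(1+o(1))$ near $-r$; the normalization $\phi_{r,1}(\pm 1)\in\{e^{i\pi/3},1\}$ combined with extremal-length estimates for the half-annulus forces $c_{-r} \asymp r^{-1/3}$. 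The boundary cover $\mathcal{B}$ of Section~\ref{appendix} is chosen with $O(1)$ disks of radius $\asymp r$ near the inner arcs and $\Theta(1)$-sized disks elsewhere. Rerunning the arm-event summations of Lemmas~\ref{lem:a_outside} and~\ref{lem:a_multiscale} with this $r$-adapted cover, the occurrences of $c_{-r}$ in $|(\phi_{r,1}^{-1})'|$, in the area of $\phi_{r,1}(F_k)$, and in the Weierstrass kernel $g_w$ combine to a single residual factor $r^{-1/3}$, while the power of $\delta$ remains dictated by the universal exponents. The resulting bound is
\[
|G^\delta(z) - \phi_{r,1}(z)| = O(r^{-1/3}\delta^c)
\]
uniformly for $z \in \Omega_{r,1}$ and any $c<2/3$.

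Finally, I would apply the Cardy extraction from the proof of Theorem~\ref{thm:cardyrate}. For a lattice neighbor $z_0$ of $r$ in the interior of $\Omega_{r,1}$ at distance $\asymp\delta$ from $\partial\Omega_{r,1}$, Proposition~\ref{prop:boundary} applied to the arc $[x(\tau^2),x(1)]$ containing $r$ gives $H_\tau(z_0)=O(\delta^c)$ and $S^\delta(z_0)=1+O(\delta^c)$. Combining these with the preceding bound on $G^\delta(z_0)-\phi_{r,1}(z_0)$ and the natural affine identification between $T$ and $T_{\mathrm{unit}}$, solving the resulting linear system isolates $H_1(z_0)=\phi_{r,1}(z_0)+O(r^{-1/3}\delta^c)$. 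The H\"older estimate of Lemma~\ref{lem:Holder} transfers the bound from $z_0$ to $r$, and the first step identifies $H_1(r)$ with $P^\delta(S_r\leftrightarrow S_1)$, which finishes the proof.

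The principal obstacle is the uniform-in-$r$ claim of the second step. The original proof of Theorem~\ref{thm:general} permits all constants to depend on the fixed domain, whereas here we must replay it for a one-parameter family degenerating as $r\to 0$. The crucial accounting is that near the inner semicircle the conformal normalization constant $c_{-r}\asymp r^{-1/3}$ enters the various summands with different exponents, but these combine favorably in the arm-event summations to leave only a single $r^{-1/3}$ factor in the final bound. The sharp exponent $c<2/3$ is attained thanks to the coincidence of the corner angle $\pi/2$ at every vertex of $\Omega_{r,1}$, matching the three-arm whole-plane exponent $c_3$.
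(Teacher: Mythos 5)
Your overall route is the same as the paper's: make the Cauchy-integral estimate of Theorem~\ref{thm:general} explicit in $r$ to obtain $|G^\delta-\phi_{r,1}|=O(r^{-1/3}\delta^c)$ uniformly, then pass to the crossing probability by the linear-system extraction from Theorem~\ref{thm:cardyrate}. The identification $H_1(r)=P^\delta(S_r\leftrightarrow S_1)$ in your first paragraph is correct, and the Carleson form identifies $\phi_{r,1}(r)$ as the limit.

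The genuine gap is in your second paragraph, which is where all the work of the proposition lives. Your $r$-adapted cover ($O(1)$ disks of radius $\asymp r$ near the inner arcs, $\Theta(1)$-sized disks elsewhere) does not cover the boundary segments at distances in $[Cr,c]$ from the origin, and the assertion that ``rerunning the arm-event summations'' collapses all occurrences of $c_{-r}$ into a single $r^{-1/3}$ is exactly the statement that needs to be proved, not assumed. The paper isolates the intermediate annulus $A'=\{z: r(1+\eta)<|z|<1/2\}$ and handles it with a dedicated three-index summation in which the three-arm half-disk and two-arm half-annulus contributions have a cutoff at scale $\eta r$ (the terms $\left(\delta/(s\delta\wedge\eta r)\right)^{2-\eps}$ and $\left((s\delta\wedge\eta r)/(\eta r)\right)^{1-\eps}$), precisely because the missed-connection point $b$ can lie on $S_r$; moreover $|\phi_{r,1}'(z)|\asymp|z|^{-2/3}$ blows up as $|z|\downarrow r$, so the flat three-arm bulk bound from Lemma~\ref{lem:a_multiscale} is not $r$-uniform there. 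None of this structure is anticipated in your sketch, and without it the claimed $r^{-1/3}$ is unsubstantiated. A secondary, repairable issue: in the extraction step you cite Proposition~\ref{prop:boundary} for $H_\tau(z_0)=O(\delta^c)$, but that proposition's constants degenerate as $r\to 0$. The correct uniform estimate couples a one-arm event at the $\pi/2$-corner up to scale $r$ (exponent $2/3$) with a one-arm half-annulus event from scale $r$ to scale $1$ (exponent $1/3$), giving $H_\tau(z_0)\lesssim (\delta/r)^{2/3-\eps}r^{1/3-\eps}=r^{-1/3}\delta^{2/3-\eps}$; this still suffices, but must be rederived rather than quoted.
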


\begin{remark}
  To ensure that the interval $(0,c_3-1/3)$ of possible exponents $c$ is
  nonempty, we need the $\SLE$ result that the three-arm whole-plane
  exponent $c_3$ is greater than $1/3$.
\end{remark}

\begin{proof}
  We proceed by modifying Lemmas~\ref{lem:a_outside} and
  \ref{lem:a_multiscale} to prove \eqref{gdAA} and \eqref{eq:int} with
  constants uniform over the domains $\Omega_{r,1}$. For $z\in \bbC$ and
  $\rho\geq 0$, let $B(z,\rho)$ be the disk of radius $\rho$ centered at
  $z$.  For the integral over $\Omega_{r,1} \setminus B(0,1/2)$ we obtain a
  bound of $O(\delta^{2/3-\eps})$ by Lemmas~\ref{lem:a_outside} and
  \ref{lem:a_multiscale}, so it suffices to consider the integral over
  $\Omega_{r,1} \cap B(0,1/2)$.

  Fix $\eps>0$, and determine $\alpha(\eps)$ from
  Proposition~\ref{prop:cont_exp}. Choose $\eta(\eps)$ small enough that
  $B(i,\eta)\setminus B(0,1) $ is contained in a sector of angle $\pi+\alpha$
  centered at $i$.  Cover $S_r$ with finitely many balls of radius $2r\eta$
  in such a way that $\bigcup_{w\in S_r} B(w,r\eta)$ is contained in the
  union $U$ of the balls. By Lemmas~\ref{lem:a_outside} and
  \ref{lem:a_multiscale} and rescaling \eqref{gdAA} and \eqref{eq:int} by a
  factor of $r$, we find that $\int_U|\bar\partial J g_w|
  \,dA=O(r^{-1/3}\delta^{c_3-\eps})$. So it remains to consider the
  integral over the annulus $A'\colonequals \{z\,:\, r(1+\eta) < |z| < 1/2\}$. We
  reduce further to considering the integral over the left half $\{z \in A'
  : \pi/2 < \text{arg}(z) < \pi\}$ of $A'$, since the contribution from the
  right half of $A'$ is smaller.  We compute this integral similarly to
  those in Lemmas~\ref{lem:a_outside} and \ref{lem:a_multiscale} (see
  Figure \ref{fig:uniform_constant}): we index the faces $F_{n,k}$ in such
  a way that $|F_{n,k}|-r \asymp k\delta$ and $\dist(F_{n,k},\bbR) \asymp
  n\delta$ and, for $z \in F_{n,k}$ we bound
  \begin{align*} &P(z,b) \lesssim \\
    &\underbrace{\delta^{-1}}_{\bar\partial }\underbrace{(n\wedge
      k)^{-c_3+\eps}}_{\text{3-arm
        disk.}}\underbrace{\left(\frac{\delta}{s\delta\wedge
          \eta r}\right)^{2-\eps}}_{\text{3-arm
        half-disk.}}\underbrace{\left(\frac{s\delta\wedge
          \eta r}{\eta r}\right)^{1-\eps}}_{\text{2-arm half
        ann.}}\underbrace{\left(\frac{r}{k\delta+r}\right)^{1-\eps}}_{\text{2-arm
      }\text{half-ann.}}\underbrace{(k\delta+r)^{1/3-\eps}}_{\text{1-arm
      }\text{half-ann`.}}.
\end{align*}
Figure~\ref{fig:conformalmap} shows how to write $\phi_{r,1}$ as a
composition of simpler conformal maps. Using this composition, we compute
\begin{align*}
\phi_{r,1}(z)&\asymp\frac{(z+r)^{2/3}}{z^{1/3}},  \\
\phi_{r,1}'(z)&\asymp\frac{z-r}{z^{4/3}(z+r)^{1/3}}, \text{ and} \\
\phi_{r,1}^{-1}(\phi_{r,1}(z))&\asymp \frac{z^{4/3}(z+r)^{1/3}}{z-r}.
\end{align*}
Using these estimates, we can upper bound $\int |\bar\partial J g_w| \,dA$
by summing over the faces $F_{n,k}$. We obtain
\begin{align*}
 \sum_{k=\eta r/\delta}^{C/\delta} \sum_{n=1}^{C k}\sum_{s=1}^{C r/\delta}
  &P(z,b)\overbrace{(k\delta+r)^{1/3}(k\delta)^{1/3}}^{(\phi^{-1})'(\phi(F_{n,k}))}
\overbrace{\delta^2
  (k\delta+r)^{-2/3}(k\delta)^{-2/3}}^{\text{area}(\phi(F_{n,k}))}
\overbrace{\frac{(k\delta+r)^{1/3}}{(k\delta)^{2/3}}}^{g_w} \\
 &\lesssim r^{-1/3} \delta^{c_3-\eps}.
\end{align*}
\end{proof}

\begin{pictures}
\begin{figure}
  \includegraphics{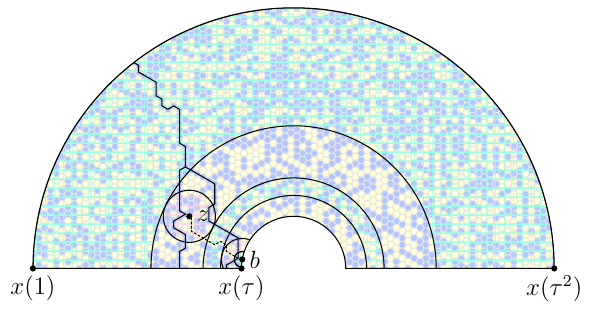}
  \caption{We use crossing events for the five regions shown to bound the
    probability of a five-arm event for which $b \in S_r$.} \label{fig:uniform_constant}
\end{figure}
\end{pictures}

\begin{halfplane}
\section{Half-plane exponent} \label{halfarmsection}

We begin with a lemma about the conformal maps $\phi_{r,R}:\Omega_{r,R}\to
T_{\text{unit}}$; see Subsection~\ref{subsec:uniform_constant} for notation.

\begin{lemma} \label{lem:confmap} There exist $a_1,a_2>0$ so that for all
  $r,R>0$ such that $r/R < 1/2$, we have
\begin{equation} \label{eq:confmap}
  a_1 \leq \frac{ \phi_{r,R}(r)}{(r/R)^{1/3}} \leq a_2.
\end{equation}
\end{lemma}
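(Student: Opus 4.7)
The plan is to factorize $\phi_{r,1}$ as the composition $\phi_{r,1} = \Psi_r \circ f_r$, where $f_r$ is an elementary function that captures the singular behavior near the corner $-r$ and $\Psi_r$ is a residual Riemann map with uniformly controlled derivative at $0$. This makes precise the heuristic $\phi_{r,1}(z)\asymp (z+r)^{2/3}/z^{1/3}$ from the proof of Proposition~\ref{prop:uniform_constant}. By scale invariance $\phi_{r,R}(z) = \phi_{r/R,1}(z/R)$, we reduce to $R=1$ and must show $\phi_{r,1}(r)\asymp r^{1/3}$ uniformly in $r\in (0,1/2]$.

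First I would define
\[
f_r(z) = \frac{(z+r)^{2/3}}{z^{1/3}}
\]
with branches chosen so that $f_r>0$ on $[r,1]$. The key observation is that $f_r^3(z) = z + 2r + r^2/z$ is a translated Joukowski map, and any two preimages $z_1,z_2$ of the same value satisfy $z_1 z_2 = r^2$, so at most one lies outside $|z|=r$; hence $f_r^3$ is injective on $\Omega_{r,1}$, and since its image lies in $\bbH$, the cube root is single-valued and $f_r:\Omega_{r,1}\to U_r:=f_r(\Omega_{r,1})$ is a conformal bijection. A direct boundary calculation shows $U_r$ is a curvilinear triangle: the images of $S_r\cup[r,1]$, $[-1,-r]$, and $S_1$ are respectively the straight segment $[0,(1+r)^{2/3}]$, the straight segment from $0$ to $(1-r)^{2/3} e^{i\pi/3}$ along the ray of angle $\pi/3$, and an analytic arc from $(1+r)^{2/3}$ to $(1-r)^{2/3}e^{i\pi/3}$. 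In particular, the interior angle of $U_r$ at $0$ is exactly $\pi/3$, the point $f_r(r) = 2^{2/3}r^{1/3}$ lies on the straight bottom side, and as $r\to 0$, $U_r$ converges in the Carath\'eodory sense to the unit circular sector $U_0 = \{\rho e^{i\theta}:0<\rho<1,\ 0<\theta<\pi/3\}$.

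Let $\Psi_r : U_r\to T_{\mathrm{unit}}$ be the conformal map sending the three corners of $U_r$ in order to $0,1,e^{i\pi/3}$, so that $\phi_{r,1} = \Psi_r\circ f_r$ by uniqueness of the Riemann map. Since the interior angles of $U_r$ and $T_{\mathrm{unit}}$ at $0$ coincide, I would locally straighten both wedges via $z\mapsto z^3$ and apply Theorem~\ref{thm:confmap} to the resulting Riemann maps to $\bbH$ from each side: this shows $\Psi_r$ is complex-analytic at $0$, with expansion $\Psi_r(w) = c_r w + O(w^2)$ where $c_r := \Psi_r'(0) > 0$ and the implicit constant in $O(w^2)$ is controlled by the uniform geometry of the family $\{U_r\}_{r\in(0,1/2]}$. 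Because $U_r\to U_0$ in the Carath\'eodory kernel sense with matching boundary normalization, $\Psi_r\to\Psi_0:U_0\to T_{\mathrm{unit}}$ uniformly on compact subsets, and the derivative at the fixed vertex $0$ converges: $c_r\to c_0:=\Psi_0'(0)>0$. Combined with continuity of $r\mapsto c_r$ on $(0,1/2]$, this yields $c_r\asymp 1$ uniformly, and therefore
\[
\phi_{r,1}(r) = \Psi_r(2^{2/3}r^{1/3}) = c_r\,2^{2/3}r^{1/3}\bigl(1+O(r^{1/3})\bigr)\asymp r^{1/3}.
\]
The main technical obstacle is the uniform control of $c_r$ and of the Taylor remainder as $r\to 0$, which requires standard Carath\'eodory-type stability arguments applied up to the boundary corner at $0$; once these are in place the lemma follows directly.
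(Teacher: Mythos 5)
Your proposal is correct and takes a genuinely different decomposition from the paper's. The paper writes $\phi_{r,1}$ as a chain of five elementary conformal maps (a scaled Joukowski map, ellipse-to-disk, a M\"obius map, cube root, and a Schwarz-Christoffel map from a sector to the triangle); it then bounds the derivatives of the non-power maps above and below on a suitable compact $K_n$, using Carath\'eodory kernel convergence to handle the ellipse-to-disk map whose eccentricity depends on $r$, and reads off the $r^{1/3}$ from the explicit $f_1(r)=2r/(1+r^2)$ together with the cube root. You instead factor out a single explicit singular map $f_r(z)=(z+r)^{2/3}/z^{1/3}$ (collapsing the Joukowski and cube-root steps and skipping the ellipse normalization entirely) and treat the remainder $\Psi_r=\phi_{r,1}\circ f_r^{-1}$ as a Riemann map from the curvilinear triangle $U_r$ to $T_{\text{unit}}$, both with matching straight-sided angle $\pi/3$ at $0$. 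This is arguably cleaner: it isolates the singular behavior in one closed-form map, directly rigorizes the asymptotic $\phi_{r,1}(z)\asymp(z+r)^{2/3}/z^{1/3}$ that the paper only invokes heuristically in the proof of Proposition~\ref{prop:uniform_constant}, and identifies a limiting constant $\lim_{r\to 0}\phi_{r,1}(r)/r^{1/3}=2^{2/3}\,\Psi_0'(0)$. The cost is that your crux --- uniform control of $\Psi_r'(0)$ and of the quadratic Taylor remainder over $r\in(0,1/2]$ --- is a Carath\'eodory stability statement at a \emph{boundary} point rather than at interior points, and you have compressed it into one sentence. You should make it explicit: since both $U_r$ and $T_{\text{unit}}$ have straight boundary segments meeting at angle $\pi/3=2\pi/6$ at $0$, Schwarz-reflect $\Psi_r$ six times around that corner to obtain a conformal map $\hat\Psi_r$ on an open neighborhood $\hat U_r$ of $0$ whose inradius at $0$ is bounded below uniformly in $r$; then $\hat U_r\to\hat U_0$ in the Carath\'eodory kernel sense with base point $0$, so $\hat\Psi_r'(0)=\Psi_r'(0)\to\Psi_0'(0)>0$, and the uniform bound on the hexagonal target together with Cauchy estimates controls the second Taylor coefficient uniformly. (This avoids any appeal to Theorem~\ref{thm:confmap}, which you do not actually need once the wedges are straight.) That reflection step is the exact analogue of the paper's Carath\'eodory argument for the $r$-dependent ellipse, and with it supplied your proof is complete.
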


\begin{proof}
  By scaling, we may assume $R=1$. Consider the sequence of conformal maps
  illustrated in Figure~\ref{fig:conformalmap}. Let us call these maps
  $f_n$ for $n=1,2,\ldots,5$, so that $f_n:D_n\to D_{n+1}$. Since the
  domains are Jordan, we may regard $f_n$ as a continuous map defined on
  the closure of each domain. Define the compositions $\tilde{f}_n=f_n\circ
  f_{n-1}\cdots \circ f_1$.

  For $n\geq 2$, let $K_n\subset D_n$ denote the image of \[K_1\colonequals
  \{z\,:\,|z|=r\text{ and }\arg z \in [0,\pi/2]\}\cup[r,1/2]\] under
  $\tilde{f}_{n-1}$. For $n\in \{2,3,5\}$, regard $f_n$ as having been
  analytically continued in a neighborhood of every straight boundary (by
  Schwarz reflection), and define $m_n$ and $M_n$ to be the infimum and
  supremum of $f_n'(z)$ as $z$ ranges over $K_n$ and $r$ ranges over
  $[0,1/2]$.

  We claim that $0<m_n<M_n<\infty$ for all $n\in\{2,3,5\}$.  For $n=5$,
  this follows from the continuity of $f_n'$ and the fact that the
  derivative of a conformal map cannot vanish. For $n=3$, this follows from
  the joint continuity of the M\"obius map $(z-w)/(1-\overline{w}z)$ in $w$
  and $z$.

  The case $n=2$ requires more care, since the eccentricity of $D_2$
  depends on $r$. We introduce the notation $D_{2,r}$ and $f_{2,r}$ to
  indicate this dependence. Let $I\subset (0,1/2)$ be an interval. We claim
  that for every fixed $z \in \bigcap_{r\in I}D_{2,r}$, the quantity
  $f_{2,r}'(z)$ is continuous in $r$. We first recall some
    definitions from complex analysis: given a simply connected domain
    $U\subset \bbC$ and a point $z\in U$, we will say that a Riemann map
    $\varphi:\mathbb{D}\to U$ is \textit{normalized} if $\varphi(0)=z$ and
    $\varphi'(0)>0$. Recall that a sequence of open sets $U_n\subset \bbC$
    converges to an open set $U\subset \bbC$ in the Carath\'eodory sense
    with respect to $z\in U$ if (a) for all compact $K\subset U$ containing
    $z$, we have $K\subset U_n$ for all $n$ sufficiently large, and (b) $U$
    contains every open set satisfying condition (a). If $U_n \to U$ in the
    Carath\'eodory sense, then the normalized Riemann maps
    $\varphi_n:\mathbb{D} \to U_n$ converge uniformly on compact subsets to
    the normalized Riemann map $\varphi:\mathbb{D}\to U$ \cite{W}. Observe
  that if $r_n \to r$, $D_{2,r_n}$ converges to $D_{2,r}$ with respect to 0
  in the Carath\'eodory sense.  Hence $f_{2,r_n}\to f_{2,r}$ uniformly on
  compact sets, which in turn implies that $f'_{2,r_n}\to f'_{2,r}$
  uniformly on compact sets. In particular, we obtain joint
  continuity of $f_{2,r}'(z)$ in $z$ and $r$. It follows that the
  infimum and supremum of $|f_r'(z)|$ over $(z,r)\in K_n\times [0,1/2]$ are
  achieved, which implies $0<m_2<M_2<\infty$.

Since $f_1(r)=2r/(1+r^2)$, we have
\[
r \leq f_1(r) \leq 2 r.
\]
We note that each $f_n$ is monotone on the real line, and apply $f_5 \circ
f_4\circ f_3\circ f_2$ to the inequality above. Using our derivative
bounds, we obtain
\[
m_5\left(m_2m_3r\right)^{1/3} \leq \tilde{f}_4(r) \leq
M_5\left(2M_2M_3r\right)^{1/3},
\]
thus the result holds with $a_1 = m_5(m_2m_3)^{1/3}$ and $a_2 =
M_5(2M_2M_3)^{1/3}$.
\end{proof}

\begin{remark}
  Numerical evidence suggests that Lemma~\ref{lem:confmap} holds with
  $a_1=1$ and $a_2\approx 1.426$.
\end{remark}

We denote by $P$ the measure $P^{\delta=1}$ corresponding to site
percolation on the triangular lattice with unit mesh size.

\begin{lemma} \label{lem:uniform} For all $0<c<c_3-1/3=1/3$ there exists $R_0 > 1$ such that
  for all $R \geq R_0$ and for all $r\leq \frac{1}{2}R$,
\begin{equation} \label{eq:uniform}
\left| P^{\delta=1}(S_r \leftrightarrow S_R) - \phi_{r,R}(r) \right|
\leq \frac{a_1}{10} R^{-c}.
\end{equation}
\end{lemma}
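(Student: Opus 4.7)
The plan is to derive Lemma~\ref{lem:uniform} from Proposition~\ref{prop:uniform_constant} by a single mesh-rescaling. Critical site percolation is invariant under simultaneous rescaling of the lattice and the domain, so
\[
P^{\delta=1}(S_r \leftrightarrow S_R) = P^{\delta=1/R}(S_{r/R} \leftrightarrow S_1).
\]
On the conformal-map side, uniqueness of the normalized Riemann map gives $\phi_{r/R,1}(z)=\phi_{r,R}(Rz)$, since $z\mapsto Rz$ identifies $\Omega_{r/R,1}$ with $\Omega_{r,R}$ and carries the triple $(-1,-r/R,1)$ to $(-R,-r,R)$; specializing at $z=r/R$ yields $\phi_{r,R}(r)=\phi_{r/R,1}(r/R)$. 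Consequently the left-hand side of \eqref{eq:uniform} equals
\[
\left|P^{\delta=1/R}(S_{r/R}\leftrightarrow S_1)-\phi_{r/R,1}(r/R)\right|.
\]

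Next, given $c\in(0,1/3)$, I would pick an auxiliary exponent $c'\in(c+\tfrac13,\tfrac23)$; such a $c'$ exists precisely because $c<1/3$. For $r\in[1,R/2]$ the rescaled parameters satisfy $1/R\le r/R\le 1/2$, which is exactly the hypothesis of Proposition~\ref{prop:uniform_constant}. Applying that proposition with its $\delta$ set to $1/R$ and its $r$ set to $r/R$, and using the form $O(\delta^{c'-1/3})$ of the error, one obtains
\[
\left|P^{\delta=1/R}(S_{r/R}\leftrightarrow S_1)-\phi_{r/R,1}(r/R)\right|\le C(c')\,R^{-(c'-1/3)}.
\]
Since $c'-1/3>c$ by construction, taking $R_0$ large enough so that $C(c')R^{-(c'-1/3)}\le \tfrac{a_1}{10}R^{-c}$ for all $R\ge R_0$ completes this case.

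For the residual range $0\le r<1$, the event $\{S_r\leftrightarrow S_R\}$ on the unit triangular lattice agrees with $\{S_1\leftrightarrow S_R\}$ up to events localized in the unit ball at the origin, whose probability is controlled by Russo--Seymour--Welsh and the one-arm estimate of Proposition~\ref{prop:wedgeonearm}, and is thus absorbed into $O(R^{-c})$. On the analytic side, Lemma~\ref{lem:confmap} together with $\phi_{r,R}(r)\asymp (r/R)^{1/3}$ shows that $\phi_{r,R}(r)-\phi_{1,R}(1)=O(R^{-1/3})$ uniformly in $r\in[0,1]$, which after a short computation collapses into the same $O(R^{-c})$ window.

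The substantive work is not in this proof but in Proposition~\ref{prop:uniform_constant}: the only way this scaling argument produces a bound with constants depending solely on $c$ (rather than degenerating as $r/R\to 0$) is because the uniformity over the inner radius was built into that proposition via the careful half-annulus integration in Subsection~\ref{subsec:uniform_constant}. The constraint $c<c_3-1/3=1/3$ is intrinsic and traces back to the requirement $c'<c_3$ in Proposition~\ref{prop:uniform_constant}, so the range of admissible exponents in Lemma~\ref{lem:uniform} is sharp for this method.
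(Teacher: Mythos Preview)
Your proof is correct and follows the same route as the paper: rescale by $R$ to land inside the hypotheses of Proposition~\ref{prop:uniform_constant}, and exploit the openness of $(0,c_3-1/3)$ (your auxiliary exponent $c'$) to trade the multiplicative constant for a slightly weaker power of $R$. The paper's version is a two-sentence proof conveying exactly this; your treatment of the range $0\le r<1$ is additional care that the paper omits, and in fact is not needed downstream since in the proof of Theorem~\ref{thm:halfarm} all radii satisfy $r\ge R_0>1$.
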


\begin{proof}
  This follows immediately from Proposition~\ref{prop:uniform_constant}, by
  rescaling by a factor of $R$. Note that we have used the openness of
  interval $(0,c_3-1/3)$ to deal with the multiplicative constant in the
  bound given by Proposition~\ref{prop:uniform_constant}.
\end{proof}

\begin{pictures}
\begin{figure}
  \centering
  \subfigure[\label{fig:cm1}Half-annulus $D_1$]{\includegraphics[width=4cm]{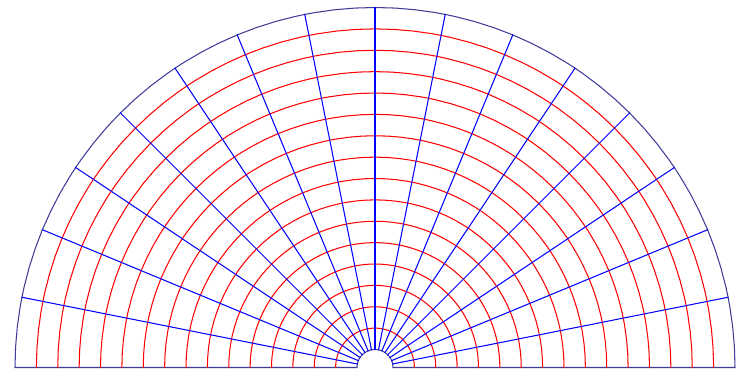}}
  \subfigure[\label{fig:cm2}Half-ellipse $D_2$]{\includegraphics[width=4cm]{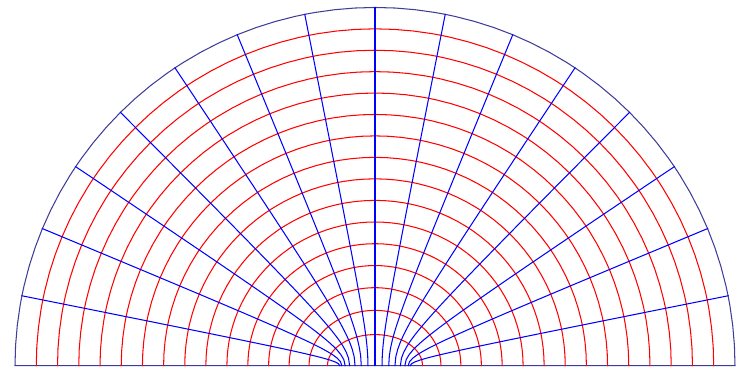}}
  \subfigure[\label{fig:cm3}Half-disk $D_3$]{\includegraphics[width=4cm]{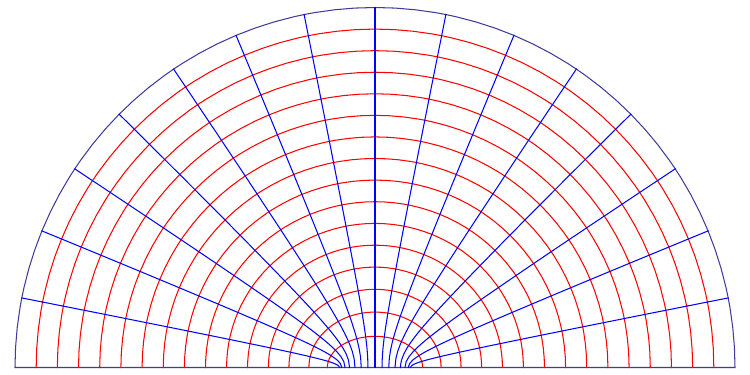}}\\
  \subfigure[\label{fig:cm4}Half-disk $D_4$]{\includegraphics[width=4cm]{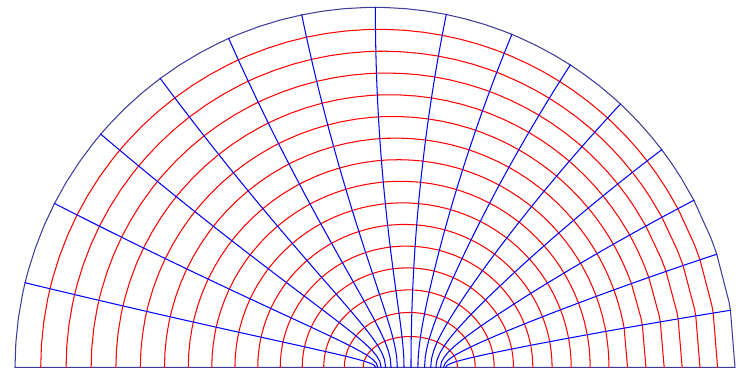}}
  \subfigure[\label{fig:cm5}Sector $D_5$]{\includegraphics[width=4cm]{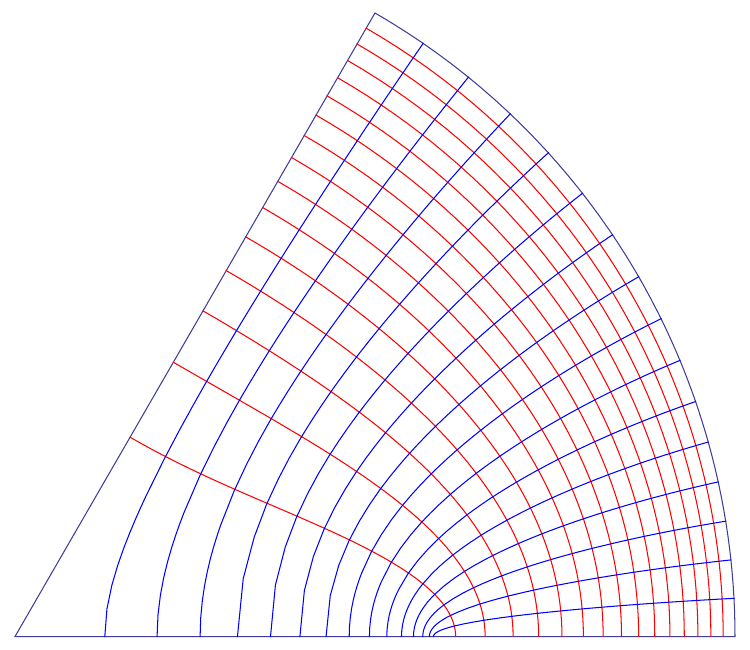}}
  \subfigure[\label{fig:cm6}Equilateral triangle $D_6$]{\includegraphics[width=4cm]{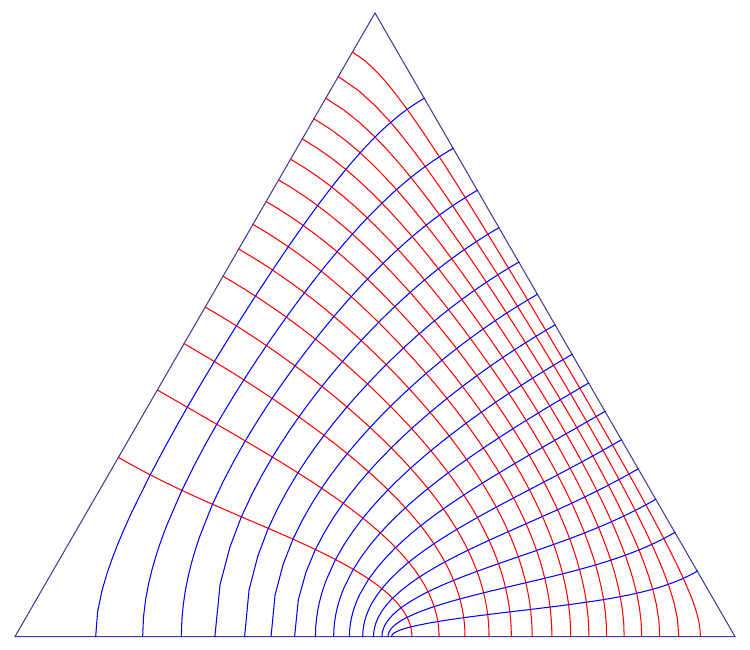}}
  \caption{Panels~(b) through (f) show the images of the half-annulus in
    panel~(a) under successive conformal maps. Composing these maps gives the
    conformal map $\phi_{r,R}$ from the half-annulus to the equilateral
    triangle which sends $-R$, $-r$, and $R$ to $e^{i\pi/3}$, 0, and 1. The
    ratio of outer radius to inner radius is 20 for the half-annulus
    shown. The map from $D_1$ to $D_2$ is a suitable scaling of $z\mapsto
    z+1/z$. The map from $D_2$ to $D_3$ is the restriction of the conformal
    map from an ellipse to the disk. From $D_3$ to $D_4$, a M\"obius map
    moves the image of $-r$ to the origin. The map from $D_4$ to $D_5$ is
    the cube root, and the map from $D_5$ to $D_6$ is the restriction to a
    sector of the Schwarz-Christoffel map from the disk to the regular
    hexagon. } \label{fig:conformalmap}
\end{figure}
\end{pictures}

\begin{proof}[Proof of Theorem~\ref{thm:halfarm}]
  Let $\eps>0$, and define $R_0=e^{\sqrt{\log \log R}}$. We assume that $R$
  is sufficiently large that $R_0$ satisfies the statement of
  Lemma~\ref{lem:uniform}.  Define $\alpha = 1/(1-3c)$ and $n = \lfloor
  \log_{R_0} \log_\alpha R \rfloor$. Let $R_k = R_0^{\alpha^k}$ for $1\leq
  k \leq n-1$, and let $R_n=R$. We first prove the upper bound.  Since an
  open path from $0$ to $S_R$ includes a crossing from $S_{R_k}$ to
  $S_{R_{k+1}}$ for all $0\leq k <n$, we may use Lemma~\ref{lem:confmap},
  Lemma~\ref{lem:uniform}, and independence to compute
\begin{align*}
P(0\leftrightarrow S_R) &\leq \prod_{k=0}^{n-1} P(S_{R_k} \leftrightarrow
S_{R_{k+1}}) \\
&\leq \prod_{k=0}^{n-1} \left[a_2\left(\frac{R_{k+1}}{R_k}\right)^{-1/3} +
  \frac{a_1}{10}R_{k+1}^{-c}\right],
\end{align*}
by (\ref{eq:uniform}). Factoring out the first term in brackets and splitting the
product, we obtain
\begin{align*}
P(0\leftrightarrow S_R) &\leq \prod_{k=0}^{n-1}a_2
\prod_{k=0}^{n-1}\left(\frac{R_{k+1}}{R_k}\right)^{-1/3}\prod_{k=0}^{n-1}
\left[1 + a_1(10a_2)^{-1}R_{k+1}^{1/3-c}R_k^{-1/3}\right] \\
&\leq (a_1/10+a_2)^{n-1} (R/R_0)^{-1/3},
\end{align*}
because the second term in brackets simplifies to $a_1/(10a_2)$ by our
choice of $R_k$. Substituting the value of $n$ gives
\begin{align*}
P(0\leftrightarrow S_R)  &\leq R_0^{1/3} (\log \alpha)^{-\log (a_1/10+a_2)/\log R_0} (\log R)^{\log (a_1/10+a_2) / \log R_0} R^{-1/3} \\
&\leq e^{C\sqrt{\log\log R}} R^{-1/3},
\end{align*}
for some constant $C$ and for sufficiently large $R$, which gives the upper
bound.

For the lower bound (see Figure~\ref{fig:lowerbound}), we define $R'_k =
2R_k$. Define $E_k$ to be the event that there is an open crossing of
$\Omega_{R_k,R_k'}$ from $[R_k,R_k']$ to $[-R_k',-R_k]$. By the
Russo-Seymour-Welsh inequality, this probability is bounded below by a
constant $p$ which does not depend on $k$. Note that there is a path from
the origin to $S_R$ if the following events occur:
\begin{enumerate}
\item there is an open path from the origin to
$S_{R'_0}$,
\item there is an open path from $S_{R_k}$ to $S_{R'_{k+1}}$ for
all $0\leq k< n $, and
\item $E_k$ occurs for all $0\leq k < n$.
\end{enumerate}

Since these events are increasing, we can use the FKG inequality to lower bound
the probability of their intersection by the product of their
probabilities. We obtain
\begin{align*}
P(0\leftrightarrow S_R) &\geq P(0\leftrightarrow S_{R_0}) \prod_{k=0}^{n-1} P(S_{R_k} \leftrightarrow
S_{R'_{k+1}}) \prod_{k=0}^{n-1} P(E_k)\\
&\geq R_0^{-1/2} p^{n-1} \prod_{k=0}^{n-1} \left[a_1\left(\frac{R_{k+1}'}{R_k}\right)^{-1/3} -
  \frac{a_1}{10}(R'_{k+1})^{-c}\right],
\end{align*}
since $P(0\leftrightarrow S_{R_0}) = R_0^{-1/3+o(1)} \gtrsim R_0^{-1/2}$,
by the Cardy-Smirnov theorem. Factoring as before and simplifying, we obtain
\begin{align*}
P(0\leftrightarrow S_R) &\geq R_0^{-1/2} (a_1p)^{n-1}
\prod_{k=0}^{n-1}\left(\frac{R'_{k+1}}{R_k}\right)^{-1/3}\prod_{k=0}^{n-1}
\left[1 - \frac{1}{10}(R'_{k+1})^{1/3-c}R_k^{-1/3}\right]\\
&\geq R_0^{-1/2} 2^{-n/3}(a_1p)^{n-1}
\prod_{k=0}^{n-1}\left(\frac{R_{k+1}}{R_k}\right)^{-1/3}\prod_{k=0}^{n-1}
\left[1 - \frac{2^{1/3-c}}{10}R_{k+1}^{1/3-c}R_k^{-1/3}\right]\\
&\geq R_0^{-1/2} 2^{-n/3}\left[a_1p(1-2^{1/3-c}/10)\right]^{n-1} (R/R_0)^{-1/3} \\
&\geq e^{-C\sqrt{\log \log R}} R^{-1/3},
\end{align*}
for some constant $C>0$ and sufficiently large $R$.
\end{proof}

\begin{pictures}
\begin{figure}
  \centering
  \includegraphics[width=12cm]{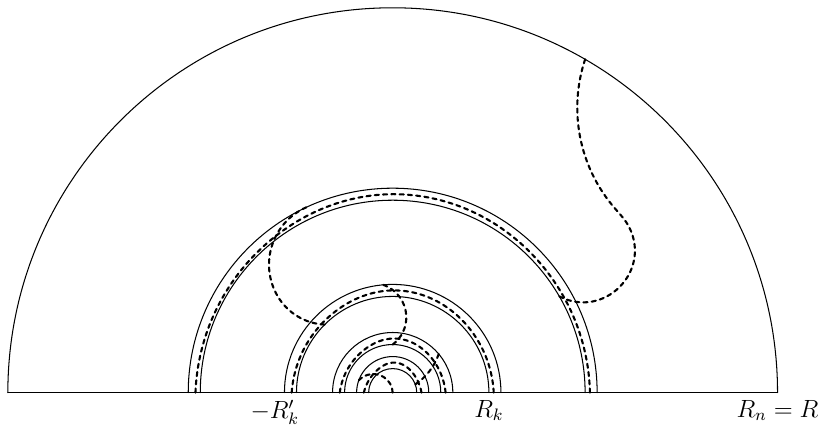}
  \caption{If there are segment-to-segment crossings of each narrow
    half-annulus, crossings from $S_{R_k}$ to $S_{R_k'}$ for each $0\leq k
    \leq n$, and an open path from the origin to $S_{R_0}$, then there is
    an open path from the origin to $S_R$. The figure shown is an image
    under radial logarithmic scaling $(r,\theta)\mapsto (\log
    r,\theta)$.} \label{fig:lowerbound}
\end{figure}
\end{pictures}

\end{halfplane}

\newpage

\bigskip

\filbreak
\begingroup
\small
\parindent=0pt

\vtop{
\hsize=5.3in
\href{mailto:sswatson@math.mit.edu}{\nolinkurl{sswatson@math.mit.edu}} \\ \href{mailto:dana@math.mit.edu}{\nolinkurl{dana@math.mit.edu}} \\
Department of Mathematics\\
Massachusetts Institute of Technology \\
77 Massachusetts Ave \\
Cambridge, MA \\
USA
}

\vspace{3mm}

\vtop{
\hsize=5.3in
\href{mailto:asafnach@ubc.edu}{\nolinkurl{asafnach@math.ubc.edu}} \\
Department of Mathematics \\
University of British Columbia \\
121-1984 Mathematics Rd \\
Vancouver BC, \\
Canada
}

\enlargethispage{1cm}

\endgroup \filbreak

\end{document}